\documentclass{amsart}
\allowdisplaybreaks[4]
\usepackage{graphicx}
\usepackage{color}

\newcommand{\Z}{\mathbf{Z}}
\newcommand{\R}{\mathbf{R}}

\newcommand{\sig}{\sigma}

\newcommand{{\ba}}{\bf a}
\newcommand{\ve}{\varepsilon}
\newcommand{\la}{\lambda}
\newcommand{\La}{\Lambda}
\newcommand{\ga}{\gamma}
\newcommand{\Ga}{\Gamma}
\newcommand{\pa}{\partial}
\newcommand{\ra}{\rightarrow}

\newcommand{\del}{\delta}

\newcommand{\cd}{\cdot}

\newcommand{\De}{\Delta}
\newcommand{\al}{\alpha}

\newcommand{\be}{\begin{equation}}
\newcommand{\ee}{\end{equation}}

\newtheorem{lem}{Lemma}{\bf}{\it}
{\it}{\rm}
\newtheorem{rem}{Remark}{\it}{\rm}
{\it}{\rm}

\newtheorem{theorem}{Theorem}
\newtheorem{proposition}{Proposition}

\numberwithin{theorem}{section}
\numberwithin{lem}{section}
\numberwithin{equation}{section}
\numberwithin{proposition}{section}
\numberwithin{corollary}{section}
\title[Carr-Penrose Model]{On large time behavior and selection principle for a diffusive Carr-Penrose Model }

\author{Joseph G. Conlon, Michael Dabkowski and Jingchen Wu}

\address{(Joseph G. Conlon): University of Michigan\\ Department of Mathematics\\ Ann Arbor,
  MI 48109-1109}
\email{conlon@umich.edu}

\address{(Michael Dabkowski): University of Michigan\\ Department of Mathematics\\ Ann Arbor,
  MI 48109-1109}
\email{mgdabkow@umich.edu}

\address{(Jingchen Wu):  500 9th Avenue, Seattle, WA 98109}

\email{jcwu@amazon.com}

\keywords{nonlinear pde, coarsening}
\subjclass{35F05,  82C70, 82C26}

\begin{document}

\maketitle

\begin{abstract}
This paper is concerned with the study of a diffusive perturbation of the linear LSW model introduced by Carr and Penrose. A main subject of interest is to understand how the presence of diffusion acts as a selection principle, which singles out a particular self-similar solution of the linear LSW model as determining the large time behavior of the diffusive model. A selection principle is rigorously proven for a model which is a semi-classical approximation to the diffusive model. Upper bounds on the rate of coarsening are also obtained for the full diffusive model.
\end{abstract}

\section{Introduction.}
In \cite{cp} Carr and Penrose introduced a linear version of the  Lifschitz-Slyozov-Wagner (LSW)  model \cite{ls,w}. In this model the density function $c_0(x,t), \ x>0,t>0,$ evolves according to the system of equations, 
\begin{eqnarray}
\frac{\pa c_0(x,t)}{\pa t}&=& \frac{\pa}{\pa x}\left[1-\frac{x}{\La_0(t)}\right]c_0(x,t) \ ,  \quad x>0, \label{A1}\\
\int_0^\infty x c_0(x,t) dx&=& 1. \label{B1}
\end{eqnarray}
The parameter $\La_0(t) > 0$ in (\ref{A1}) is determined by the conservation law  (\ref{B1}) and is therefore given by the formula,
\be \label{C1}
\La_0(t) = \int^\infty_0 \ x c_0(x,t)dx \Big/ \int^\infty_0 c_0(x,t) dx.
\ee
One can also see that the derivative of $\La_0(t)$ is given by   
\be \label{D1}
\frac{d\La_0(t)}{dt} \ = \ c_0(0,t)\Big/\left[ \int^\infty_0 c_0(x,t) dx\right]^2 \ ,
\ee
whence $\La_0(\cdot)$ is an increasing function.

The system (\ref{A1}), (\ref{B1}) can be interpreted as an evolution  equation for the probability density function (pdf) of random variables. Thus let us assume that the initial data $c_0(x)\ge 0, \ x>0,$ for (\ref{A1}), (\ref{B1}) satisfies $\int_0^\infty c_0(x) \ dx<\infty$, and let $X_0$ be the non-negative random variable with  pdf $c_0(\cdot)/\int_0^\infty c_0(x) \ dx$. The conservation law (\ref{B1}) implies that the mean $\langle X_0\rangle$ of $X_0$ is finite, and this is the only absolute requirement on the variable $X_0$.  If for $t>0$ the variable $X_t$ has pdf  $c_0(\cdot,t)/\int_0^\infty c(x,t) \ dx$, then (\ref{A1}) with $\La_0(t)=\langle  X_t\rangle$ is an evolution equation for the pdf of $X_t$.  Equation (\ref{D1}) now tells us that $\langle X_t\rangle$ is an increasing function of $t$. 

There is an infinite  one-parameter family of self-similar solutions to (\ref{A1}), (\ref{B1}). Using the normalization $\langle X_0\rangle =1$, the initial data for these solutions are given by
\be \label{E1}
P(X_0>x) \ = \ 
\begin{cases}
   [1-(1-\beta)x]^{\beta/(1-\beta)} \ , \ 0<x<1/(1-\beta),  \quad &\text{ if \ } 0<\beta<1, \\
   e^{-x}  &\text{ if \ } \beta=1, \\
 [1+(\beta-1)x]^{\beta/(1-\beta)} \ , \ 0<x<\infty,  \quad  & \ \text{if \ } \beta>1.
\end{cases}
\ee
The random variable $X_t$ corresponding to the evolution (\ref{A1}), (\ref{B1}) with initial data (\ref{E1}) is then given by
\be \label{F1}
X_t \ = \ \langle X_t\rangle X_0 \ , \quad \frac{d}{dt} \langle X_t\rangle  \ = \beta \ .
\ee
The main result of \cite{cp} (see also \cite{carr}) is that a solution of (\ref{A1}), (\ref{B1}) converges at large time to the self-similar solution with parameter $\beta$, provided the initial data  and the self similar solution of parameter $\beta$ behave in the same way at the end of their supports.  In $\S2$ we give a simple proof of the Carr-Penrose convergence theorem using the {\it beta function} of a random variable introduced in \cite{c2}. 

The large time behavior of the Carr-Penrose (CP) model  is qualitatively similar to the conjectured large time behavior of the LSW model \cite{np1}, provided the initial data  has compact support. In the LSW model there is a one-parameter family of self-similar solutions with parameter $\beta,  \ 0<\beta\le 1$, all of which have compact support. The self-similar solution with parameter $\beta<1$ behaves in the same way towards the end of its support as does the CP self-similar solution with parameter $\beta$.  It  has been conjectured \cite{np1} that a solution of the LSW model converges at large time to the LSW self-similar solution  with parameter $\beta$, provided the initial data  and the self similar solution of parameter $\beta$ behave in the same way at the end of their supports.  A weak version of this result has been proven in \cite{cn}. 

It was already claimed in \cite{ls,w} that the only physically relevant self-similar LSW solution is the one with parameter $\beta=1$. This  has been explained in a heuristic way in several papers \cite{meer, rz,vel},  by considering a model in which a second order diffusion term is added to the first order LSW equation. It is then argued that  diffusion acts as a {\it selection principle}, which singles out  the $\beta=1$ self-similar solution as giving the large time behavior.    In this paper we  study a diffusive version of the Carr-Penrose model, with the goal of understanding how a selection principle for the $\beta=1$ self-similar solution (\ref{E1}) operates.  

 In our diffusive CP model we simply add a second order diffusion term with coefficient $\ve/2>0$ to the CP equation (\ref{A1}). Then  the density function  $c_\ve(x,t)$ evolves according to a linear diffusion equation, subject to the linear mass conservation constraint as follows:
\begin{eqnarray}
\frac{\pa c_\ve(x,t)}{\pa t}&=& \frac{\pa}{\pa x}\left[1-\frac{x}{\La_\ve(t)}\right]c_\ve(x,t)+\frac{\ve}{2}\frac{\pa^2 c_\ve(x,t)}{\pa x^2},  \quad x>0, \label{G1}\\
\int_0^\infty x c_\ve(x,t) dx&=& 1. \label{H1}
\end{eqnarray}
We also need to impose a boundary condition at $x=0$ to ensure that (\ref{G1}), (\ref{H1}) with given initial data $c_\ve(x,0) = c_0(x) \ge 0, \ x > 0$, satisfying the constraint (\ref{H1})  has a unique solution.  We impose the Dirichlet boundary condition $c_\ve(0,t)=0, \ t>0,$ because in this case  the parameter $\La_\ve(t) > 0$ in (\ref{G1}) is  given by the formula
\be \label{I1}
\La_\ve(t) = \int^\infty_0 \ x c_\ve(x,t)dx \Big/ \int^\infty_0 c_\ve(x,t) dx.
\ee
Hence the diffusive CP model is an evolution equation for the pdf $c_\ve(\cdot,t)/\int_0^\infty c_\ve(x,t) \ dx$ of a random variable $X_{\ve,t}$ and $\La_\ve(t)=\langle X_{\ve,t}\rangle$. Furthermore, it is easy to see from (\ref{G1}), (\ref{H1}) that
\be \label{J1}
\frac{d\La_\ve(t)}{dt} \ = \ \frac{\ve}{2}\frac{\pa c_\ve(0,t)}{\pa x}\Big/\left[ \int^\infty_0 c_\ve(x,t) dx\right]^2.
\ee
It follows from (\ref{J1}) and the maximum principle \cite{pw} applied to (\ref{G1}) that the function $t\ra\La_\ve(t)$ is increasing.

In \cite{smereka} Smereka studied a discretized CP model and rigorously established a selection principle  for arbitrary initial data with finite support. He also proved that the {\it rate of convergence} to the  $\beta=1$ self-similar solution (\ref{E1}) is {\it logarithmic} in time.  Since discretization of a first order  PDE introduces an effective diffusion, one can just as well apply the discretization algorithm of \cite{smereka} to (\ref{G1}).  In the discretized model time $t$  is left continuous and the $x$ discretization $\De x$ is required to satisfy the condition $\ve=2\De x$.  In \cite{smereka} the large time behavior of solutions to this discretized model is  studied by using a Fourier method. The Fourier method cannot be implemented if the assumption $\ve=2\De x$ is dropped.  In $\S2$ we show that the discretized model is a $2$ dimensional dynamical system if and only if $\ve=2\De x$,  and that this dynamics is associated with the unique $2$ dimensional non-Abelian Lie algebra.  This places the Smereka discrete model in the same category as  the  CP model  (\ref{A1}), (\ref{B1}) and the quadratic model introduced in \cite{cn}, which have also been shown to be $2$ dimensional  with $2$ dimensional non-Abelian Lie algebra. 

In $\S2$ we begin by studying the CP model. If the initial data for (\ref{A1}), (\ref{B1}) is Gaussian, then it follows from \cite{cp} that the solution converges at large time to the $\beta=1$ self-similar solution (\ref{E1}). We prove that the rate of convergence is logarithmic in time as in the Smereka model. Next we consider how to extend this result to the diffusive CP model  (\ref{G1}), (\ref{H1}). We introduce a family of models with parameter $\nu, \ 0\le \nu\le 1,$  which interpolate between the  diffusive CP model  (\ref{G1}), (\ref{H1}) and a simpler model for which we can prove a selection principle. Each of these models is an evolution equation for the pdf of a non-negative random variable $X_t, \ t\ge 0,$ in which the function $t\ra \langle X_t\rangle$ is increasing. The evolution PDE is however now {\it non-linear} of viscous Burgers' type \cite{hopf}  with viscosity coefficient proportional to $\nu$. The $\nu=1$ model is identical to the diffusive CP model (\ref{G1}), (\ref{H1}), but the $\nu=0$ model is not the same as the  CP model  (\ref{A1}), (\ref{B1}). We shall refer to it as the {\it inviscid} CP model since its evolution PDE is an inviscid Burgers' equation \cite{smoller}.  Similarly we refer to the model with $0<\nu\le1$ as the {\it viscous} CP model with viscosity $\nu$.  

In $\S3$ we study the large time behavior of the inviscid CP model and obtain the following theorem:
\begin{theorem} Suppose the initial data for the inviscid CP model corresponds to the non-negative random variable $X_0$, and assume that $X_0$ satisfies
\be \label{K1}
\ve< \langle X_0 \rangle, \  \|X_0\|_\infty<\infty, \quad x\ra E[X_0-x \ | \ X_0>x] \ {\rm is \ decreasing \ for \ }  0\le x<\|X_0\|_\infty \ . 
\ee
Then $\lim_{t\ra\infty} \langle X_t\rangle/t=1$.  If in addition the function $x\ra E[X_0-x \ | \ X_0>x] $ is $C^1$ and convex for $x$ close to $\|X_0\|_\infty$  with 
\be \label{O1}
\liminf_{x\ra\|X_0\|_\infty} \frac{\pa}{\pa x} \  \frac{1}{E[X_0-x \ | \ X_0>x] } \ > \ 0 \ ,
\ee
 then there exists $C,T>0$ such that
\be \label{L1}
1-\frac{C}{\log t} \ \le \frac{d}{dt} \langle X_t\rangle \ \le \ 1 \quad {\rm for \ } t\ge T \ .
\ee
\end{theorem}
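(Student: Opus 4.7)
The plan is to exploit the inviscid-Burgers structure of the evolution PDE for the inviscid CP model, as advertised in the introduction. Concretely, I would work with a monotone profile function, most naturally the conditional excess $h(x,t) = E[X_t - x \mid X_t > x]$ (the mean residual life), whose value at $x=0$ equals $\langle X_t\rangle$ and whose boundary behavior governs $d\langle X_t\rangle/dt$ through a mass-conservation identity analogous to (\ref{D1})--(\ref{J1}). The first step is to verify that the three conditions in (\ref{K1}) guarantee that $h(\cdot,0)$ is bounded, monotone on $[0,\|X_0\|_\infty)$, and compatible with the boundary condition at $x=0$; in particular $\ve<\langle X_0\rangle$ supplies the room required to initiate the evolution, and the monotonicity translates into non-crossing of characteristics for all $t>0$.

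With well-posedness in hand, I would solve the resulting first-order quasilinear equation by the method of characteristics. Each characteristic carries a fixed value of $h$, so the boundary value $h(0,t)$ is determined by the characteristic currently passing through the origin, which originates from a point $x_0 = x_0(t)$ that approaches $\|X_0\|_\infty$ as $t\to\infty$. Coupling this with the ODE for $\langle X_t\rangle$ yields $d\langle X_t\rangle/dt \to 1$ and hence $\langle X_t\rangle/t \to 1$ by integration, giving the first conclusion of the theorem. The upper bound $d\langle X_t\rangle/dt \le 1$ in (\ref{L1}) should follow in parallel from a maximum principle along characteristics, which prevents the boundary data at $x=0$ from exceeding the value associated with the exponential ($\beta=1$) self-similar profile.

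For the lower bound in (\ref{L1}), the convexity hypothesis (\ref{O1}) becomes essential. I would invert the map $x_0 \mapsto t(x_0)$, where $t(x_0)$ is the time at which the characteristic starting at $x_0$ reaches the origin. Near $x_0 = \|X_0\|_\infty$, the liminf assumption on $\pa_x(1/h)$ in (\ref{O1}) yields a linear lower bound on $1/h(x_0,0)$, and integrating the characteristic ODE gives $t(x_0) \sim -C\log(\|X_0\|_\infty - x_0)$ for some $C>0$. Inverting produces $\|X_0\|_\infty - x_0(t) = O(e^{-t/C})$, and translating this back into an estimate on $h(0,t)$ via (\ref{O1}) yields the $1/\log t$ correction asserted. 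The main obstacle is that the characteristic velocity is coupled to $\La_\ve(t) = \langle X_t\rangle$ through the drift $1 - x/\La_\ve(t)$, so a self-consistent Gronwall-type bootstrap is required to close the estimate; moreover, one must control higher-order corrections near the edge of support using only $C^1$ regularity and convexity, and the explicit liminf in (\ref{O1}) is tailored precisely to make this possible.
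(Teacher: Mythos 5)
Your broad picture (reduce to a scalar conservation law for the mean residual life, track the characteristic reaching $x=0$, derive the boundary rate from how fast the foot of that characteristic approaches $\|X_0\|_\infty$) does match the paper's high-level setup, which works with $v_\ve(x,t)=E[X_t-x\mid X_t>x]^{-1}$ and an implicit characteristic map. However, there are several substantive gaps in the way you propose to carry this out.

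First, the characteristics here are not a standard first-order object. The drift $\left[x/\La_\ve(t)-1+\ve v_\ve(x,t)\right]$ couples to both the unknown functional $\La_\ve(t)$ and to $v_\ve$ itself, so the ``starting point'' $z=z(x,t)$ is defined only implicitly, through equation (\ref{Y6}), $z+\ve\,\sigma^2_A(t)m_{1,A}(t)^{-2}\,v_0(z)=F_A(x,t)$, and the fixed-point/contraction analysis (Lemmas 3.1--3.2) is genuinely needed for well-posedness and for the monotonicity facts you take for granted (e.g.\ that $v_\ve(0,\cdot)$ decreases, that the characteristics do not cross globally in time). Your proposal treats this as a routine method-of-characteristics step.

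Second, the role of $\ve>0$ is essential and is not addressed at all. For $\ve=0$ the same conditions (\ref{K1}) give convergence of the CP model to a $\beta<1$ self-similar solution, so $\langle X_t\rangle/t\to\beta<1$, not $1$. The conclusion $\langle X_t\rangle/t\to1$ and the estimate (\ref{L1}) hold precisely because the $\ve v_\ve$ Burgers term in (\ref{Y6}) overwhelms $v_0(z)$ near the edge: after any positive time the effective support becomes all of $\R^+$ with $v_\ve(x,t)\sim x/(\ve\sigma^2_A(t))$ (Lemma 3.6, inequality (\ref{CW6})). Your proof sketch does not distinguish $\ve=0$ from $\ve>0$ anywhere, so it cannot be correct as stated.

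Third, and most importantly, the rate computation is wrong. You claim $t(x_0)\sim -C\log(\|X_0\|_\infty-x_0)$, equivalently $\|X_0\|_\infty-x_0(t)=O(e^{-t/C})$. That is off by an enormous amount. Already for the pure CP model with $\beta_0<1$ self-similar data one has $x_\infty-z(t)\propto(1+\beta_0 t)^{1-1/\beta_0}$, a power law; and for the inviscid model with $\La_\ve(t)\sim t$, solving (\ref{Y6}) at $x=0$ gives $v_0(z(t))\sim (\log t)/\ve$, i.e.\ the characteristic foot approaches the edge only at rate $x_\infty-z(t)\sim 1/\log t$. In particular, an exponential-in-$t$ approach is inconsistent with a $1/\log t$ correction: if $z(t)$ really reached $x_\infty$ exponentially fast, the beta function $\beta_{X_t}(0)$ would converge to $1$ exponentially fast too, not logarithmically. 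The actual mechanism in the paper (Lemmas 3.5--3.6) is a doubling-time argument: one shows that the functional $Q(t)=v_\ve(0,t)^2/\partial_x v_\ve(0,t)=[1-\beta_{X_t}(0)]^{-1}$ increases by a fixed amount $\delta\gamma(\delta)$ every time $\La_\ve$ grows by a bounded factor (which is what the convexity hypothesis (\ref{O1}) is ultimately exploited for, via condition (\ref{BQ6}) on the evolved data), and that these doubling events are geometrically spaced in time; hence $Q(t)\gtrsim\log t$ and $1-d\La_\ve/dt\lesssim 1/\log t$. Your proposal never identifies an increasing functional of this kind, and the bootstrap you allude to cannot be made to produce a $1/\log t$ rate starting from an exponential characteristic estimate. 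This is the central missing idea.

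Finally, the upper bound $d\La_\ve/dt\le 1$ is indeed simple, but it comes not from a maximum principle ``along characteristics'' but from the pointwise identity (\ref{CY6}), $d\La_\ve/dt=1-[1-\ve v_\ve(0,t)]v_\ve(0,t)^{-2}\partial_x v_\ve(0,t)$, together with $v_\ve(0,t)\le\ve^{-1}$ and $\partial_x v_\ve\ge 0$; you should state this rather than appeal vaguely to the self-similar profile.
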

Observe that a $\beta<1$ self-similar solution (\ref{E1}) of the CP model has $\|X_0\|_\infty=1/(1-\beta)$ and $E[X_0-x \ | \ X_0>x]=1-(1-\beta)x, \ 0\le x<\|X_0\|_\infty$. Hence the $\beta<1$ self-similar solution satisfies all the conditions of Theorem 1.1 provided $\ve<1$.  The condition $\ve<1$ is not crucial since for any $\ve>0$ one can rescale the initial data so that $\ve<\langle X_0\rangle$. Therefore Theorem 1.1 proves a selection principle for the $\beta=1$ self-similar solution (\ref{E1}) and establishes a rate of convergence which is logarithmic in time. 

The remainder of the present paper is devoted to the study of the diffusive CP model (\ref{G1}), (\ref{H1}). Since existence and uniqueness has already been proven for a diffusive version of the LSW model \cite{c1} we do not revisit this issue, but concentrate on understanding large time behavior. In $\S7$ we obtain the following:
\begin{theorem} Suppose the initial data for the diffusive CP model (\ref{G1}), (\ref{H1}) corresponds to the non-negative random variable $X_0$ with integrable pdf. Then $\lim_{t\ra\infty} \langle X_t\rangle=\infty$.  If in addition the function $ x\ra E[X_0-x \ | \ X_0>x], \   0\le x<\|X_0\|_\infty,$ is decreasing, then 
there are constants $C,T>0$ such that
\be \label{M1}
0 \ \le \frac{d}{dt} \langle X_t\rangle \ \le \ C \quad {\rm for \ } t\ge T \ .
\ee
\end{theorem}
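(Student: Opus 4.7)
I would prove the two assertions separately.

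For the divergence $\Lambda_\ve(t)\to\infty$: the nonnegativity of $c_\ve$ together with $c_\ve(0,t)=0$ and the parabolic Hopf lemma give $\partial_x c_\ve(0,t)\ge 0$, so (\ref{J1}) implies $\dot\Lambda_\ve\ge 0$ and $\Lambda_\ve$ is nondecreasing. I would argue by contradiction: suppose $\Lambda_\ve(t)\to\Lambda_\infty<\infty$. Integrating (\ref{J1}) in time shows $\partial_x c_\ve(0,\cdot)\in L^1(0,\infty)$, so there is a sequence $t_n\to\infty$ along which $\partial_x c_\ve(0,t_n)\to 0$. Using parabolic regularity estimates, the conservation (\ref{H1}), and the uniform lower bound $\int c_\ve\,dx=1/\Lambda_\ve\ge 1/\Lambda_\infty$, I would extract a subsequential limit $\bar c$ solving
\[
\partial_x[(1-x/\Lambda_\infty)\bar c]+\tfrac{\ve}{2}\bar c''=0,\quad \bar c(0)=0,\ \bar c'(0)=0,\ \int_0^\infty\bar c\,dx=1/\Lambda_\infty>0.
\]
This second-order linear ODE with zero Cauchy data at $x=0$ admits only $\bar c\equiv 0$, contradicting $\int\bar c>0$.

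For the bound $\dot\Lambda_\ve\le C$ under the DMRL hypothesis, I would introduce $h(x,t)=\int_x^\infty c_\ve\,dz$, $y(x,t)=\int_x^\infty(z-x)c_\ve\,dz$, and $v(x,t)=y/h=E[X_t-x\mid X_t>x]$. Using $y'=-h$, $h'=-c_\ve$, the boundary condition $c_\ve(0,t)=0$, and (\ref{H1}), a direct computation gives $v(0,t)=\Lambda_\ve(t)$, $\partial_x v(0,t)=-1$, and $\partial_{xx}v(0,t)=\Lambda_\ve(t)^2\,\partial_x c_\ve(0,t)$. Combined with (\ref{J1}) this yields the identity $\dot\Lambda_\ve(t)=\tfrac{\ve}{2}\,\partial_{xx}v(0,t)$, so it suffices to bound $\partial_{xx}v(0,t)$ uniformly. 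From (\ref{G1}) I would derive the PDE $\partial_t v=v/\Lambda_\ve+\alpha\,\partial_x v+\tfrac{\ve}{2}\partial_{xx}v$ with $\alpha=(1-x/\Lambda_\ve)-\ve(1+\partial_x v)/v$, and then apply the parabolic maximum principle to $\partial_x v$. At a hypothetical first interior extremum where $\partial_x v=0$ or $\partial_x v=-1$, the nonlinear source term $\ve(1+\partial_x v)(\partial_x v)^2/v^2$ arising in the PDE for $\partial_x v$ vanishes, showing that the interval $-1\le\partial_x v(x,t)\le 0$ is invariant in time: the upper bound is the propagated DMRL, and the lower bound is automatic from the identity $\partial_x v=-1+yc_\ve/h^2\ge -1$.

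With DMRL propagated, $v(\cdot,t)$ decreases from $\Lambda_\ve(t)$ to $0$ on $[0,\infty)$ with slope $-1$ at the origin. Coupling this monotonicity with the identity $\int_0^\infty h\,dx=1$ (equivalent to (\ref{H1})) and the divergence $\Lambda_\ve(t)\to\infty$ from the first part, I would establish the uniform bound $\partial_{xx}v(0,t)\le C$ for $t\ge T$, giving $\dot\Lambda_\ve(t)\le C\ve/2$ via the key identity. The hardest step will be this curvature bound: DMRL alone does not control $\partial_{xx}v(0,t)$, so the argument must exploit the PDE structure of $v$, the global constraint (\ref{H1}), and the asymptotic $\Lambda_\ve\to\infty$ to estimate the boundary-layer contribution of thickness $O(\ve)$ to $\partial_{xx}v(0,t)$ in an essentially self-similar fashion.
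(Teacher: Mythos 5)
Your reduction of (\ref{M1}) to the boundary identity $\dot\La_\ve(t)=\tfrac{\ve}{2}\partial_{xx}v(0,t)$ with $v=h_\ve/w_\ve$ is correct (it repackages (\ref{J1}) using $h_\ve(0,t)=1$ and $w_\ve(0,t)=1/\La_\ve(t)$), and your maximum--principle propagation of $-1\le\partial_x v\le 0$ is in substance the paper's propagation of log--concavity of $h_\ve$ via Korevaar's method, since $v$ is the reciprocal of the paper's $v_\ve=w_\ve/h_\ve$. The problem is that you explicitly leave unproved the one step that carries all the weight: the uniform curvature bound $\partial_{xx}v(0,t)\le C$. DMRL gives $\beta_{X_t}(x)\le 1$ for $x>0$, but the Dirichlet condition $c_\ve(0,t)=0$ makes $\beta_{X_t}(0)=0/0$; what must be shown is that $c_\ve$ is genuinely linear across the $O(\ve)$ boundary layer, i.e.\ that $\partial_x c_\ve(0,t)$ is of order $1/\ve$ and not larger. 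Your appeal to ``the PDE structure, (\ref{H1}), and $\La_\ve\to\infty$, in an essentially self--similar fashion'' is a description of the difficulty, not an argument. The paper's proof of this step is the two--sided semiclassical comparison of the Dirichlet Green's function $G_{\ve,D}$ with the free Gaussian $G_\ve$ in the boundary layer (Proposition~5.1 and Lemmas~5.1--5.2, a generalized Brownian--bridge analysis), which yields the linearity estimate $c_\ve(\la\ve,1)\le C\la\,c_\ve(\ve,1)$ of Lemma~7.2; that estimate is then combined with log--concavity (which bounds $\beta_{X_1}(\ve)\le 1$), the crude inequality $h_\ve(\ve,1)\ge h_\ve(0,1)/12$, and the $\La$--rescaling of Lemma~7.3. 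Without something playing the role of Lemmas~5.1--5.2 and 7.2, the proposal is a correct reduction but not a proof.

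Your argument for $\La_\ve(t)\to\infty$ also has a gap: choosing $t_n$ with $\partial_x c_\ve(0,t_n)\to 0$ gives $\dot\La_\ve(t_n)\to 0$, not $\partial_t c_\ve(\cdot,t_n)\to 0$, so the compactness limit $\bar c$ is the time--zero slice of a time--dependent parabolic solution, not a steady state, and the zero--Cauchy--data rigidity you invoke does not apply as written. One can repair this by translating time backward and using the parabolic Hopf lemma on the limit, but that is a different and longer route than the paper's Lemma~7.1, which simply observes that bounded $\La_\ve$ forces $\sig_{1/\La_\ve}^2(t)\to\infty$, whence by (\ref{K2}) $\int_0^{2\La_\infty}c_\ve(\cdot,t)\to 0$, contradicting (\ref{P5}) together with (\ref{H1}). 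Finally, the DMRL propagation itself needs justification at $x=\infty$ and for non--smooth data; the paper supplies this with the approximation scheme of Lemma~7.4 and the Gaussian tail control of Lemma~7.5, which you do not address.
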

To establish the upper bound in (\ref{M1}) requires some delicate semi-classical analysis of the ratio of the Dirichlet Green's function for (\ref{G1}) on the half line $x>0$ to the whole line Green's function. We carry this out in $\S5$ by observing that the ratio of Green's functions is a probability for a generalized Brownian-bridge process, and obtaining a representation of the bridge process in terms of Brownian motion. In PDE terms this amounts to a boundary layer analysis of the solution to (\ref{G1}).  To see why this is the case,  observe that one can always rescale $\langle X_0\rangle$ to be equal to $1$ in both the CP and diffusive CP models. Since the CP model is dilation invariant, the evolution PDE (\ref{A1}) remains the same. However for the diffusive CP model the diffusion coefficient in (\ref{G1})  changes from $\ve$ to $\ve/\langle X_0\rangle$. Since $\lim_{t\ra\infty} \langle X_t\rangle=\infty$ in the diffusive CP model, an analysis of large time behavior must therefore involve an analysis of solutions to (\ref{G1}), (\ref{H1}) as $\ve \ra 0$. 

The simplest problem to understand concerning $\ve\ra 0$ behavior of the diffusive CP model is the problem of proving convergence to the solution of the CP model (\ref{A1}), (\ref{B1}) over some fixed time interval $0\le t\le T$. Thus we assume that the CP and diffusive CP models have the same initial data corresponding to a random variable $X_0$. If $X_t, \ t>0,$ is the random variable corresponding to the solution of (\ref{A1}), (\ref{B1}) and $X_{\ve,t}, \ t>0,$ the random variable corresponding to the solution of (\ref{G1}), (\ref{H1}) then $X_{\ve,t}$ converges in distribution as $\ve\ra 0$ to $X_t$, uniformly in the interval $0\le t\le T$.  Boundary layer analysis becomes necessary in proving the convergence of the diffusive coarsening rate (\ref{J1}) as $\ve\ra 0$ to the CP coarsening rate (\ref{D1}). In the diffusive model there exists a boundary layer with length of order $\ve$ so that $c_\ve(x,t)\simeq c_0(x,t)$ for $x/\ve>>1$. Since $c_\ve(0,t)=0$ one has that $\pa c_\ve(0,t)/\pa x\simeq 1/\ve$, whence the RHS of (\ref{J1}) remains bounded above $0$ as $\ve\ra 0$, and in fact converges to the RHS of (\ref{D1}). 

In $\S6$ we prove using the estimates of $\S5$ the convergence results over a finite time interval  described in the previous paragraph. Analogous results for the diffusive LSW model have already been proven in \cite{c1}.  However the semi-classical estimates obtained in \cite{c1} to prove convergence are not strong enough to prove a uniform in time upper bound on the rate of coarsening as given in (\ref{M1}). Although there is a formal analogy between the problem of understanding large time behavior of the diffusive CP model and the problem of $\ve\ra 0$ convergence of the diffusive CP model over a fixed time interval, the former problem is considerably more difficult than the latter.  

\vspace{.1in}

\section{The Carr-Penrose Model and Extensions}
The analysis of the CP model  \cite{cp} is based on the fact that the characteristics for the first order PDE (\ref{A1}) can be easily computed. Thus let $b:\R\times\R^+\ra\R$ be given by $b(y,s)=A(s)y-1, \ y\in\R, \ s\ge 0$, where $A:\R^+\ra\R^+$ is a continuous non-negative function. We define the mapping $F_A:\R\times\R^+\ra\R$ by setting
\be \label{A7}
F_A(x,t)=y(0), \quad {\rm where \ } \frac{dy(s)}{ds}=b(y(s),s), \ 0\le s\le t, \quad y(t)=x.
\ee
From (\ref{A7}) we see that the function $F_A$ is given by the formula
\begin{multline}  \label{B7}
F_A(x,t)= \frac{x+m_{2,A}(t)}{m_{1,A}(t)}  \ , \quad {\rm where \ } \\
m_{1,A}(t)=\exp\left[\int_0^tA(s) \ ds\right] \ , \quad 
m_{2,A}(t)=\int_0^t \exp\left[\int_s^tA(s') \ ds'\right] \ ds \ .
\end{multline} 
If we let $w_0:\R^+\times\R^+\ra\R^+$ be the function
\be \label{C7}
w_0(x,t) \ = \ \int_x^\infty c_0(x',t) \ dx' \ ,  \quad x,t\ge 0,
\ee
where $c_0(\cdot,\cdot)$ is the solution to (\ref{A1}), then from the method of characteristics we have that
\be \label{D7}
w_0(x,t) \ = \ w_0(F_{1/\La_0}(x,t), 0) \ , \quad x,t\ge 0. 
\ee
The conservation law (\ref{B1}) can also be expressed in terms of $w_0$ as
\be \label{E7}
\int_0^\infty w_0(x,t) \ dx \ = \ \int_0^\infty w_0(F_{1/\La_0}(x,t), 0) \ dx \ = \ 1.
\ee
Observe now that the functions $m_{1,A}, \ m_{2,A}$ of (\ref{B7}) are related by the differential equation
\be \label{F7}
\frac{d}{dt}\left[\frac{m_{2,A}(t)}{m_{1,A}(t)}\right] \ = \ \frac{1}{m_{1,A}(t)} \ .
\ee
It follows from (\ref{E7}), (\ref{F7}) that if we define variables $[u(t),v(t)],   \ t\ge 0,$ by
\be \label{G7}
u(t) \ = \ \frac{1}{m_{1,1/\La_0}(t)} \ ,  \quad v(t) \ = \ \frac{m_{2,1/\La_0}(t)}{m_{1,1/\La_0}(t)} \ ,
\ee
then the CP model (\ref{A1}), (\ref{B1}) with given initial data $c_0(\cdot,0)$  is equivalent to the $2$ dimensional dynamical system
\be \label{H7}
\frac{dv(t)}{dt} \ = u(t) \ , \quad \frac{d}{dt} \int_0^\infty w_0\left(u(t)x+v(t),0\right) \ dx \ = \ 0.
\ee 
Note however that the {\it dynamical law} for the $2$ dimensional evolution depends on the initial data for (\ref{A1}), (\ref{B1}), whereas the initial condition is always $u(0)=1, \ v(0)=0$.  

We can understand the $2$ dimensionality of the CP model and relate it to some other models of coarsening by using some elementary Lie algebra theory.  Thus observe that for operators $\mathcal{A}_0,\mathcal{B}_0$ defined by  
\be \label{I7}
\mathcal{A}_0 \ = \ \frac{d}{dx} \ , \quad \mathcal{B}_0=\frac{d}{dx} x \ , \quad {\rm then \ } \mathcal{A}_0\mathcal{B}_0-\mathcal{B}_0\mathcal{A}_0 \ = \ \mathcal{A}_0 \ .
\ee
The initial value problem  (\ref{A1}) can be written in operator notation as
\be \label{J7}
\frac{\pa c_0(\cdot,t)}{dt} \ = \ \left[\mathcal{A}_0-\frac{\mathcal{B}_0}{\La_0(t)}\right] c_0(\cdot, t)  \  {\rm for \ } t>0, \quad c_0(\cdot,0) \ = \ {\rm given} . 
\ee
It follows from (\ref{I7}) that the Lie Algebra generated by $\mathcal{A}_0,\mathcal{B}_0$ is the unique two dimensional non-Abelian Lie algebra. The corresponding
 $2$ dimensional Lie group is the affine group of the line (see Chapter  4 of \cite{still}). That is the Lie group consists of all transformations $z\ra az+b, \ z\in\R,$ with $a>0, \ b\in\R$. The solutions of equation (\ref{J7}) are a flow on this group. Hence solutions of (\ref{J7}) for all possible functions $\La_0(\cdot)$ lie on a two dimensional manifold. 

Next we consider the discretized version of the CP model studied by Smereka \cite{smereka}.  Letting  $\De x$ denote space discretization, then a standard discretization of (\ref{G1}) with Dirichlet boundary condition is given by
\begin{multline} \label{K7}
\frac{\pa c_\ve(x,t)}{\pa t}+ \frac{J_\ve(x,t)-J_\ve(x-\De x,t)}{\De x} \\
= \ \frac{\ve}{2}\frac{c_\ve(x+\De x,t)+c_\ve(x-\De x,t)-2c_\ve(x,t)}{(\De x)^2} \ , \quad x=(n+1)\De x,  \ \ n=0,1,2,..,
\end{multline}
where
\be \label{L7}
J_\ve(x,t) \ = \ \left[\frac{x}{\La_\ve(t)}-1\right]c_\ve(x,t) \ , \quad c_\ve(0,t) \ = \ 0. 
\ee
The backward difference approximation for the derivative of $J_\ve(x,t)$ is chosen in (\ref{K7}) to ensure stability of the numerical scheme for large $x$. 
Let $D,D^*$ be the discrete derivative operators acting on functions $u:(\De x)\Z\ra\R$ defined by
\be \label{M7}
Du(x) \ = \ \frac{u(x+\De x)-u(x)}{\De x} \ ,  \quad D^*u(x) \ = \ \frac{u(x-\De x)-u(x)}{\De x} \ .
\ee
Then using the notation of (\ref{M7}) we can rewrite (\ref{K7})  as
\be \label{N7}
\frac{\pa c_\ve(x,t)}{\pa t}-D^*J_\ve(x,t) \ = \ \frac{\ve}{2\De x} \left[ (D+D^*) c_\ve(x,t)\right] \ .
\ee
Observe that for operators $\mathcal{A}_{\De x},\mathcal{B}_{\De x}$ defined by  
\be \label{O7}
\mathcal{A}_{\De x} \ = \ D \ , \quad \mathcal{B}_{\De x}= -D^*x \ , \quad {\rm then \ } \mathcal{A}_{\De x}\mathcal{B}_{\De x}-\mathcal{B}_{\De x}\mathcal{A}_{\De x} \ = \ \mathcal{A}_{\De x} \ .
\ee
Choosing $\ve= 2\De x$ in (\ref{N7}), we see that the equation can be expressed in terms of   $\mathcal{A}_{\De x},\mathcal{B}_{\De x}$ as
\be \label{P7}
\frac{\pa c_\ve(\cdot,t)}{\pa t} \ = \ \left[\mathcal{A}_{\De x}-\frac{\mathcal{B}_{\De x}}{\La_\ve(t)}\right] c_\ve(\cdot,t) \ .
\ee

Comparing (\ref{I7}), (\ref{J7}) to (\ref{O7}), (\ref{P7}),  we see that  we can obtain a representation for the solution to (\ref{O7}), (\ref{P7}) by using the fact that the solution to (\ref{I7}), (\ref{J7}) is given by  (\ref{D7}). To see this we use the fact that for $\mathcal{A}_0,\mathcal{B}_0$ as in (\ref{I7}) then
\be \label{Q7}
e^{\mathcal{A}_0s} f(x) \ = \ f(x+s) \ , \quad e^{\mathcal{B}_0s}f(x) \ = \ e^sf(e^s x) \ , \quad x,s\in\R.
\ee
From (\ref{D7}), (\ref{G7}), (\ref{Q7}) it follows that the solution to (\ref{I7}), (\ref{J7}) is given by
\be \label{R7}
c_0(\cdot,t) \ = \ u(t)^{\mathcal{B}_0} e^{\mathcal{A}_0v(t)} c_0(\cdot,0) \ ,
\ee
where $u(t),  v(t)$ are given by (\ref{G7}).  Hence the solution to (\ref{O7}), (\ref{P7}) is given by
\be \label{S7}
c_\ve(\cdot,t) \ = \ u(t)^{\mathcal{B}_{\De x}} e^{\mathcal{A}_{\De x}v(t)} c_\ve(\cdot,0) \ ,
\ee
where $\mathcal{A}_{\De x},\mathcal{B}_{\De x}$ are given by (\ref{O7}), and  $u(t),  v(t)$ are given by (\ref{G7}) with $\La_\ve$ in place of $\La_0$. 

The operator $\mathcal{A}_{\De x}$ of (\ref{O7}) is the generator of a Poisson process. Thus
\be \label{T7}
\int_{(\De x)\Z} dx \ g(x)e^{\mathcal{A}_{\De x}s}f(x) \ = \ \int_{(\De x)\Z} dx \ E[g(X_s) \ | \ X_0=x] f(x) \ ,
\ee
where $X_s$ is the discrete random variable taking values in $(\De x)\Z$  with pdf
\be \label{U7}
P\left(X_s= y\ \big| \ X_0=x\right) \ = \ \frac{(s/\De x)^n}{n!}\exp\left[-\frac{s}{\De x}\right]  \ , \quad n \ = \ \frac{x-y}{\De x} \ , \ n=0,1,2,...
\ee
If $f,g:\R\ra\R$ are continuous functions of compact support then it is easy to see from (\ref{U7}) that
\be \label{V7}
\lim_{\De x\ra 0} \int_{(\De x)\Z} dx \ g(x)e^{\mathcal{A}_{\De x}s}f(x) \ = \ \int_{-\infty}^\infty dx \ g(x-s)f(x)  \ ,
\ee
as we expect from (\ref{Q7}).  The operator $\mathcal{B}_{\De x}$ of (\ref{O7})  is the generator of a Yule process \cite{kt}. Thus
\be \label{W7}
\int_{(\De x)\Z} dx \ g(x)e^{-\mathcal{B}_{\De x}s}f(x) \ = \ \int_{(\De x)\Z} dx \ E[g(Y_s) \ | \ Y_0=x] f(x) \ ,
\ee
where $Y_s$ is a discrete random variable taking values in $(\De x)\Z$.  The pdf of $Y_s$ conditioned on $Y_0=\De x$ is given by
\be \label{X7}
P\left(Y_s= y\ \big| \ Y_0=\De x\right) \ = \ e^{-s}\left\{1-  e^{-s}\right\}^{n-1} \ , \quad n \ = \ \frac{y}{\De x} \ , \ n=1,2,...
\ee
Hence $Y_s$ conditioned on $Y_0=\De x$ is a geometric variable.  More generally, the variable $Y_s$ conditioned on $Y_0=m\De x$ with $m\ge 2$ is a sum of $m$ independent geometric variables with distribution (\ref{X7}) and is hence negative binomial. It follows that  if $f(\cdot)$ is supported in the set $\{x\in(\De x)\Z \ : \ x>0\}$ then 
\be \label{Y7}
\int_{(\De x)\Z} dx \ g(x)e^{-\mathcal{B}_{\De x}s}f(x) \ = \ \int_{(\De x)\Z} dx \ E\left[g\left( Y_s^1+\cdots+ Y_s^m\right) \ \big| \ m=x/\De x\right] f(x)  \ ,
\ee
where the $Y_s^j, \ j=1,2,..,$ are independent and  have the distribution (\ref{X7}).  Since the mean of $Y_s$ is $e^s\De x$, it follows from (\ref{Y7}) that
\be \label{Z7}
\lim_{\De x\ra 0} \int_{(\De x)\Z} dx \ g(x)e^{-\mathcal{B}_{\De x}s}f(x) \ = \ \int_0^\infty dx \ g(e^s x)f(x)  \ ,
\ee
as we expect from (\ref{Q7}).

The Smereka model consists of the evolution determined by (\ref{K7}) with $\ve =2\De x$  and the conservation law
\be \label{AA7}
\int_{\{(\De x)\Z \ : \ x>0\}} xc_\ve(x,t) \ dx \ = \ 1.
\ee 
We see from (\ref{S7}) that the model is equivalent to a two dimensional dynamical system with dynamical law depending on the initial data.  The first differential equation in this system is given by the first equation in (\ref{H7}). The second differential equation is determined by differentiating the expression on the LHS of (\ref{AA7}) and setting it equal to zero.  Using (\ref{U7}), (\ref{X7}) we can write the LHS of (\ref{AA7}) in terms of $u(t),v(t)$. In the case when the initial data is given by
\be \label{AB7}
c_\ve(x,0) \ = \ 0 \ {\rm if \ } x\ne \De x \ , \quad c_\ve(\De x,0) \ = \ \frac{1}{(\De x)^2} \ ,
\ee
it has a simple form.  Thus from (\ref{S7}), (\ref{U7}), (\ref{X7}) we have that
\begin{multline} \label{AC7}
c_\ve(\cdot,t) \ = \ u(t)^B e^{Av(t)} c_\ve(\cdot,0) \ = \ u(t)^B \exp\left[-\frac{v(t)}{\De x}\right]c_\ve(\cdot,0) \ , \\
{\rm so  \ \  } c_\ve(x,t) \ = \  u(t)\left[1-u(t)\right]^{n-1}\exp\left[-\frac{v(t)}{\De x}\right]\frac{1}{(\De x)^2}  \ , \quad n= \frac{x}{\De x} \ .
\end{multline}
From (\ref{AC7}) we see that the conservation law (\ref{AA7}) becomes in this case
\be \label{AD7}
\frac{1}{u(t)} \exp\left[-\frac{v(t)}{\De x}\right] \ = \ 1.
\ee
Hence from the first equation of (\ref{H7}) and (\ref{AD7}) we conclude that $v(\cdot)$ is the solution to the initial value problem
\be \label{AE7}
  \exp\left[-\frac{v(t)}{\De x}\right]  \ = \ \frac{dv(t)}{dt}, \quad v(0)=0.
\ee 
The  initial value problem (\ref{AE7}) was derived in $\S3$ of \cite{smereka} by a different method. It can be solved explicitly, and so we obtain the formulas
\be \label{AF7}
u(t) \ = \  \frac{1}{1+t/\De x} \ , \quad v(t) \ = \ \De x \log\left[1+\frac{t}{\De x}\right] \ ,
\ee
when the initial data is given by (\ref{AB7}).  Hence from (\ref{AC7}), (\ref{AF7}) we have an explicit expression for $c_\ve(\cdot,t)$, and it is easy to see that this converges as $t\ra \infty$ to the self-similar solution corresponding to the $\beta=1$ random variable defined by (\ref{E1}). It was also shown  in \cite{smereka} that if the initial data has finite support then $c_\ve(\cdot,t)$ converges as $t\ra \infty$ to the $\beta=1$ self-similar solution.

The large time behavior of the CP model can be easily understood using the {\it beta  function} of a random variable introduced in \cite{c2}.  If $X$ is a random variable with pdf $c_X(\cdot)$, we define functions $w_X(\cdot), \ h_X(\cdot)$ by
\be \label{AG7}
w_X(x) \ = \int_x^\infty c_X(x') \ dx' \ , \quad h_X(x) \ = \  \int_x^\infty w_X(x') \ dx'  \ , \quad x\in\R \ .
\ee
Evidently one has that
\be \label{AH7}
w_X(x) \ = \ P(X>x), \ \quad \frac{h_X(x)}{w_X(x)} \ = \ E\left[X-x \  \big| \ X>x\right] \ \quad x\in\R \ . 
\ee
The beta function $\beta_X(\cdot)$ of $X$ is then defined by
\be \label{AI7}
\beta_X(x)  \ = \ \frac{c_X(x)h_X(x)}{w_X(x)^2} \ = \  1+\frac{d}{dx}  E\left[X-x \  \big| \ X>x\right] \ \quad x\in\R \ . 
\ee
An important property of the beta function is that it is invariant under affine transformations. That is
\be \label{AJ7}
 \beta_X(\la x+\mu) \ = \ \beta_{(X-\mu)/\la}(x) \ , \quad \la>0, \ \mu,x\in\R. 
\ee
One can also see that the function $h_X(\cdot)$ is {\it log concave} if and only if $\sup\beta_X(\cdot)\le 1$. 

To understand the large time behavior of the CP model we first observe that the rate of coarsening equation (\ref{D1}) can be rewritten as 
\be \label{AK7}
\frac{d}{dt} \langle X_t\rangle \ = \ \beta_{X_t}(0) \ , \quad t>0. 
\ee
Furthermore, the beta function of the self-similar variable $X_0$ with pdf defined by (\ref{E1}) and parameter $\beta>0$ is simply a constant  $\beta_{X_0}(\cdot)\equiv \beta$.  We have already shown that the time evolution of the CP equation (\ref{A1}) is given by the affine transformation (\ref{D7}). It is also relatively simple to establish that for a random variable $X_0$ corresponding to the initial data for (\ref{A1}), (\ref{B1}), then  $\lim_{t\ra\infty} F_{1/\La_0}(0,t)=\|X_0\|_\infty$. 
Hence  it follows from (\ref{AI7}), (\ref{AJ7})  that if $\lim_{x\ra\|X_0\|_\infty}\beta_{X_0}(x)=\beta$  for the initial data random variable $X_0$ of (\ref{A1}), (\ref{B1}),  then the large time behavior of the CP model is determined by the self-similar solution (\ref{E1}) with parameter $\beta$.

We have already observed from (\ref{D1}) that the function $\La_0(\cdot)$ in the CP model is  increasing. If we assume that $\inf \beta_{X_0}(\cdot)>0$, we can also see that $\lim_{t\ra\infty} \La_0(t)=\infty$. Hence in this case there exists a doubling time $T_{\rm double}$ for which $\La_0(t)=2\La_0(0)$ when $t=T_{\rm double}$. Evidently $\inf\beta_{X_t}(\cdot)\ge \inf\beta_{X_0}(\cdot)$ and  $\sup\beta_{X_t}(\cdot)\le \sup\beta_{X_0}(\cdot)$. The notion of doubling time can be a useful tool in obtaining an estimate on the rate of convergence of the solution of the  CP model to a self-similar solution at large time. 

We illustrate this by considering the CP model with Gaussian initial data. In particular we assume the initial data $c_0(\cdot)$ is given by the formula
\be \label{AL7}
c_0(x) \ = \ K(L)\exp\left[-a(L)x-\{a(L)x\}^2/2L\right] \ ,
\ee
where $L\ge L_0>0$ and $K(L),a(L)$ are uniquely determined by the requirement  that (\ref{B1}) holds and the function $\La_0(\cdot)$ in (\ref{C1}) satisfies $\La_0(0)=1$. It is easy to see that the beta function for the initial data (\ref{AL7}) is bounded above and below strictly larger than zero, uniformly in $L\ge L_0$. Hence from (\ref{AK7})  there are  constants $T_1,T_2>0$ depending only on $L_0$ such that $T_1\le T_{\rm double}\le T_2$ for all $L\ge L_0$.  It follows also from (\ref{B7}) that there are  constants $\la_0,\la_1,\mu_0,\mu_1>0$ depending only on $L_0$ such that $F_{1/\La_0}(x,T_{\rm double})=\la(L)x+\mu(L), \ x\in\R,$ where $0<\la_0\le\la(L)\le \la_1<1$ and $ 0<\mu_0\le\mu(L)\le \mu_1$ for  $L\ge L_0$. Since $F_{1/\La_0}$ is a linear function, $c(\cdot,T_{\rm double})$ is also Gaussian. Rescaling so that the mean of $X_t$ is now $1$ at time $T_{\rm double}$, we see that $c(x,T_{\rm double})$ is given by the formula (\ref{AL7}) with $L$ replaced by $A(L)$, where
\be \label{AM7}
A(L) \ = \  L[1+\mu(L)a(L)/L]^2=L+2\mu(L)a(L)+\mu(L)^2a(L)^2/L \ .
\ee
Since we are assuming that $\La_0(0)=1$, there are constants $a_0,a_1>0$ depending only on $L_0$ such that $a_0\le a(L)\le a_1$ for $L\ge L_0$.  We conclude then from (\ref{AM7}) that
\be \label{AO7}
L+\del_0 \ \le A(L) \ \le L+\del_1 \ , \quad   {\rm where \ } \del_0,\del_1>0 \ {\rm depend \ only \ on \ }L_0 \  .
\ee

It is easy to estimate from (\ref{AO7}) the rate of convergence to the $\beta=1$ self similar solution $c(x,t)=(1+t)^{-2}\exp[-x/(1+t)]$ for solutions to the CP model with Gaussian initial data.  
 First we estimate the beta function of a Gaussian random variable. 
\begin{lem}
Let $L>0$ and  $Z_L$ be a positive random variable with pdf proportional to $e^{-z-z^2/2L}, \ z>0$. 
Then for any $L_0>0$ there is a constant $C$ depending only on $L_0$  such that if $L\ge L_0$ the beta function $\beta_L$ for $Z_L$ satisfies the inequality
\be \label{AP7}
\left| \ \beta_L(z)-1+\frac{1}{L(1+z/L)^2} \ \right| \ \le \ \frac{C}{L^2(1+z/L)^4} \quad {\rm for \ } z\ge 0. 
\ee
\end{lem}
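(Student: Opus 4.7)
The plan is to exploit the Gaussian structure of the density to reduce $\beta_L(z)$ to an elementary function of the Mills ratio $M(a) = (1-\Phi(a))/\phi(a)$ (with $\phi,\Phi$ denoting the standard normal density and cdf), and then invoke its classical asymptotic expansion.

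First I would complete the square in the exponent to write $-z - z^2/(2L) = L/2 - (z+L)^2/(2L)$. The constant factor $e^{L/2}$ is absorbed into the normalization and is irrelevant for the beta function, since $\beta_X$ is unchanged under rescaling of the density. Set $a = a(z) := (z+L)/\sqrt{L} \geq \sqrt{L_0}$ and substitute $u = (z'+L)/\sqrt{L}$ in the defining integrals for $w_{Z_L}(z)$ and $h_{Z_L}(z)$; using the Fubini identity $h_X(z) = \int_z^\infty (z'-z)c_X(z')\,dz'$ together with $\int_a^\infty u e^{-u^2/2}\,du = e^{-a^2/2}$, I obtain, up to a common normalizing factor that cancels in $\beta_L$,
\begin{equation*}
c_{Z_L}(z) \propto \phi(a),\qquad w_{Z_L}(z) \propto \sqrt{L}\,(1-\Phi(a)),\qquad h_{Z_L}(z) \propto L\bigl[\phi(a)-a(1-\Phi(a))\bigr].
\end{equation*}
Plugging into the definition (\ref{AI7}) of the beta function and dividing numerator and denominator by $\phi(a)^2$ yields the compact expression
\begin{equation*}
\beta_L(z) = \frac{1-aM(a)}{M(a)^2}.
\end{equation*}

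The remainder is a computation with the Mills ratio. Iterated integration by parts based on $\phi'(u) = -u\phi(u)$ produces the alternating-sign expansion $M(a) = 1/a - 1/a^3 + 3/a^5 - \rho(a)$ with $0 < \rho(a) \leq 15/a^7$. Substituting this into the formula for $\beta_L$ and expanding in powers of $1/a^2$, the leading term is $1$, the next is $-1/a^2$, and a direct estimate shows the residual is $O(1/a^4)$. Translating back via $1/a^2 = 1/[L(1+z/L)^2]$ and $1/a^4 = 1/[L^2(1+z/L)^4]$ gives exactly (\ref{AP7}).

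The main (mild) obstacle is ensuring that the constant $C$ can be taken to depend only on $L_0$. Fix any threshold $C_0 > \sqrt{L_0}$. For $a \geq C_0$ the Mills expansion provides an explicit remainder bound uniform in $z$ and $L$, with constants depending only on $C_0$ and hence only on $L_0$. For $a \in [\sqrt{L_0}, C_0]$ the functions $\beta_L(z)$, $1 - 1/a^2$, and $1/a^4$ are continuous in $a$, and $M(a) > 0$ stays bounded away from zero on this compact interval, so the difference $|\beta_L(z) - 1 + 1/a^2|$ is bounded by some multiple of $1/a^4$ with a constant determined by $L_0$ alone. Combining the two regimes produces (\ref{AP7}).
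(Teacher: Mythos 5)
Your proof is correct. It reaches the same single-variable reduction as the paper but through a different computational route. The paper changes variables $x = z'(1+z/L)$ directly in the defining integrals (\ref{AR7}) to arrive at
$\beta_L(z) = \int_0^\infty x e^{-x-\delta x^2/2}\,dx \,\big/\, \bigl[\int_0^\infty e^{-x-\delta x^2/2}\,dx\bigr]^2$
with $\delta = 1/[L(1+z/L)^2]$, keeping the exponential and the small Gaussian perturbation separate; the estimate then follows from a universal bound for all $\delta>0$ plus a Taylor expansion of the integral quotient as $\delta\to 0$. You instead complete the square to get a purely Gaussian density, so that everything collapses to the Mills ratio $M(a)$ with $a = (z+L)/\sqrt L = 1/\sqrt{\delta}$, and then invoke the classical asymptotic expansion of $M$ with its explicit remainder bound. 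The two parametrizations are equivalent ($\delta = 1/a^2$) and both split into a small-$\delta$ (large-$a$) regime and a compact regime. What your version buys is that the $O(\delta^2)$ error claim — which the paper leaves as an unexplained "Taylor expansion" — is made fully explicit through the alternating Mills-ratio expansion $M(a) = 1/a - 1/a^3 + 3/a^5 - \rho(a)$ with $0<\rho(a)\le 15/a^7$; what the paper's route buys is that it never leaves the domain $(0,\infty)$ or invokes the normal cdf, so the Taylor expansion can be carried out directly on the two one-dimensional integrals. Your handling of the compact regime via continuity of $\beta_L$ as a function of $a$ alone is a clean substitute for the paper's universal upper bound on the ratio of integrals.
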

\begin{proof}
We use the formula for the beta function $\beta(\cdot)$ of the pdf $c(\cdot)$ given by (\ref{AI7}). Thus 
\be \label{AQ7}
\beta(z) \ = \ \frac{c(z)h(z)}{w(z)^2} \ , \quad w(z) \ =  \ \int_0^\infty c(z+z') \ dz' \ , \ \  h(z) \ =  \ \int_0^\infty z' c(z+z') \ dz' \ .
\ee 
Letting $c(z)=e^{-z-z^2/2L}$, we have that
\be \label{AR7}
w(z) \ =  \ c(z) \int_0^\infty e^{-z'[1+z/L]-z'^2/2L} \ dz'  \ , \quad  h(z) \ =  \ c(z) \int_0^\infty z' e^{-z'[1+z/L]-z'^2/2L} \ dz'  \ .
\ee
It follows from (\ref{AQ7}), (\ref{AR7}) on making a change of variable that
\be \label{AS7}
\beta_L(z) \ = \  \int_0^\infty xe^{-x-\del x^2/2} \ dx \  \Big/ \left[\int_0^\infty e^{-x-\del x^2/2} \ dx \ \right]^2 \  ,
\ee
where $\del =1/L[1+z/L]^2$. It is easy to see that there is a universal constant $K$ such that the RHS of (\ref{AS7}) is bounded above by $K$ for all $\del>0$.  We also have by Taylor expansion in $\del$ that $\beta_L(z)=1-\del+O(\del^2)$ if $\del$ is small. The inequality (\ref{AP7}) follows. 
\end{proof}
\begin{proposition}
Let  $c_0(\cdot,\cdot)$ be the solution  to the  CP system (\ref{A1}), (\ref{B1}) with Gaussian initial data and $\La_0(\cdot)$ be given by (\ref{C1}). Then there exists $t_0>2$ and  constants $C_1,C_2>0$ such that
\be \label{AT7}
1-\frac{C_1}{\log t} \ \le \ \frac{d\La_0(t)}{dt} \ \le \ 1-\frac{C_2}{\log t} \quad {\rm for \ } \ t\ge t_0.
\ee
\end{proposition}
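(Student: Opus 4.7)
The plan is to iterate the doubling-rescaling procedure of the excerpt and apply Lemma 2.1 at each step. Normalize so that $\La_0(0)=1$, and define the doubling times $T_n$ by $\La_0(T_n)=2^n$, together with the iterates $L_n=A^{(n)}(L)$ of the map in (\ref{AM7}). The bound (\ref{AO7}) then gives $L+n\delta_0\le L_n\le L+n\delta_1$, so $L_n$ grows linearly in $n$, and the goal is to match $n$ to $\log t$ via the scale invariance of the CP equation.

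To convert linear growth of $L_n$ in $n$ into logarithmic growth in $t$, I use the scale invariance of (\ref{A1}), (\ref{B1}): for any $\mu>0$ the map $\tilde c(y,s)=\mu^2 c(\mu y,T_{n-1}+\mu s)$ sends a CP solution with $\int x c\,dx=1$ to another such solution. Choosing $\mu=2^{n-1}$ at step $n$ makes the rescaled data at $s=0$ a Gaussian (\ref{AL7}) with parameter $L_{n-1}$ and mean $1$, and the rescaled doubling time $\tau_n=(T_n-T_{n-1})/2^{n-1}$ lies in the uniform interval $[T_1,T_2]$ established in the excerpt. Summing gives $(2^n-1)T_1\le T_n\le (2^n-1)T_2$, hence $n=\log_2 T_n+O(1)$, and so $L_{n-1}\asymp \log T_n\asymp \log t$ for $t\in[T_{n-1},T_n]$.

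Next I bound $\beta_{X_t}(0)$ throughout each cycle $[T_{n-1},T_n]$, not just at the endpoints. A short calculation combining the characteristic formula (\ref{D7}), the affine invariance (\ref{AJ7}) of $\beta$, and the identification $X_t\stackrel{d}{=}m_1(t)X_0-m_2(t)$ conditional on $X_0>m_2(t)/m_1(t)$ yields the key identity $\beta_{X_t}(x)=\beta_{X_0}(F_{1/\La_0}(x,t))$. Specializing to $x=0$ and applying this in the $n$-th rescaled cycle, $d\La_0/dt$ at time $t\in[T_{n-1},T_n]$ equals $\beta_{L_{n-1}}(z(s))$ with $s=(t-T_{n-1})/2^{n-1}$ and $z(s)=m_2(s)/m_1(s)$; this $z$ is increasing in $s$ with $z(0)=0$ and $z(\tau_n)=\mu(L_{n-1})\le \mu_1$, so $z\in[0,\mu_1]$ uniformly. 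Lemma 2.1 then gives constants $c,C>0$ depending only on $L_0$ and $\mu_1$ such that
\[
1-\frac{C}{L_{n-1}}\le \beta_{L_{n-1}}(z)\le 1-\frac{c}{L_{n-1}} \quad \text{for } L_{n-1}\ge L_0,\ z\in[0,\mu_1].
\]

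Combining the two previous paragraphs yields (\ref{AT7}) once $t_0$ is chosen so that the corresponding $L_{n_0}$ is large enough for the $O(1/L^2)$ remainder in Lemma 2.1 to be absorbed into the constants. The step I expect to be the main obstacle is the uniform tracking through the iteration: one must check that the constants $L_0$, $\delta_0$, $\delta_1$, $\mu_0$, $\mu_1$, $a_0$, $a_1$ controlling the rescaled Gaussian at each step can be chosen once and for all so that Lemma 2.1 applies with fixed constants throughout. This is largely packaged by the uniformity statements preceding (\ref{AO7}); the less automatic piece is the derivation of the identity $\beta_{X_t}(x)=\beta_{X_0}(F_{1/\La_0}(x,t))$, which is where the method-of-characteristics analysis does its real work.
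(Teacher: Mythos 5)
Your proof is correct and takes essentially the same route as the paper's: both arguments iterate the doubling-time estimate (\ref{AO7}) to obtain $L\asymp\log t$ and then apply Lemma 2.1 together with (\ref{AK7}). The one place you diverge is in how the intermediate times inside a doubling cycle are handled: the paper re-normalizes the Gaussian at every $t$ and invokes the monotonicity of $L_t$ in $t$ to interpolate between the dyadic times $t_N$, whereas you freeze $L_{n-1}$ across the $n$-th cycle and let the argument $z(s)$ of $\beta_{L_{n-1}}(\cdot)$ range over a fixed compact interval, using the uniformity of (\ref{AP7}) in $z$. These are equivalent; your version has the minor advantage of not needing to check that $L_t$ is monotone (which the paper asserts without proof, though it is true), at the cost of tracking the argument $a(L_{n-1})F_{1/\La_0}(0,\cdot)$, which you slightly misstate as $m_2/m_1$ alone (the $a(L_{n-1})$ factor should appear, but it is bounded between $a_0$ and $a_1$, so nothing breaks). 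The identity $\beta_{X_t}(x)=\beta_{X_0}(F_{1/\La_0}(x,t))$ you single out as the ``less automatic'' piece is in fact exactly what the paper uses implicitly from (\ref{D7}) and (\ref{AJ7}), so it is not an obstacle.
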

\begin{proof}
The initial data can be written in the form $c_0(x,0)=K_0\exp[-A_0(x+B_0)^2]$ where $K_0,A_0,B_0$ are constants with $K_0,A_0>0$. It follows from (\ref{B7}), (\ref{D7}) that for $t>0$ one has 
$c_0(x,t)=K_t\exp[-A_t(x+B_t)^2]$, where $B_t=[B_0+F_{1/\La_0}(0,t)](A_0/A_t)^{1/2}$. Since $\lim_{t\ra\infty} F_{1/\La_0}(0,t)=\infty$, it follows that we may assume wlog that the initial data is of the form (\ref{AL7}) and $\La_0(0)=1$. Evidently then $\beta_{X_0}(x)=\beta_L(a(L)x), \ x\ge0,$ where $\beta_L$ satisfies the inequality (\ref{AP7}).

Assume now that  the initial data for (\ref{A1}), (\ref{B1}) is  given by (\ref{AL7}) where $L=L_0>0$, and let $L_t$ be the corresponding value of $L$ determined by $c_0(\cdot,t)$.  We have then from (\ref{AO7}) and the discussion preceding it that for $N=1,2,..,$ there exist times $t_N$ such that
\be \label{AU7}
[2^N-1]T_1  \le \ t_N  \le [2^N-1]T_2, \quad L_0+N\del_0\le L_{t_N}\le L_0+N\del_1 .
\ee
Since  $L_t$ is an increasing function of $t$, it follows from (\ref{AP7}), (\ref{AU7})  that $\beta_{X_t}(0)$ is bounded above and below as in (\ref{AT7}).    Now using the identity (\ref{AK7}) we obtain the inequality (\ref{AT7}). 
\end{proof}
We wish next to compare the foregoing to the situation of the diffusive CP model with Gaussian initial data.    From (\ref{M2}) the solution to (\ref{G1}) with initial data $c_0(\cdot)$ is given by  
\be \label{AV7}
c_\ve(x,t) \ = \ \int_0^\infty G_{\ve,D}(x,y,0,t) c_0(y) \ dy, \quad x>0, \  t>0,
\ee
where $G_{\ve,D}$ is the Dirichlet Green's function for the half space $\R^+$ defined by  (\ref{L2}) with $A(s)=1/\La_\ve(s)$.  
If we replace the Dirichlet Green's function $G_{\ve,D}$  by the full space Green's function $G_\ve$ of (\ref{K2}) then  the solution $c_\ve(\cdot,t)$ is Gaussian for $t>0$ provided $c_\ve(\cdot,0)$ is Gaussian, just as in the CP model. We shall see in  $\S5$ that it is legitimate to approximate $G_{\ve,D}(x,y,0,t)$ by $G_\ve(x,y,0,t)$ provided $x,y\ge M\ve$ for some large constant $M$. Making the approximation $G_{\ve,D}\simeq G_\ve$ in (\ref{AV7}), we obtain a formula similar to (\ref{AM7}) for the length scale $A_\ve(L)$ of the Gaussian at doubling time.  It is given by
\be \label{AW7}
A_\ve(L) \ = \ L[1+\mu_{\ve}(L)a(L)/L]^2[1+\ve a(L)^2\sig^2_\ve(L)\la_{\ve}(L)^2/L]^{-1} \ .
\ee
As in (\ref{AM7}) the functions $\la_\ve(L),\mu_\ve(L)$ are obtained from the coefficients of the linear function $F_{1/\La_\ve}$, when $t=T_{\ve,{\rm double}},$  where $T_{\ve,{\rm double}}$ denotes the doubling time for the diffusive model. The expression $\sig^2_\ve(L)$ is given by the formula for $\sig^2_A(T)$ in  (\ref{J2}) with $A(s)=1/\La_\ve(s),  \ s\le T,$ and $T=T_{\ve,{\rm double}}$. In $\S6$ we shall study the $\ve\ra 0$ limit of the diffusive CP model. We prove that if the CP and diffusive CP models have the same initial data, then $\lim_{\ve\ra 0}\La_\ve(t)=\La_0(t)$ uniformly in any finite interval $0\le t\le T$.  It follows that $\lim_{\ve\ra0}A_\ve(L)=A(L)$, where $A(L)$ is defined by (\ref{AM7}).

We wish next to try to understand the evolution of the diffusive CP model when initial data is non-Gaussian. Let $w_\ve(x,t), \ h_\ve(x,t)$  be defined in  terms of the solution $c_\ve(\cdot,\cdot)$  to (\ref{G1}) by 
\be \label{AX7}
w_\ve(x,t) \ = \ \int_x^\infty c_\ve(x',t) \ dx', \quad h_\ve(x,t) \ = \ \int_x^\infty w_\ve(x',t) \ dx' \ .
\ee
Then $w_\ve(\cdot,t),h_\ve(\cdot,t)$ are proportional to the functions (\ref{AG7}) corresponding to the random variable $X_t$ with pdf  $c_\ve(\cdot,t)/\int_0^\infty c_\ve(x,t) \ dx$. Making the approximation $G_{\ve,D}\simeq G_\ve$, we see from (\ref{AV7}), (\ref{AX7}), (\ref{K2}) that 
\begin{eqnarray} \label{AY7}
w_\ve(x,t) \ &=& \ \exp\left[\int_0^t\frac{ds}{\La_\ve(s)}\right]\int_{-\infty}^\infty G_\ve(x,y,0,t) w_\ve(y,0) \ dy  \ ,\\
h_\ve(x,t) \ &=& \ \exp\left[2\int_0^t\frac{ds}{\La_\ve(s)}\right]\int_{-\infty}^\infty G_\ve(x,y,0,t) h_\ve(y,0) \ dy \ . \label{AZ7}
\end{eqnarray}
 Writing $h_\ve(x,t)=\exp[-q_\ve(x,t)], \ x,t>0,$ in (\ref{AZ7}), we see  from (\ref{K2})  that the semi-classical approximation to $q_\ve(x,t)$ is given by the formula
\be \label{BA7}
q_\ve(x,t) \ = \  \frac{1}{2}\log 2\pi\ve+\frac{1}{2}\log \sig_{1/\La_\ve}^2(t)-2\int_0^t \frac{ds}{\La_\ve(s)}+\inf_{y} \left[   \frac{\{x+m_{2,1/\La_\ve}(t)-m_{1,1/\La_\ve}(t)y\}^2}{2\ve\sig_{1/\La_\ve}^2(t)}+ q_\ve(y,0) \right] \ .
\ee
Let us assume that $q_\ve(\cdot,0)$ is given similarly to (\ref{AL7}) by
\be \label{BB7}
q_\ve(y,0) \ = \ {\rm constant}+ a(L)y+ \{a(L)y\}^2/2L \  , \quad y>0. 
\ee
The minimizer in (\ref{BA7}) is then $y_{\rm min}(x,t)$ where
\be \label{BC7}
y_{\rm min}(x,t) \ = \  \frac{1}{1+\ve a(L)^2\sig_{1/\La_\ve}^2(t)/m_{1,1/\La_\ve}(t)^2L}\left[ \frac{x+m_{2,1/\La_\ve}(t)}{m_{1,1/\La_\ve}(t)}- \frac{\ve a(L)\sig_{1/\La_\ve}^2(t)}{m_{1,1/\La_\ve}(t)^2} \right] \  .
\ee
If we substitute $y=y_{\rm min}(x,t)$ into (\ref{BA7})  we obtain a quadratic formula for $q_\ve(x,t)$ similar to (\ref{BB7}). If $t=T_{\ve,{\rm double}}$  then $L$ in (\ref{BB7}) is replaced by $A_\ve(L)$ as in (\ref{AW7}). 

More generally we can consider the case when $q_\ve(\cdot,0)$ is convex so (\ref{BA7}) is a convex optimization problem with  a unique minimizer $y=y_{\rm min}(x,t)$. In that case it is easy to see that
\begin{multline} \label{BD7}
\frac{\pa q_\ve(x,t)}{\pa x} \ = \ \frac{1}{m_{1,1/\La_\ve}(t)} \frac{\pa q_\ve(y_{\rm min}(x,t),0)}{\pa y} \ , \\
\frac{\pa^2 q_\ve(x,t)}{\pa x^2} \ = \ \frac{1}{m_{1,1/\La_\ve}(t)^2} \frac{\pa^2 q_\ve(y_{\rm min}(x,t),0)}{\pa y^2}  \ \Bigg/ \ \left[1+\frac{\ve \sig_{1/\La_\ve}^2(t)}{m_{1,1/\La_\ve}(t)^2} \ \frac{\pa^2 q_\ve(y_{\rm min}(x,t),0)}{\pa y^2} \ \right] \ .
\end{multline}
It follows from (\ref{BD7}) that if the inequality
\be \label{BE7}
 \frac{\pa^2 q_\ve(x,t)}{\pa x^2} \ \le \  \left[ \  \frac{\pa q_\ve(x,t)}{\pa x}  \ \right]^2  \ , \quad x\ge 0,
\ee
holds at $t=0$ then it holds for all $t>0$. 
We define now the function $\beta_\ve:[0,\infty)\times\R^+\ra \R$ in terms of $q_\ve$ by the formula
\be \label{BF7}
\beta_\ve(x,t) \ = \ 1- \frac{\pa^2 q_\ve(x,t)}{\pa x^2} \  \Bigg/ \left[ \  \frac{\pa q_\ve(x,t)}{\pa x}  \ \right]^2 \ .
\ee
We can see from (\ref{AI7}) that the function $h_\ve(\cdot,t)=\exp[-q_\ve(\cdot,t)]$ is proportional to $h_{X_t}(\cdot)$ for some  random variable $X_t$ if and only if $\beta_\ve(\cdot,t)$ is non-negative. Hence by the remark after (\ref{BE7}), if $q_\ve(\cdot,0)$ corresponds to a random variable $X_0$, then $q_\ve(\cdot,t)$ corresponds to a random variable $X_t$ for all $t>0$. 
From (\ref{BD7}), (\ref{BF7}) we have that
\be \label{BG7}
1-\beta_\ve(x,t) \ = \ \left[1-\beta_\ve(y_{\rm min}(x,t),0)\right] \ \Big/ \left[1+\frac{\ve \sig_{1/\La_\ve}^2(t)}{m_{1,1/\La_\ve}(t)^2} \ \frac{\pa^2 q_\ve(y_{\rm min}(x,t),0)}{\pa y^2} \ \right]  \ .
\ee
It follows from (\ref{BG7}) that if $\sup \beta_\ve(\cdot,0)\le 1$ then $\sup\beta_\ve(\cdot,t)\le 1$ for $t>0$.   Furthermore, (\ref{BG7}) also indicates that $\beta_\ve(\cdot,t)$ should increase towards $1$ as $t\ra\infty$. 

It is well known \cite{hopf} that the solution $q_\ve$ to the optimization problem (\ref{BA7}) satisfies a Hamilton-Jacobi  PDE. We can easily see from (\ref{BA7}), (\ref{BD7}) that the PDE is given by
\be \label{BH7}
\frac{\pa q_\ve(x,t)}{\pa t}+\left[\frac{x}{\La_\ve(t)}-1\right]\frac{\pa q_\ve(x,t)}{\pa x} +\frac{\ve}{2}\left[\frac{\pa q_\ve(x,t)}{\pa x}\right]^2
+\frac{1}{\La_\ve(t)}-\frac{1}{2\sig^2(t)}  \ = \  0 \ .
\ee
Differentiating (\ref{BH7}) with respect to $x$ and setting $v_\ve(x,t)=\pa q_\ve(x,t)/\pa x$, we see that $v_\ve(x,t)$ is the solution to  the inviscid Burgers' equation with  linear drift,
\be \label{BI7}
\frac{\pa v_\ve(x,t)}{\pa t}+\left[\frac{x}{\La_\ve(t)}-1+\ve v_\ve(x,t)\right]\frac{\pa v_\ve(x,t)}{\pa x}
 +\frac{v_\ve(x,t)}{\La_\ve(t)} \ = \  0 \ .
\ee
If $q_\ve(\cdot,t)$ corresponds to the random variable $X_t$, then $v_\ve(x,t)=E[X_t-x \ | \ X_t>x]^{-1}, \ x\ge 0$, and since $\La_\ve(t)=E[X_t]$ we have that 
\be \label{BJ7}
v_\ve(0,t) \ = \ \frac{1}{\La_\ve(t)} \ .
\ee 
The system (\ref{BI7}), (\ref{BJ7}) is a model for the evolution of the pdf of a random variable $X_t$ which is intermediate between the CP and diffusive CP models.  To obtain the pdf of $X_t$ from the function $v_\ve(\cdot,t)$, we let $c_\ve(\cdot,t)=c_{X_t}(\cdot),  \ w_\ve(\cdot,t)=w_{X_t}(\cdot), \ h_\ve(\cdot,t)=h_{X_t}(\cdot)$ as in (\ref{AG7}). Then  $v_\ve(x,t)=w_\ve(x,t)/h_\ve(x,t)$ and
\be \label{BK7}
\Gamma_\ve(x,t) \ = \ v_\ve(x,t)^2- \frac{\pa v_\ve(x,t)}{\pa x} \ = \  \frac{c_\ve(x,t)}{h_\ve(x,t)} \  .
\ee 
We also have that
\be \label{BL7}
v_\ve(x,t) \ = \ -\frac{\pa}{\pa x} \log h_\ve(x,t) \ , \quad {\rm whence \ }  h_\ve(x,t) \ =  A_\ve(t)\exp\left[-\int_0^x v_\ve(x',t) \ dx' \right] \ ,
\ee
where $A_\ve(\cdot)$ can be an arbitrary positive function.  Evidently (\ref{BK7}), (\ref{BL7}) uniquely determine the pdf of $X_t$ from the function $v_\ve(\cdot,t)$. 

We can do a more systematic derivation of the model (\ref{BI7}), (\ref{BJ7})  by beginning with the solution $c_\ve$ to the diffusive CP model (\ref{G1}), (\ref{H1}). Setting $w_\ve,h_\ve$ to be given by (\ref{AX7}), then we see on integration of (\ref{G1}) that $w_\ve$ is a solution to the PDE 
\be \label{BM7}
\frac{\pa w_\ve(x,t)}{\pa t}+\left[\frac{x}{\La_\ve(t)}-1\right]\frac{\pa w_\ve(x,t)}{\pa x}
 \ = \  \frac{\ve}{2} \frac{\pa^2 w_\ve(x,t)}{\pa x^2} \ .
\ee
If we integrate (\ref{BM7}) then we obtain a PDE for $h_\ve$, 
\be \label{BN7}
\frac{\pa h_\ve(x,t)}{\pa t}+\left[\frac{x}{\La_\ve(t)}-1\right]\frac{\pa h_\ve(x,t)}{\pa x}
-\frac{h_\ve(x,t)}{\La_\ve(t)} \ = \  \frac{\ve}{2} \frac{\pa^2 h_\ve(x,t)}{\pa x^2} \ .
\ee
Setting $h_\ve(x,t)=\exp[-q_\ve(x,t)]$, it follows from (\ref{BN7}) that $q_\ve(x,t)$ is a solution to the PDE
\be \label{BO7}
\frac{\pa q_\ve(x,t)}{\pa t}+\left[\frac{x}{\La_\ve(t)}-1\right]\frac{\pa q_\ve(x,t)}{\pa x}
+\frac{\ve}{2}\left[ \frac{\pa q_\ve(x,t)}{\pa x} \right]^2 +\frac{1}{\La_\ve(t)} \ = \  \frac{\ve}{2} \frac{\pa^2 q_\ve(x,t)}{\pa x^2} \ .
\ee
If we differentiate (\ref{BO7}) with respect to $x$ we obtain a PDE for the function $v_\ve(x,t)=\pa q_\ve(x,t)/\pa x$, whence  we have
\be \label{BP7}
\frac{\pa v_\ve(x,t)}{\pa t}+\left[\frac{x}{\La_\ve(t)}-1+\ve v_\ve(x,t)\right]\frac{\pa v_\ve(x,t)}{\pa x}
 +\frac{v_\ve(x,t)}{\La_\ve(t)} \ = \  \frac{\ve}{2} \frac{\pa^2 v_\ve(x,t)}{\pa x^2} \ .
\ee

For $0<\nu\le 1$, we define the {\it viscous} CP model with viscosity $\nu$ as the solution to the PDE
\be \label{BQ7}
\frac{\pa v_{\ve,\nu}(x,t)}{\pa t}+\left[\frac{x}{\La_{\ve,\nu}(t)}-1+\ve v_{\ve,\nu}(x,t)\right]\frac{\pa v_{\ve,\nu}(x,t)}{\pa x}
 +\frac{v_{\ve,\nu}(x,t)}{\La_{\ve,\nu}(t)} \ = \  \frac{\ve\nu}{2} \frac{\pa^2 v_{\ve,\nu}(x,t)}{\pa x^2} \ , \ \ x,t>0,
\ee
with boundary condition
\be \label{BR7}
\frac{\pa v_{\ve,\nu}(0,t)}{\pa x} \ = \ v_{\ve,\nu}(0,t)^2 \ , \quad t>0, 
\ee
and with the constraint
\be \label{BS7}
v_{\ve,\nu}(0,t) \ = \ \frac{1}{\La_{\ve,\nu}(t)} \ , \quad t\ge 0. 
\ee 
Assuming that (\ref{BQ7}), (\ref{BR7}) has a classical solution, we show that if the initial data for (\ref{BQ7}) corresponds to a random variable $X_0$, then $v_{\ve,\nu}(\cdot,t)$ corresponds to a random variable $X_t$ for $t>0$ in the sense that $v_{\ve,\nu}(x,t)=E[X_t-x \ | \ X_t>x]^{-1}, \ x\ge 0$. To see this we define $\Ga_{\ve,\nu}$ similarly to $\Ga_\ve$ in (\ref{BK7}) but with $v_\ve$ replaced by $v_{\ve,\nu}$ on the RHS.  It follows from (\ref{BQ7}), (\ref{BR7}) that $\Ga_{\ve,\nu}$ satisfies the PDE
\begin{multline} \label{BT7}
\frac{\pa \Gamma_{\ve,\nu}(x,t)}{\pa t}+\left[\frac{x}{\La_{\ve,\nu}(t)}-1+\ve v_{\ve,\nu}(x,t)\right]\frac{\pa \Gamma_{\ve,\nu}(x,t)}{\pa x} +2\frac{\Gamma_{\ve,\nu}(x,t)}{\La_\ve(t)} \\
 = \  \frac{\ve\nu}{2}\frac{\pa^2 \Gamma_{\ve,\nu}(x,t)}{\pa x^2}+ \ve(1-\nu) \left(\frac{\pa v_{\ve,\nu}(x,t)}{\pa x}\right)^2  \ ,
\end{multline}
with Dirichlet boundary condition $\Ga_{\ve,\nu}(0,t)=0, \ t>0$. 
Hence by the maximum principle \cite{pw}, if $\Ga_{\ve,\nu}(\cdot,0)$ is non-negative then $\Ga_{\ve,\nu}(\cdot,t)$ is non-negative for $t>0$. We see from (\ref{BK7}) that  the non-negativity of $\Ga_{\ve,\nu}(\cdot,t)$ is equivalent to $v_{\ve,\nu}(\cdot,t)$ corresponding to a random variable $X_t$. We have shown that for $0<\nu\le 1$ the viscous CP model corresponds to the evolution of a random variable $X_t, \ t\ge 0$. If $\nu=1$ the model is identical to the diffusive CP model (\ref{G1}), (\ref{H1}) with Dirichlet condition $c_\ve(0,t)=0, \ t>0$. 

We can think of the {\it inviscid} CP model (\ref{BI7}), (\ref{BJ7})  as the limit of the viscous CP model (\ref{BQ7}), (\ref{BR7}), (\ref{BS7}) as the viscosity $\nu\ra0$.  It is not clear however what happens to the boundary condition (\ref{BR7}) in this limit. Unless the initial data $v_\ve(\cdot,0)$ for (\ref{BI7}) is increasing, the solution $v_\ve(\cdot,t)$ develops discontinuities at some finite time  \cite{smoller}. For an entropy satisfying solution $v_\ve$, discontinuities have the property that the solution {\it jumps down} across the discontinuity. Hence if $v_\ve(\cdot,t)$ is discontinuous at the point $z$, then
\be \label{BU7}
\lim_{x\ra z^-}v_\ve(x,t) \ > \ \lim_{x\ra z^+}v_\ve(x,t) \ .
\ee
Observe now that for a random variable $X$, the function $x\ra E[X-x \ |  \ X>x]$ has discontinuities precisely at the atoms of $X$. In that case the function  {\it jumps up} across the discontinuity. Since $v_\ve(x,t)=E[X_t-x \ | \ X_t>x]^{-1}$ for some random variable $X_t$, it follows that  at discontinuities of $v_\ve(\cdot,t)$ the function jumps down.  Thus discontinuities of $v_\ve(\cdot,t)$ correspond to atoms of $X_t$, and the entropy condition for (\ref{BI7}) is automatically satisfied.

We have already observed that the function $t\ra\La_0(t)$ in the CP model (\ref{A1}), (\ref{B1}) is increasing, and that the function  $t\ra\La_\ve(t)$ in the diffusive CP model (\ref{G1}), (\ref{H1}) is also increasing.  To determine whether the function $t\ra\La_{\ve,\nu}(t)$ in the viscous CP model (\ref{BQ7})- (\ref{BS7}) is  increasing, we observe on setting $x=0$ in  (\ref{BQ7}) and using (\ref{BS7}) that $v_{\ve,\nu}(0,t)$ satisfies the equation
\be \label{BV7}
\frac{\pa v_{\ve,\nu}(0,t)}{\pa t}+\left\{1-\ve(1-\nu) v_{\ve,\nu}(0,t)\right\}\Gamma_{\ve,\nu}(0,t)+\ve(1-\nu) v_{\ve,\nu}(0,t)^3 +\frac{\ve\nu}{2}\frac{\pa \Ga_{\ve,\nu}(0,t)}{\pa x} \ = \ 0.
\ee
We have already seen that $\Ga_{\ve,\nu}(\cdot,t)$ is a non-negative function, and from (\ref{BR7}) it follows that $\Ga_{\ve,\nu}(0,t)=0$ for $t>0$.  Hence $\pa \Ga_{\ve,\nu}(0,t)/\pa x\ge 0$ for $t>0$. We conclude then from  (\ref{BV7}) that the function $t\ra v_{\ve,\nu}(0,t)$ is decreasing provided 
\be \label{BW7}
v_{\ve,\nu}(0,0) \ \le \  \frac{1}{\ve(1-\nu)} \ .
\ee
Thus from (\ref{BS7}) we see that if  (\ref{BW7}) holds, then the function $t\ra\La_{\ve,\nu}(t)$ is increasing. 
Note that in the case of the diffusive CP model when $\nu=1$ the condition (\ref{BW7}) is redundant. 

\vspace{.1in}

\section{The Inviscid CP Model-Proof of Theorem 1.1}
We shall restrict ourselves here to considering the solutions of (\ref{BI7}), (\ref{BJ7}) when the initial data $v_\ve(\cdot,0)$ is non-negative,  increasing and also the function $\Gamma_\ve(\cdot, 0)$ of (\ref{BK7})  is non-negative.  The condition (\ref{BW7}) becomes now  $v_\ve(0,t)\le \ve^{-1}$,  and assuming this holds also, we see that in this case (\ref{BI7}) may be solved by the method of characteristics.  To carry this out we set $\tilde{v}_\ve(x,t)=m_{1,1/\La_\ve}(t)v_\ve(x,t)$, where $m_{1,A}(\cdot)$ is defined by (\ref{B7}).  Then (\ref{BI7}) is equivalent to 
\be \label{W6}
\frac{\pa \tilde{v}_\ve(x,t)}{\pa t}+\left[\frac{x}{\La_\ve(t)}-1+\ve \frac{\tilde{v}_\ve(x,t)}{m_{1,1/\La_\ve}t)}\right]\frac{\pa \tilde{v}_\ve(x,t)}{\pa x}
  \ = \  0 \ .
\ee
From (\ref{W6}) it follows that if $x(s), \ s\ge 0,$ is a solution to the ODE
\be \label{X6}
\frac{dx(s)}{ds} \ = \ \frac{x(s)}{\La_\ve(s)}-1+\ve \frac{\tilde{v}_\ve(x(s),s)}{m_{1,1/\La_\ve}(s)} \ ,
\ee
and characteristics do not intersect, then $\tilde{v}_\ve(x(t),t) \ = \ \tilde{v}_\ve (x(0),0)$ for $t\ge 0$. We can therefore calculate the characteristics of (\ref{W6}) by setting $  \tilde{v}_\ve(x(s),s)=\tilde{v}_\ve(x(0),0)=v_\ve(x(0),0)$.  We define the  function $F_{\ve,A}(x,t, v_0(\cdot))$ depending on $x,t\ge 0$ and increasing function $v_0:[0,\infty)\ra (0,\infty)$ by 
\be \label{Y6}
z+\ve \frac{\sig_A^2(t)}{m_{1,A}(t)^2}v_0(z) \ = \ \frac{x+m_{2,A}(t)}{m_{1,A}(t)} \ = \ F_A(x,t) \ , \quad  F_{\ve,A}(x,t, v_0(\cdot)) \ = \ z,
\ee
where  $F_A$ is given by (\ref{B7}) and $\sig_A^2(\cdot)$  by (\ref{J2}).  Since $v_0(\cdot)$ is an increasing function there is a unique solution  $z$ to (\ref{Y6}) for all $x\ge 0$ provided $v_0(0)\le\ve^{-1} m_{1,A}(t)m_{2,A}(t)/\sig_A^2(t)$. If this condition holds then the method of characteristics now yields the solution to (\ref{BI7}) as
\be \label{Z6}
v_\ve(x,t) \ = \ \frac{1}{m_{1,A}(t)} v_\ve\left(F_{\ve,A}(x,t, v_\ve(\cdot,0)), 0\right) 
\ee
with $A=1/\La_\ve$. 
It follows from (\ref{Y6}), (\ref{Z6}) that $v_\ve(0,t) \ \le \  m_{2,A}(t)/\ve \sig_A^2(t)\le \ve^{-1}$. 

We wish to prove a global existence and uniqueness theorem for solutions of (\ref{BI7}), (\ref{BJ7}). To describe our assumptions on the initial data $v_\ve(\cdot,0)$ we shall consider  functions $v_0:[0,x_\infty)\ra\R^+$ with the properties:
\begin{multline} \label{AA6}
{\rm The \  function \ } x\ra v_0(x) \ {\rm is \ increasing \ on \ the \  interval \ } [0,x_\infty) \\ 
{\rm and \ } v_0(0)>0.  \ 
{\rm If \ } x_\infty<\infty \ {\rm then \ } \lim_{x\ra x_\infty} v_0(x)=\infty \ .
\end{multline} 
\be \label{AB6}
 v_0(x_2)-v_0(x_1)  \ \le \ \int_{x_1}^{x_2} v_0(x)^2 \ dx \quad {\rm for \ } 0\le x_1<x_2<x_\infty \ .
\ee
Note that (\ref{AB6}) implies that $v_0(\cdot)$ is locally Lipschitz continuous in the interval $[0,x_\infty)$. 
\begin{lem}
Assume the function $ v_0(\cdot)=v_\ve(\cdot,0)$ satisfies (\ref{AA6}), (\ref{AB6}) and in addition that $(1+\del_0)v_\ve(0,0)<\ve^{-1}$ for some $\del_0>0$. Then there exists $\del_1>0$ depending only on $\del_0$ such that there is a unique solution to (\ref{BI7}), (\ref{BJ7}) for $0\le t\le T=\del_1/v_\ve(0,0)$.
\end{lem}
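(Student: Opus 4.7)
The plan is to recast the problem as a fixed-point equation for the scalar function $A(t)=1/\La_\ve(t)$ on a short interval $[0,T]$ and apply the contraction mapping theorem. For any candidate $A\in C([0,T];\R^+)$, the quantities $m_{1,A}(t)$, $m_{2,A}(t)$ from (\ref{B7}) and $\sig_A^2(t)$ are determined. Since $v_0$ is strictly increasing, the map $z\mapsto z+\ve(\sig_A^2(t)/m_{1,A}(t)^2)v_0(z)$ is strictly increasing, so the implicit equation (\ref{Y6}) with $x=0$ has at most one solution $z_A(t)\in[0,x_\infty)$; it has one provided its left side at $z=0$ is strictly below the right side. Granting solvability, I set
\[
\Phi(A)(t)\;=\;\frac{1}{m_{1,A}(t)}\,v_0(z_A(t)),
\]
so that by (\ref{BJ7}) and (\ref{Z6}) a fixed point of $\Phi$ corresponds to the function $1/\La_\ve(\cdot)$ associated with a solution of (\ref{BI7}), (\ref{BJ7}), and conversely.

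I would work on the closed ball $\mathcal{X}_T=\{A\in C([0,T]):\tfrac{1}{2}v_0(0)\le A(t)\le 2v_0(0)\}$ equipped with the sup norm, viewing the natural dimensionless time as $\tau=v_0(0)t$. With $T=\del_1/v_0(0)$ one has uniform expansions $m_{1,A}(t)=1+O(\del_1)$, $m_{2,A}(t)=t(1+O(\del_1))$, and $\sig_A^2(t)=t(1+O(\del_1))$ for $A\in\mathcal{X}_T$. The solvability condition for the implicit equation at $z=0$ then reduces to $\ve v_0(0)(1+O(\del_1))<1$, which follows from the hypothesis $(1+\del_0)\ve v_0(0)<1$ once $\del_1$ is small in terms of $\del_0$. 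Moreover $z_A(t)=t(1+O(\del_1))$, and by (\ref{AB6}) the Lipschitz constant of $v_0$ on the relevant interval $[0,z_A(T)]$ is bounded above by $(\sup_{[0,z_A(T)]}v_0)^2=O(v_0(0)^2)$. Combining these observations, $\Phi(A)(t)=v_0(0)(1+O(\del_1))$, so $\Phi(\mathcal{X}_T)\subset\mathcal{X}_T$ once $\del_1$ is chosen small.

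The essential step is proving contractivity. For $A_1,A_2\in\mathcal{X}_T$, since $m_{1,A_j}$, $m_{2,A_j}$ and $\sig^2_{A_j}$ are all integral functionals of $A_j$, elementary Gronwall-type estimates give
\[
\sup_{t\in[0,T]}\bigl(|m_{1,A_1}(t)-m_{1,A_2}(t)|+|m_{2,A_1}(t)-m_{2,A_2}(t)|+|\sig_{A_1}^2(t)-\sig_{A_2}^2(t)|\bigr)\;\le\;C(\del_0)\,T\,\|A_1-A_2\|_\infty.
\]
Differentiating the implicit equation with respect to the parameter $A$ and using that the $z$-derivative of its left side is bounded below by $1$, together with the Lipschitz control on $v_0$, yields $\|z_{A_1}-z_{A_2}\|_\infty\le C'(\del_0)\,T\,v_0(0)\,\|A_1-A_2\|_\infty$. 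Dividing by $m_{1,A_j}$ and invoking the Lipschitz control on $v_0$ one more time gives
\[
\|\Phi(A_1)-\Phi(A_2)\|_\infty\;\le\;K(\del_0)\,T\,v_0(0)\,\|A_1-A_2\|_\infty\;=\;K(\del_0)\,\del_1\,\|A_1-A_2\|_\infty.
\]
Choosing $\del_1=\del_1(\del_0)$ so that $K(\del_0)\del_1<1$ gives a strict contraction on $\mathcal{X}_T$, the Banach fixed point theorem produces the unique $A$, and (\ref{Z6}) reconstructs $v_\ve(\cdot,t)$ for all $x\ge 0$; non-intersection of characteristics is automatic because $v_0$ is increasing.

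The main technical obstacle is the last scaling point: showing that the contraction constant scales as $K(\del_0)\,\del_1$, so that the dependence on $v_0(0)$ cancels exactly with the factor $T=\del_1/v_0(0)$. This requires careful bookkeeping through the implicit function step, using the hypothesis $\ve v_0(0)<(1+\del_0)^{-1}$ to obtain uniform in time control of the quantity $\ve\sig_A^2(t)/m_{1,A}(t)^2$ that appears both in the denominator of the sensitivity of $z_A(t)$ with respect to the data and in the linearization of the fixed point map.
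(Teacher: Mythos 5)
Your proposal is correct and takes essentially the same route as the paper: the paper also sets up the problem as a Banach fixed point for the boundary value $V(t)=v_\ve(0,t)=1/\La_\ve(t)$ on a sup-norm ball of width $O(v_\ve(0,0))$ around $v_\ve(0,0)$, with the map sending $V$ to $v_0(z(t))/m_{1,V}(t)$ via the implicit characteristic equation (\ref{Y6}), and the contraction estimate is obtained by precisely the bookkeeping you describe (bounding $|m_{1,V_1}-m_{1,V_2}|$, $|m_{2,V_1}/m_{1,V_1}-m_{2,V_2}/m_{1,V_2}|$, hence $|z_1-z_2|$, all scaled by $T=\del_1/v_\ve(0,0)$ so the $v_\ve(0,0)$ factors cancel). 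The only cosmetic difference is that the paper pins $V(0)=v_\ve(0,0)$ in the function space, which is automatic for the fixed point anyway.
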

\begin{proof}
Let $T,\del_2>0$ and $\mathcal{E}$ be the space of  continuous functions $V:[0,T]\ra\R^+$ satisfying
\be \label{AC6}
V(0) \ = \ v_\ve(0,0), \quad (1+\del_2)^{-1}v_\ve(0,0) \ \le \ V(t) \ \le (1+\del_2) v_\ve(0,0) \quad {\rm for \ } 0\le t\le T. 
\ee
For $V\in\mathcal{E}$ we define a function $\mathcal{B}V(t), \ 0\le t\le T,$ by $\mathcal{B}V(t)=v_\ve(0,t)$ where $v_\ve$ is the function (\ref{Z6}) with $A(s)=V(s), \ 0\le s\le T$. We shall show that if  $T>0$ is sufficiently small then $V\in\mathcal{E}$ implies $\mathcal{B}V\in\mathcal{E}$.  To see this we first observe from (\ref{Y6}) that $\mathcal{B}V(0)=v_\ve(0,0)$. Next we note that for a function $v_0(\cdot)$ satisfying (\ref{AA6}), (\ref{AB6}), then
\be \label{AD6}
v_0(0) \ \le \ v_0(z)  \le \frac{v_0(0)}{1-zv_0(0)} \ , \quad  {\rm for \ } 0 \ \le z \ < 1/v_0(0) \ .
\ee
Since $V\in\mathcal{E}$, it follows from (\ref{AC6}) that with $A(\cdot)=V(\cdot)$, then
\be \label{AE6}
t\exp\left[-(1+\del_2)v_\ve(0,0)t\right] \ \le \ \frac{m_{2,A}(t)}{m_{1,A}(t)} \ \le \ t \ , \quad 0\le t\le T.
\ee
Similarly we have that
\be \label{AF6}
t\exp\left[-2(1+\del_2)v_\ve(0,0)t\right] \ \le \ \frac{\sig_A^2(t)}{m_{1,A}(t)^2} \ \le \ t \ , \quad 0\le t\le T.
\ee
From (\ref{AE6}), (\ref{AF6}) we have that for $\del_1,\del_2>0$ sufficiently small, depending only on $\del_0$,  that
\be \label{AG6}
\ve \frac{\sig_A^2(t)}{m_{1,A}(t)^2}v_\ve(0,0) \ \le \   \frac{m_{2,A}(t)}{m_{1,A}(t)} \quad {\rm for \ }  0\le t\le T.
\ee
Hence there is a unique solution $z(t)\le m_{2,A}(t)/m_{1,A}(t)$ to (\ref{Y6}) with $x=0$ provided $0\le t\le T$.   Since 
$\mathcal{B}V(t) \ = \ v_\ve(z(t),0)/m_{1,A}(t)$, it follows from  (\ref{AD6}), (\ref{AE6}) that on choosing $\del_1>0$ sufficiently small, depending only on  $\del_2$, that the function $\mathcal{B}V(t), \ 0\le t\le T$, also satisfies (\ref{AC6}). 

Next we show that $\mathcal{B}$ is a contraction on the space $\mathcal{E}$ with metric $d(V_1,V_2)=\sup_{0\le t\le T}|V_2(t)-V_1(t)|$ for $V_1,V_2\in\mathcal{E}$. To see this let $z_1(t), \ z_2(t), \ 0\le t\le T,$ be the solutions to (\ref{Y6}) with $x=0$ corresponding to $V_1,V_1\in\mathcal{E}$ respectively.   Then from (\ref{Z6}) we have
\be \label{AH6}
\mathcal{B}V_2(t)-\mathcal{B}V_1(t) \ = \  \frac{v_\ve(z_2(t),0)-v_\ve(z_1(t),0)}{m_{1,V_2}(t)}+
\left[\frac{1}{m_{1,V_2}(t)}-\frac{1}{m_{1,V_1}(t)}\right]v_\ve(z_1(t),0) \ .
\ee
The second term on the RHS of  (\ref{AH6}) is bounded as
\begin{multline} \label{AI6}
\left| \ \left[\frac{1}{m_{1,V_2}(t)}-\frac{1}{m_{1,V_1}(t)}\right]v_\ve(z_1(t),0) \ \right| \ \le \\
  (1+\del_2)v_\ve(0,0)\int_0^t |V_2(s)-V_1(s)| \ ds \ , \quad 0\le t\le T.
\end{multline}
We use (\ref{AB6}) to bound the first term in (\ref{AH6}).  Thus we have that
\be \label{AJ6}
\left| \ \frac{v_\ve(z_2(t),0)-v_\ve(z_1(t),0)}{m_{1,V_2}(t)} \ \right| \ \le \  (1+\del_2)^2v_\ve(0,0)^2 |z_2(t)-z_1(t)| \ , \quad 0\le t\le T.
\ee
From (\ref{Y6}), (\ref{AA6}) it follows that
\be \label{AK6}
|z_2(t)-z_1(t)| \ \le \ \left| \ \frac{m_{2,V_2}(t)}{m_{1,V_2}(t)}-\frac{m_{2,V_1}(t)}{m_{1,V_1}(t)} \ \right| \ , \quad 0\le t\le T.
\ee
The RHS of (\ref{AK6}) can be bounded  similarly to  (\ref{AI6}), and so we obtain the inequality
\be \label{AL6}
|z_2(t)-z_1(t)| \ \le \ t\int_0^t |V_2(s)-V_1(s)| \ ds \ , \quad 0\le t\le T.
\ee
It follows from (\ref{AH6})-(\ref{AL6}) that
\be \label{AM6}
|\mathcal{B}V_2(t)-\mathcal{B}V_1(t) | \ \le \ 10 \del_1\sup_{0\le t\le T}|V_2(t)-V_1(t)| \ , \quad 0\le t\le T=\del_1/v_\ve(0,0),
\ee
provided $\del_1>0$ is chosen sufficiently small depending only on $\del_2$.  Evidently $\mathcal{B}$ is a contraction mapping on $\mathcal{E}$ and therefore has a unique fixed point  if one also has $10\del_1<1$. 
\end{proof}
\begin{lem}
Let $v_\ve(x,t), \ x\ge0, \ 0\le t\le T$ be the solution to (\ref{BI7}), (\ref{BJ7}) constructed in Lemma 3.1. Then for any $t$ satisfying $0< t\le T$ the function  $v_0(\cdot)=v_\ve(\cdot,t)$ satisfies (\ref{AA6}), (\ref{AB6}) with $x_\infty=\infty$.  In addition  the function $t\ra v_\ve(0,t), \ 0\le t\le T,$ is continuous and decreasing. 
\end{lem}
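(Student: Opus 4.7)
The plan is to read everything off the characteristic representation (\ref{Y6})--(\ref{Z6}) specialised to $A(s) = 1/\La_\ve(s) = v_\ve(0,s)$. Write $m_i(t) = m_{i,A}(t)$ for $i=1,2$, set $\sig^2(t) = \sig^2_A(t)$, and let $z = z(x,t) := F_{\ve,A}(x,t,v_\ve(\cdot,0))$, so that (\ref{Z6}) reads $v_\ve(x,t) = v_\ve(z,0)/m_1(t)$ while $z$ is determined implicitly from (\ref{Y6}) by
\[
z + \ve\,\frac{\sig^2(t)}{m_1(t)^2}\,v_\ve(z,0) \ = \ \frac{x + m_2(t)}{m_1(t)}.
\]

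\textbf{Preservation of (\ref{AA6}) and (\ref{AB6}).} Because $v_\ve(\cdot,0)$ is increasing on $[0,x_\infty)$ with $v_\ve(0,0) > 0$, the left side of the displayed equation is continuous, strictly increasing in $z$, and blows up as $z \to x_\infty^-$ (via $v_\ve(z,0) \to \infty$ if $x_\infty < \infty$, and via $z \to \infty$ otherwise). Hence for every $x \ge 0$ there is a unique solution $z(x,t) \in [0,x_\infty)$; this yields the new $x_\infty = \infty$ claimed in the statement. Implicit $x$-differentiation gives
\[
\frac{\pa z}{\pa x} \ = \ \frac{1}{m_1(t)\bigl(1 + \ve\,\sig^2(t)\,\pa_x v_\ve(z,0)/m_1(t)^2\bigr)} \ \in \ \bigl(0,\,m_1(t)^{-1}\bigr],
\]
so $z(\cdot,t)$, and therefore $v_\ve(\cdot,t) = v_\ve(z,0)/m_1(t)$, is strictly increasing, with $v_\ve(0,t) \ge v_\ve(0,0)/m_1(t) > 0$. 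For (\ref{AB6}) at time $t$ I pick $0 \le x_1 < x_2$, set $z_i = z(x_i,t)$, apply (\ref{AB6}) at time $0$ on $[z_1,z_2]$, and change variables back to $x$ using the Jacobian bound $dz \le dx/m_1(t)$:
\[
v_\ve(x_2,t) - v_\ve(x_1,t) \ = \ \frac{v_\ve(z_2,0) - v_\ve(z_1,0)}{m_1(t)} \ \le \ \frac{1}{m_1(t)} \int_{z_1}^{z_2} v_\ve(z,0)^2 \, dz \ \le \ \int_{x_1}^{x_2} v_\ve(x,t)^2 \, dx.
\]

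\textbf{Continuity and monotonicity of $v_\ve(0,\cdot)$.} Continuity is already built into Lemma 3.1, since the fixed point of $\mathcal{B}$ lives in the space $\mathcal{E}$ of continuous functions and equals $v_\ve(0,\cdot)$. For monotonicity, implicit $t$-differentiation of the displayed equation at $x = 0$, using $\pa_t m_1 = v_\ve(0,t)\,m_1$, the identity $\pa_t(m_2/m_1) = 1/m_1$ from (\ref{F7}), and $\pa_t(\sig^2/m_1^2) = 1/m_1^2$, eliminates $z'(t)$ and, upon forming $d[v_\ve(z(0,t),0)/m_1(t)]/dt$, yields
\[
\frac{d}{dt} v_\ve(0,t) \ = \ -v_\ve(0,t)^2 \ + \ \frac{\pa_x v_\ve(z(0,t),0)\,(1 - \ve v_\ve(0,t))/m_1(t)^2}{1 + \ve\,\sig^2(t)\,\pa_x v_\ve(z(0,t),0)/m_1(t)^2}.
\]
The differential form of (\ref{AB6}) at time $0$ reads $\pa_x v_\ve(z,0) \le v_\ve(z,0)^2 = m_1(t)^2 v_\ve(0,t)^2$ a.e.; together with $1 - \ve v_\ve(0,t) > 0$, guaranteed by the hypothesis $(1+\del_0)v_\ve(0,0) < \ve^{-1}$ of Lemma 3.1, this bounds the fraction by $v_\ve(0,t)^2(1 - \ve v_\ve(0,t))$, so $dv_\ve(0,t)/dt \le -\ve v_\ve(0,t)^3 < 0$.

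\textbf{Main obstacle.} The only delicate step is arranging the algebra in the monotonicity computation so that the Lipschitz bound $v_0' \le v_0^2$ from (\ref{AB6}) cancels exactly against the $v_\ve(0,t)^2$ that drops out of $\pa_t m_1$, leaving the clean negative residue $-\ve v_\ve(0,t)^3$. Conceptually the same monotonicity is the $\nu = 0$ specialisation of (\ref{BV7}) combined with $\Ga_\ve(\cdot,t) \ge 0$ (the latter being just the differential form of the (\ref{AB6}) preservation above), but the implicit-differentiation route is preferable since it sidesteps having to justify the sign of $\pa\Ga_\ve(0,t)/\pa x$ independently.
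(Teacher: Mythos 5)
Your proof is correct and follows essentially the same route as the paper's: preservation of (\ref{AA6})--(\ref{AB6}) by the identical change-of-variables argument (cf.\ the paper's (\ref{AN6})--(\ref{AO6})), and monotonicity of $v_\ve(0,\cdot)$ by the same implicit $t$-differentiation of (\ref{Y6}) (cf.\ (\ref{AP6})--(\ref{AS6})), merely with the terms regrouped. Your regrouping in fact yields the quantitatively sharper bound $dv_\ve(0,t)/dt \le -\ve\, v_\ve(0,t)^3$, and the worry about the sign of $1-\ve v_\ve(0,t)$ is moot since if it were nonpositive your fraction would be $\le 0$ outright; the paper's grouping in (\ref{AR6}) sidesteps that sign entirely.
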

\begin{proof}
Since $\ve>0$ it follows from the fact that (\ref{AA6})  holds for $v_0(\cdot)=v_\ve(\cdot,0)$ that (\ref{Y6}) has a unique solution $z<x_\infty$ for any $x>0$. Hence $x_\infty=\infty$ if $t>0$,  and it is also clear that the function $x\ra v_\ve(x,t), \ x\ge0,$ is increasing. We have therefore shown that (\ref{AA6}) holds for $v_0(\cdot)=v_\ve(\cdot,t)$ and $x_\infty=\infty$  if $0<t\le T$. 

Next we wish to show that (\ref{AB6}) holds for $v_0(\cdot)=v_\ve(\cdot,t)$ with $0<t\le T$. To see this we observe from (\ref{Z6}), (\ref{AB6}) that for $0\le x_1\le x_2<\infty$, 
\be \label{AN6}
v_\ve(x_2,t)-v_\ve(x_1,t) \ = \ \frac{v_\ve(z(x_2,t),0)-v_\ve(z(x_1,t),0)}{m_{1,1/\La_\ve}(t)} \ \le   \
\frac{1}{m_{1,1/\La_\ve}(t)} \int_{z(x_1,t)}^{z(x_2,t)} v_\ve(z,0)^2 \ dz \ ,
\ee
where $z(x,t)=F_{\ve,1/\La_\ve}(x,t,v_\ve(\cdot,0))$. We see from (\ref{Y6}) that $0\le \pa z(x,t)/\pa x \le 1/m_{1,1/\La_\ve}(t)$, whence
\be \label{AO6}
\frac{1}{m_{1,1/\La_\ve}(t)} \int_{z(x_1,t)}^{z(x_2,t)} v_\ve(z,0)^2 \ dz \ = \ m_{1,1/\La_\ve}(t)\int_{x_1}^{x_2} v_\ve(x,t)^2 \frac{\pa z(x,t)}{\pa x} \ dx \ \le \ 
\int_{x_1}^{x_2} v_\ve(x,t)^2 \ dx \ .
\ee

To show that the function $t\ra v_\ve(0,t)$ is continuous and decreasing we write $v_\ve(0,t)=v_0(z(t))/m_{1,1/\La_\ve}(t)$ where $v_0(\cdot)=v_\ve(\cdot,0)$ and $z(t)$ is the solution $z$ to (\ref{Y6}) with $x=0$. We see from (\ref{Y6}) that the function $t\ra z(t)$ is Lipschitz continuous, whence the function $t\ra v_\ve(0,t)$ is continuous.  If $z\ra v_0(z)$ is differentiable at $z=z(t)$ then it follows from (\ref{B7})  that
\be \label{AP6}
\frac{ \pa v_\ve(0,t)}{\pa t} \ = \  \frac{1}{m_{1,1/\La_\ve}(t)}\left[ v_0'(z(t))\frac{dz(t)}{dt}-\frac{v_0(z(t))}{\La_\ve(t)}\right] \ .
\ee
Differentiating (\ref{Y6}) with respect to $t$ at $x=0$ we obtain the equation
\be \label{AQ6}
\left[1+\ve \frac{\sig_{1/\La_\ve}^2(t)}{m_{1,1/\La_\ve}(t)^2}v'_0(z(t))\right]\frac{dz(t)}{dt}  \ = \  \frac{1}{m_{1,1/\La_\ve}(t)}\left[1-\frac{\ve v_0(z(t))}{m_{1,1/\La_\ve}(t)}\right] \ .
\ee
Hence (\ref{AP6}), (\ref{AQ6}) imply that
\begin{multline} \label{AR6}
\left[1+\ve \frac{\sig_{1/\La_\ve}^2(t)}{m_{1,1/\La_\ve}(t)^2}v'_0(z(t))\right] m_{1,1/\La_\ve}(t)\frac{ \pa v_\ve(0,t)}{\pa t} \ = \\
 \frac{v'_0(z(t))}{m_{1,1/\La_\ve}(t)}-\frac{v_0(z(t))}{\La_\ve(t)}-\frac{\ve v_0'(z(t))v_0(z(t))}{m_{1,1/\La_\ve}(t)^2}\left[1+\frac{\sig_{1/\La_\ve}^2(t)}{\La_\ve(t)}\right] \ .
\end{multline}
From (\ref{BJ7}), (\ref{Z6}), (\ref{AB6}) we have that
\be \label{AS6}
\frac{v'_0(z(t))}{m_{1,1/\La_\ve}(t)} \ \le \ \frac{v_0(z(t))^2}{m_{1,1/\La_\ve}(t)} \ = \ v_0(z(t))v_\ve(0,t) \ = \ \frac{v_0(z(t))}{\La_\ve(t)} \ .
\ee 
We conclude from (\ref{AR6}), (\ref{AS6}) that $\pa v_\ve(0,t)/\pa t\le 0$. In the case when the function $z\ra v_0(z)$ is not differentiable at $z=z(t)$, we can do an approximation argument to see that the function $s\ra v_\ve(0,s)$ is decreasing close to $s=t$.  We have therefore shown that the function $t\ra v_\ve(0,t)$ is decreasing, whence $v_\ve(0,t)\le v_\ve(0,0)< \ve^{-1}$ for $0\le t\le T$. Since the RHS of (\ref{AQ6}) is the same as  $[1-\ve v_\ve(0,t)]/m_{1,1/\La_\ve}(t)$ this implies that the function $t\ra z(t)$ is increasing. 
\end{proof}
\begin{proposition}
Assume the initial data for  (\ref{BI7}), (\ref{BJ7}) satisfies the conditions of Lemma 3.1. Then there exists a unique continuous solution $v_\ve(x,t), \ x,t\ge 0,$ globally in time to (\ref{BI7}), (\ref{BJ7}).  The solution $v_\ve(\cdot,t)$ satisfies (\ref{AA6}), (\ref{AB6}) for $t>0$  with $x_\infty=\infty$, and the function $t\ra v_\ve(0,t)$ is  decreasing.  Furthermore there is a constant $C(\del_0)$ depending only on $\del_0$ such that $\La_\ve(t)\le \La_\ve(0)+C(\del_0)[\La_\ve(0)+t], \ t\ge 0$. 
\end{proposition}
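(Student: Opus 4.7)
The plan is to iterate Lemma 3.1 starting from $t_0=0$. By Lemma 3.1 there is a unique solution on $[0,T_0]$ with $T_0=\del_1/v_\ve(0,0)$, where $\del_1=\del_1(\del_0)>0$. By Lemma 3.2 applied on $[0,T_0]$, the endpoint function $v_\ve(\cdot,T_0)$ satisfies (\ref{AA6}), (\ref{AB6}) with $x_\infty=\infty$, and $v_\ve(0,T_0)\le v_\ve(0,0)$. In particular $(1+\del_0)v_\ve(0,T_0)<\ve^{-1}$, so Lemma 3.1 is applicable again at $t=T_0$ with the same constant $\del_0$. Inductively, this produces a sequence $t_0<t_1<\cdots$ with $t_{n+1}-t_n=\del_1/v_\ve(0,t_n)$ and a solution on $[t_n,t_{n+1}]$ whose initial value at $t_n$ agrees with the endpoint of the previous step, so the glued function is continuous. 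Since $t\mapsto v_\ve(0,t)$ is decreasing by Lemma 3.2 on each subinterval, we have $v_\ve(0,t_n)\le v_\ve(0,0)$, whence $t_{n+1}-t_n\ge T_0$. Thus $t_n\to\infty$, and the iteration yields a global solution. Global uniqueness follows by induction from uniqueness on each interval in Lemma 3.1; properties (\ref{AA6}), (\ref{AB6}) with $x_\infty=\infty$ for $t>0$ and global monotonicity of $t\mapsto v_\ve(0,t)$ follow from Lemma 3.2 applied on each subinterval (continuity at the $t_n$ guarantees the monotonicity does not jump).

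For the growth bound on $\La_\ve(t)=1/v_\ve(0,t)$, I would use the quantitative bound built into the fixed-point space $\mathcal{E}$ in Lemma 3.1: there is $\del_2=\del_2(\del_0)$ such that $v_\ve(0,t)\ge (1+\del_2)^{-1}v_\ve(0,t_n)$, equivalently $\La_\ve(t)\le (1+\del_2)\La_\ve(t_n)$, for $t\in[t_n,t_{n+1}]$. Taking $t=t_{n+1}$ and using $t_{n+1}-t_n=\del_1\La_\ve(t_n)$ gives
\begin{equation*}
\La_\ve(t_{n+1})-\La_\ve(t_n)\ \le\ \del_2\La_\ve(t_n)\ =\ (\del_2/\del_1)(t_{n+1}-t_n),
\end{equation*}
and summing telescopically yields $\La_\ve(t_n)\le \La_\ve(0)+(\del_2/\del_1)t_n$. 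For arbitrary $t\in[t_n,t_{n+1}]$,
\begin{equation*}
\La_\ve(t)\ \le\ (1+\del_2)\La_\ve(t_n)\ \le\ (1+\del_2)\bigl[\La_\ve(0)+(\del_2/\del_1)t\bigr]\ \le\ \La_\ve(0)+C(\del_0)\bigl[\La_\ve(0)+t\bigr]
\end{equation*}
with $C(\del_0)=\max\bigl(\del_2,\,(1+\del_2)\del_2/\del_1\bigr)$, which is the asserted bound.

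The only nontrivial point is verifying that the step sizes $t_{n+1}-t_n$ cannot shrink to zero, since otherwise the iteration would accumulate at a finite time. This is exactly where the monotonicity $v_\ve(0,t_n)\le v_\ve(0,0)$ from Lemma 3.2 is essential: it forces $t_{n+1}-t_n\ge T_0>0$ uniformly in $n$. Once this is in hand, every other claim in the proposition is a direct consequence of Lemmas 3.1 and 3.2 applied on successive intervals, together with the telescoping computation above for the linear-in-$t$ growth of $\La_\ve$.
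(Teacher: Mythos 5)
Your proof is correct and follows essentially the same approach as the paper: iterate Lemma 3.1 to produce a sequence of times $T_k$ with $T_k-T_{k-1}=\del_1\La_\ve(T_{k-1})$, use the monotonicity of $t\mapsto v_\ve(0,t)$ from Lemma 3.2 to guarantee the step sizes do not degenerate, and use the $(1+\del_2)$--bound from the fixed-point space $\mathcal{E}$ to control the growth of $\La_\ve$ on each subinterval. Your telescoping sum is a slightly cleaner packaging of the same quantitative estimate that the paper obtains via its chain (\ref{AT6})--(\ref{AV6}).
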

\begin{proof}
The global existence and uniqueness follows immediately from Lemma 3.1, 3.2  upon using the fact that the function  $t\ra v_\ve(0,t)$ is decreasing.  To get the upper bound on the function $\La_\ve(\cdot)$ we observe that Lemma 3.1 implies that  with $T_0=0,$
\be \label{AT6}
\La_\ve(t) \ \le \  (1+\del_2)\La_\ve(T_{k-1}) \  {\rm for \ } T_{k-1} \ \le \ t  \ \le \ T_k, \quad T_k=T_{k-1}+\del_1\La_\ve(T_{k-1}) , \ k=1,2,... 
\ee
It follows from (\ref{AT6}) that
\be \label{AU6}
\La_\ve(t) \ \le \ (1+\del_2)(T_k-T_{k-1})/\del_1 \quad {\rm for  \ } T_{k-1}\le t\le T_k \ , \quad k=1,2,...
\ee
We also have  that
\be \label{AV6}
T_k-T_{k-1} \ \le \del_1(1+\del_2)\La_\ve(T_{k-2}) \ \le \ (1+\del_2)T_{k-1} \ , \quad k=2,3,..
\ee
From (\ref{AU6}), (\ref{AV6}) we conclude that $\La_\ve(t) \ \le (1+\del_2)^2t/\del_1$ provided  $t\ge T_1$, whence the result follows. 
\end{proof}

The upper bound on the coarsening rate implied by Proposition 3.1 is independent of $\ve>0$ as $\ve\ra 0$.  We can see from (\ref{AR6}) that a lower bound on the rate of coarsening depends on $\ve$. In fact if we choose $v_0(z)=1/[\La_\ve(0)-z], \ 0<z<\La_\ve(0),$ then $v'_0(z)=v_0(z)^2$, and so at $t=0$ the RHS of (\ref{AR6}) is zero if $\ve=0$.  The random variable $X_0$ corresponding to this initial data is simply $X_0\equiv {\rm constant}$, and it is easy to see that if $\ve =0$ then $X_t\equiv X_0$ for all $t>0$. For $\ve>0$ however, we have the following:
\begin{lem}
Assume the initial data for  (\ref{BI7}), (\ref{BJ7}) satisfies the conditions of Lemma 3.1 and let $z(t)$ be as in Lemma 3.2. Then $\lim_{t\ra\infty} z(t)=x_\infty$ and if   $\ve>0$ one has $\lim_{t\ra\infty}\La_\ve(t)=\infty$.
\end{lem}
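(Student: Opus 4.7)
The claim splits naturally into two parts. First, Lemma 3.2 tells us that $z(t)$ is increasing and bounded above by $x_\infty$ (since it lies in the domain of $v_0$), so it tends to some limit $z_\infty\le x_\infty$; the goal is to contradict $z_\infty<x_\infty$. Second, assuming $\ve>0$, I will read off a differential inequality for $v_\ve(0,t)$ directly from the PDE (\ref{BI7}) and integrate it.

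For the first part, the key observation is that writing $M(t)=m_{1,1/\La_\ve}(t)$ and using the constraint (\ref{BJ7}) gives
\[
M'(t)\ =\ \frac{M(t)}{\La_\ve(t)}\ =\ v_0(z(t)) \ ,
\]
the last equality coming from $v_\ve(0,t)=v_0(z(t))/M(t)=1/\La_\ve(t)$. Combined with the representation $\sig_{1/\La_\ve}^2(t)/M(t)^2=\int_0^t ds/M(s)^2$, equation (\ref{Y6}) at $x=0$ becomes
\[
\int_0^t\frac{ds}{M(s)}\ =\ z(t)+\ve\,v_0(z(t))\int_0^t\frac{ds}{M(s)^2}.
\]
If $z_\infty<x_\infty$, then by (\ref{AA6}) we have $v_0(0)\le v_0(z(s))\le V:=v_0(z_\infty)<\infty$, hence $1+v_0(0)s\le M(s)\le 1+Vs$. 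The left hand side is then bounded below by $V^{-1}\log(1+Vt)\to\infty$, while the right hand side is bounded above by $z_\infty+\ve V/v_0(0)<\infty$; this contradiction forces $z_\infty=x_\infty$.

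For the second part, evaluating (\ref{BI7}) at $x=0$ and substituting the constraint $v_\ve(0,t)=1/\La_\ve(t)$ gives
\[
\frac{\pa v_\ve(0,t)}{\pa t}\ =\ [1-\ve v_\ve(0,t)]\frac{\pa v_\ve(0,t)}{\pa x}-v_\ve(0,t)^2.
\]
By Lemma 3.2, $v_\ve(\cdot,t)$ itself satisfies (\ref{AB6}), whose differential form is $\pa_x v_\ve(x,t)\le v_\ve(x,t)^2$; and the hypotheses together with the monotonicity in Lemma 3.2 give $v_\ve(0,t)\le v_\ve(0,0)<\ve^{-1}$, so $1-\ve v_\ve(0,t)>0$. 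Combining,
\[
\frac{\pa v_\ve(0,t)}{\pa t}\ \le\ [1-\ve v_\ve(0,t)]v_\ve(0,t)^2-v_\ve(0,t)^2\ =\ -\ve\,v_\ve(0,t)^3,
\]
which on multiplying by $-2v_\ve(0,t)^{-3}$ and integrating yields $\La_\ve(t)^2=v_\ve(0,t)^{-2}\ge\La_\ve(0)^2+2\ve t$, so $\La_\ve(t)\to\infty$ at rate at least $\sqrt{2\ve t}$.

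The main obstacle is regularity: (\ref{AB6}) only makes $v_0$ locally Lipschitz, so the inequality $\pa_x v_\ve(0,t)\le v_\ve(0,t)^2$ holds only for a.e.\ $t$. This is enough however, since the characteristic formula (\ref{Z6}) and the Lipschitz continuity of $z(\cdot)$ and $M(\cdot)$ show that $v_\ve(0,\cdot)$ is absolutely continuous in $t$, so the differential inequality integrates correctly. As a backup argument that avoids the PDE altogether, one can recover $\La_\ve\to\infty$ directly from the displayed identity above and Part 1: boundedness $\La_\ve\le L$ would force $M(t)\ge e^{t/L}$, making both $\int_0^t ds/M(s)$ and $\int_0^t ds/M(s)^2$ bounded, which contradicts $z(t)\to x_\infty$ in either the case $x_\infty=\infty$ (left hand side forced to $\infty$ by $z(t)\to\infty$) or $x_\infty<\infty$ (right hand side forced to $\infty$ by $\ve v_0(z(t))\to\infty$).
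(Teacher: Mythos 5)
Your proof is correct, and it takes a genuinely different and cleaner route than the paper's. The paper argues by cases on $\La_{\ve,\infty}:=\lim_{t\ra\infty}\La_\ve(t)$: it first rules out $\La_{\ve,\infty}<\infty$ for $\ve>0$ by a contradiction in which $z(t)\ra x_\infty$ appears as an intermediate step, and then runs a second, more technical argument via (\ref{AX6})--(\ref{BA6}) to obtain $z(t)\ra x_\infty$ in the remaining case $\La_{\ve,\infty}=\infty$. You instead prove $z(t)\ra x_\infty$ directly and unconditionally from the identity
\[
\int_0^t\frac{ds}{M(s)}\ =\ z(t)+\ve\,v_0(z(t))\int_0^t\frac{ds}{M(s)^2},\qquad M'(s)=v_0(z(s)),\ \ M(0)=1,
\]
which follows immediately from (\ref{Y6}), (\ref{Z6}), (\ref{BJ7}); under the assumption $z(t)\ra z_\infty<x_\infty$ the two-sided bound $1+v_0(0)s\le M(s)\le 1+v_0(z_\infty)s$ makes the left side diverge while the right side stays bounded by $z_\infty+\ve\,v_0(z_\infty)/v_0(0)$, replacing the whole chain (\ref{AX6})--(\ref{BA6}) at one stroke. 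Your second part, the pointwise inequality $\pa_t v_\ve(0,t)\le-\ve\,v_\ve(0,t)^3$ obtained from (\ref{BI7}) at $x=0$ together with (\ref{BJ7}) and (\ref{AB6}), is essentially the content of the paper's (\ref{AR6})--(\ref{AS6}) in disguise, but it also yields the quantitative rate $\La_\ve(t)^2\ge\La_\ve(0)^2+2\ve t$ (more than the lemma asserts), and your backup argument bypasses the one delicate point (that $\pa_x v_\ve(0,t)$ need not exist for every $t$ when $v_0$ is merely locally Lipschitz, an issue the paper also addresses by an approximation remark after (\ref{AS6})) since it never differentiates. Overall your structure has the advantage of decoupling the two conclusions: $z(t)\ra x_\infty$ is established without any reference to the eventual size of $\La_\ve(\cdot)$.
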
 
\begin{proof}
Let  $\lim_{t\ra\infty}\La_\ve(t)=\La_{\ve,\infty}$ and assume first that $\La_{\ve,\infty}<\infty$. In that case  $\lim_{t\ra\infty} m_{1,\La_\ve}(t)=\infty$, and hence (\ref{BJ7}), (\ref{Z6}) imply that $\lim_{t\ra\infty}v_0(z(t))=\infty$. We conclude from (\ref{AA6}), (\ref{AB6}) that if $\La_{\ve,\infty}<\infty$ then $\lim_{t\ra\infty} z(t)=x_\infty$ . We also have from (\ref{B7}) that
\be \label{AW6}
\limsup_{t\ra\infty} \frac{m_{2,1/\La_\ve}(t)}{m_{1,1/\La_\ve}(t)} \ \le \  \La_{\ve,\infty} \ , \quad \liminf_{t\ra\infty} \frac{\sig_{1/\La_\ve}^2(t)}{m_{1,1/\La_\ve}(t)^2} \ \ge \ \frac{\La_\ve(0)}{2} \ .
\ee
It follows now from (\ref{Y6}), (\ref{AW6}) that if $\ve>0$ then $\limsup_{t\ra\infty} v_0(z(t))\le 2\La_{\ve,\infty}/\ve\La_\ve(0)<\infty$, which yields a contradiction. 

Next we assume that $\La_{\ve,\infty}=\infty$, which we have just shown always holds if $\ve>0$.   The function $t\ra z(t)$ is increasing, and let us suppose that $\lim_{t\ra\infty} z(t)=z_\infty<x_\infty$. Then from  (\ref{BJ7}), (\ref{Z6}) we have that $\lim_{t\ra\infty} m_{1,1/\La_\ve}(t)=\infty$.  We use the fact that for any $T\ge 0,$ there exists  a constant $K_T$ such that
\be \label{AX6}
\frac{\sig_{1/\La_\ve}^2(t)}{m_{1,1/\La_\ve}(t)^2} \ \le \ \frac{m_{2,1/\La_\ve}(t)}{m_{1,1/\La_\ve}(T)m_{1,1/\La_\ve}(t)}+K_T, \quad {\rm for \ } t\ge T.
\ee
From (\ref{Y6}),  (\ref{AX6}) we obtain the inequality
\be \label{AY6}
\frac{m_{2,1/\La_\ve}(t)}{m_{1,1/\La_\ve}(t)} \ \le \ z_\infty+\ve\left[  \frac{m_{2,1/\La_\ve}(t)}{m_{1,1/\La_\ve}(T)m_{1,1/\La_\ve}(t)}+K_T \right]v_0(z_\infty) \quad {\rm if \ }t\ge T. 
\ee
Choosing $T$ sufficiently large so that $m_{1,1/\La_\ve}(T)\ge 2\ve$, we  conclude from (\ref{AX6}), (\ref{AY6}) that there is a constant $C_1$ such that  $\sig_{1/\La_\ve}^2(t)/m_{1,1/\La_\ve}(t)^2\le C_1$ for $t\ge T$.  If we also choose $T$ such that $m_{1,1/\La_\ve}(t)\ge \ve v_0(z(t))/2$ for $t\ge T$ we have from (\ref{AB6}), (\ref{AQ6})  that 
\be \label{AZ6}
\left[1+C_1\ve v_0(z_\infty)^2\right]\frac{dz(t)}{dt} \ \ge \  \frac{1}{2m_{1,1/\La_\ve}(t)} \  \quad {\rm for \ } t\ge T. 
\ee
Since the function $t\ra z(t)$ is increasing and $\lim_{t\ra\infty} z(t)=z_\infty<\infty$, it follows from (\ref{BJ7}), (\ref{Z6}), (\ref{AZ6}) that there is a constant $C_2$ such that 
\be \label{BA6}
\int_0^t \frac{ds}{\La_\ve(s)} \ \le \ v_0(z_\infty)\int_0^t \ \frac{ds}{m_{1,1/\La_\ve}(s)} \ \le C_2 \quad {\rm for \ } t  \ge 0. 
\ee
However (\ref{BA6}) implies that $\lim_{t\ra\infty} m_{1,1/\La_\ve}(t)\le \exp[C_2]$ and so we have again a contradiction.  We conclude that $\lim_{t\ra \infty} z(t)=x_\infty$. 
\end{proof}
\begin{lem}
Assume the initial data for  (\ref{BI7}), (\ref{BJ7}) satisfies the conditions of Lemma 3.1  with $x_\infty=\infty$, and  that for $0<\del\le 1$, one has
\be \label{BB6}
\limsup_{x\ra\infty} \frac{v_\ve\left(x+\del/v_\ve(x,0),0\right)}{v_\ve(x,0)} \ \le \ 1+\ga(\del), \quad {\rm where \ } 
\lim_{\del\ra 0}\frac{\ga(\del)}{\del}=0.
\ee
Then $\lim_{t\ra\infty} \La_\ve(t)/t=1$ for any $\ve\ge 0$. 
\end{lem}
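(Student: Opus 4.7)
The plan is to establish the bounds $\limsup_{t\to\infty}\Lambda_\varepsilon(t)/t\le 1$ and $\liminf_{t\to\infty}\Lambda_\varepsilon(t)/t\ge 1$ separately. For the upper bound I combine (\ref{AR6}) with the identities $v_\varepsilon(0,t)=1/\Lambda_\varepsilon(t)$ from (\ref{BJ7}) and $v_0(z(t))=m_{1,1/\Lambda_\varepsilon}(t)/\Lambda_\varepsilon(t)$ implicit in (\ref{Z6}) to derive the explicit ODE
\[
\frac{d\Lambda_\varepsilon(t)}{dt}\;=\;1\;-\;\frac{v_0'(z(t))}{v_0(z(t))^2}\cdot\frac{1-\varepsilon/\Lambda_\varepsilon(t)}{1+\varepsilon\,\sigma_{1/\Lambda_\varepsilon}^{2}(t)\,v_0'(z(t))/m_{1,1/\Lambda_\varepsilon}(t)^{2}}.
\]
Since (\ref{AB6}) applied to $v_0$ gives $v_0'(z)/v_0(z)^2\in[0,1]$ and $\Lambda_\varepsilon(t)\ge\Lambda_\varepsilon(0)>\varepsilon$ places the second factor in $(0,1]$, one deduces $0\le d\Lambda_\varepsilon/dt\le 1$, hence $\Lambda_\varepsilon(t)\le t+\Lambda_\varepsilon(0)$.

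For the lower bound I exploit the representation $\Lambda_\varepsilon(t)=m_{1,1/\Lambda_\varepsilon}(t)/v_0(z(t))$ combined with the identity $m'_{1,1/\Lambda_\varepsilon}(t)=v_0(z(t))$, which integrates to $m_{1,1/\Lambda_\varepsilon}(t)=1+\int_0^t v_0(z(s))\,ds$. Fix $\delta\in(0,1]$ small and define $s_*(t)\in(0,t)$ implicitly by $t-s_*(t)=\delta\Lambda_\varepsilon(s_*(t))$; the upper bound above gives $s_*(t)\ge (t-\delta\Lambda_\varepsilon(0))/(1+\delta)\to\infty$, so $z(s_*(t))\to\infty$ by Lemma 3.3. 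For $s\in[s_*(t),t]$, equation (\ref{AQ6}) combined with monotonicity of $m_{1,1/\Lambda_\varepsilon}$ and $\Lambda_\varepsilon$ gives $z(t)-z(s)\le(t-s)/m_{1,1/\Lambda_\varepsilon}(s)=(t-s)/[\Lambda_\varepsilon(s)v_0(z(s))]\le\delta/v_0(z(s))$, so hypothesis (\ref{BB6}) yields $v_0(z(s))\ge v_0(z(t))/(1+2\gamma(\delta))$ uniformly in $s\in[s_*(t),t]$ once $t$ is large enough. Inserting the bounds $m_{1,1/\Lambda_\varepsilon}(s_*)=\Lambda_\varepsilon(s_*)v_0(z(s_*))\ge\Lambda_\varepsilon(s_*)v_0(z(t))/(1+2\gamma(\delta))$ and $\int_{s_*}^{t}v_0(z(s))\,ds\ge(t-s_*)v_0(z(t))/(1+2\gamma(\delta))$ into the representation of $\Lambda_\varepsilon(t)$ and using $t-s_*=\delta\Lambda_\varepsilon(s_*)$ produces the key inequality
\[
\Lambda_\varepsilon(t)\;\ge\;\frac{(1+\delta)\,\Lambda_\varepsilon(s_*(t))}{1+2\gamma(\delta)}.
\]

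Writing $\rho(t):=\Lambda_\varepsilon(t)/t$ and using $t=s_*(t)[1+\delta\rho(s_*(t))]$, this rewrites as $\rho(t)\ge g_\delta(\rho(s_*(t)))$ where $g_\delta(\rho):=(1+\delta)\rho/[(1+\delta\rho)(1+2\gamma(\delta))]$ is strictly increasing in $\rho$. Passing to a sequence $t_n\to\infty$ with $\rho(t_n)\to\rho_*:=\liminf_{t\to\infty}\rho(t)$ and using $\liminf_n\rho(s_*(t_n))\ge\rho_*$ together with monotonicity of $g_\delta$ yields $\rho_*\ge g_\delta(\rho_*)$; for $\rho_*>0$ this rearranges to $\rho_*(1+2\gamma(\delta))\ge 1-2\gamma(\delta)/\delta$, and letting $\delta\to 0$ with $\gamma(\delta)/\delta\to 0$ gives $\rho_*\ge 1$, which combined with the upper bound completes the proof. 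The main obstacle is ruling out the degenerate case $\rho_*=0$, where the rearrangement is vacuous; I would handle this by a short bootstrap, observing that for $\delta$ small enough $(1+\delta)/(1+2\gamma(\delta))>1$, so when $\rho(s_*(t))$ is small the factor in $g_\delta$ is bounded below by some $1+c>1$, and iterating $\rho(t)\ge g_\delta(\rho(s_*(t)))$ backwards—while $s_*^{(k)}(t)$ decreases at most geometrically—amplifies $\rho$ beyond the upper bound $\rho\le 1+\Lambda_\varepsilon(0)/t$, contradicting $\rho_*=0$.
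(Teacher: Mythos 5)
Your argument is essentially correct and takes a genuinely different route from the paper. You get the clean upper bound $\La_\ve(t)\le\La_\ve(0)+t$ directly from the identity that is the paper's (\ref{CY6}) (which the paper records only within Lemma 3.5), together with $v_0'\le v_0^2$ from (\ref{AB6}) and $\ve<\La_\ve(0)$. For the lower bound, the paper proves that (\ref{BB6}) propagates forward in time ((\ref{BC6})--(\ref{BG6})), then builds a chain of times $T_k$ defined by multiplicative growth $\La_\ve(T_k)=(1+\del/2)\La_\ve(T_{k-1})$ and bounds the increments $T_k-T_{k-1}$ from both sides before summing. You instead work with $\rho(t)=\La_\ve(t)/t$ and an implicitly defined prior time $s_*(t)$, and derive a multiplicative recursion $\rho(t)\ge g_\del(\rho(s_*(t)))$. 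A nice feature of your route is that the inequality $z(t)-z(s)\le\del/v_0(z(s))$ transports everything back to time $0$, so you can invoke (\ref{BB6}) directly on the initial data $v_0$ at $z(s)$ (large because $z\ra\infty$ by Lemma 3.3); you do not need the paper's invariance-of-(\ref{BB6}) step. Your $m_{1,1/\La_\ve}'=v_0(z(\cdot))$ and $\La_\ve=m_{1,1/\La_\ve}/v_0(z(\cdot))$ identities and the estimate $z(t)-z(s)\le(t-s)/m_{1,1/\La_\ve}(s)$ from (\ref{AQ6}) are all correct, as is the passage to $\liminf$ once $\rho_*>0$.

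The one place the argument is not yet tight is exactly where you flag it: ruling out $\rho_*=0$. Your sketch — that backward iteration "amplifies $\rho$ beyond the upper bound $\rho\le 1+\La_\ve(0)/t$" — is not quite what happens, because the forward iterates $g_\del^{\circ k}(\cdot)$ are capped by the fixed point $\rho^*=(1-2\ga(\del)/\del)/(1+2\ga(\del))<1$ and never cross the upper bound. The correct contradiction is with $\rho(t_n)\ra 0$: iterate the backward chain from $t_n$ down to a fixed window $[s_0,2s_0]$ of large times (for $\del$ small the step sizes $\del\La_\ve(s^{(k+1)})$ are too small to jump the window); there $\rho$ has a positive lower bound $c_0=\inf_{[s_0,2s_0]}\rho>0$ by continuity and positivity of $\La_\ve$, and the chain length $k_n\ra\infty$, so $\rho(t_n)\ge g_\del^{\circ k_n}(c_0)\ra\rho^*>0$, contradicting $\rho(t_n)\ra 0$. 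Equivalently, and perhaps more economically, run the iteration forward from a single $s_0$: by induction $\rho(s_k)\ge\sig_k$ where $\sig_k=g_\del^{\circ k}(\min(\rho(s_0),\rho^*))\nearrow\rho^*$, and for $t\in[s_k,s_{k+1}]$ one has $\rho(t)\ge\rho(s_k)/(1+\del\rho(s_k))$. This gives $\liminf_t\rho(t)\ge\rho^*/(1+\del)$ directly for each $\del$, which rules out $\rho_*=0$ and yields $\liminf\rho\ge 1$ on letting $\del\ra0$, in one pass rather than a dichotomy on $\rho_*$.
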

\begin{proof}
The main point about the condition (\ref{BB6}) is that it is invariant under the dynamics determined by (\ref{BI7}), (\ref{BJ7}). It is easy to see this in the case $\ve =0$ since we have, on using the notation of (\ref{Y6}), that
\be \label{BC6}
\frac{v_0\left(x+\del/v_0(x,t),t\right)}{v_0(x,t)} \ =  \ \frac{v_0\left(F_{1/\La_0}(x,t)+\del/v_0(F_{1/\La_0}(x,t),0),0\right)}{v_0(F_{1/\La_0}(x,t),0)} \ .
\ee
 
For $\ve>0$ we have
 \be \label{BD6}
\frac{v_\ve\left(x+\del/v_\ve(x,t),t\right)}{v_\ve(x,t)} \ =   \ \frac{v_\ve\left(z(x+\del/v_\ve(x,t),t),0\right)}{v_\ve(z(x,t),0)}  \  .
\ee
Since the function $x\ra v_\ve(x,0)$ is increasing it follows from (\ref{Y6}) that
\be \label{BE6}
z(x+\del/v_\ve(x,t),t) \ \le \ z(x,t)+\frac{\del}{m_{1,1/\La_\ve}(t)v_\ve(x,t)} \ .
\ee 
We conclude now from (\ref{Z6}), (\ref{BD6}), (\ref{BE6}) that
\be \label{BF6}
\frac{v_\ve\left(x+\del/v_\ve(x,t),t\right)}{v_\ve(x,t)} \ \le  \ \frac{v_\ve\left(z(x,t)+\del/v_\ve(z(x,t),0),0\right)}{v_\ve(z(x,t),0)}  \  .
\ee

From Lemma 3.3 and (\ref{BF6}) there exists $T_0\ge 0$ such that
\be \label{BG6}
\frac{v_\ve\left(x+\del/v_\ve(x,t),t\right)}{v_\ve(x,t)} \ \le  \ 1+2\ga(\del) \quad {\rm for \ } x\ge 0, \ t\ge T_0  \  .
\ee
We use (\ref{BG6}) to estimate $\La_\ve(T_0+t)/\La_\ve(T_0)$ in the interval $0\le t\le \del/v_\ve(0,T_0)$.  Thus we have
\be \label{BH6}
\frac{\La_\ve(T_0)}{\La_\ve(T_0+t)} \ = \ \frac{v_\ve(z(t),T_0)m_{1,1/\La_\ve}(T_0)}{v_\ve(0,T_0)m_{1,1/\La_\ve}(T_0+t)} \ , \quad {\rm where \ } 0\le z(t)\le t. 
\ee
We conclude from (\ref{BG6}), (\ref{BH6}) that
\be \label{BI6}
\frac{m_{1,1/\La_\ve}(T_0)}{\La_\ve(T_0)} \ \le \ \frac{dm_{1,1/\La_\ve}(T_0+t)}{dt} \ \le \ [1+2\ga(\del)]\frac{m_{1,1/\La_\ve}(T_0)}{\La_\ve(T_0)}  \quad {\rm for \ } 0\le t\le \del\La_\ve(T_0).
\ee
On integrating (\ref{BI6})  we have
\be \label{BJ6}
1+\frac{t}{\La_\ve(T_0)} \ \le \ \frac{m_{1,1/\La_\ve}(T_0+t)}{m_{1,1/\La_\ve}(T_0)} \ \le \ 1+\frac{[1+2\ga(\del)]t}{\La_\ve(T_0)} 
\quad {\rm for \ } 0\le t\le \del\La_\ve(T_0).
\ee
Hence (\ref{BH6}), (\ref{BJ6}) imply that
\be \label{BK6}
\frac{1}{1+2\ga(\del)}\left[1+\frac{t}{\La_\ve(T_0)}\right]  \ \le \ \frac{\La_\ve(T_0+t)}{\La_\ve(T_0)} \ \le \ 1+\frac{[1+2\ga(\del)]t}{\La_\ve(T_0)} \quad {\rm for \ } 0\le t\le \del\La_\ve(T_0).
\ee

We define now $T_1>T_0$ as the minimum time $T_1=T_0+t$ such that $\La_\ve(T_0+t)\ge (1+\del/2)\La_\ve(T_0)$.  The inequality (\ref{BK6}) now yields bounds on $T_1-T_0$ as
\be \label{BL6}
\frac{\del/2}{1+2\ga(\del)} \  \le \ \frac{T_1-T_0}{\La_\ve(T_0)}  \ \le \ [1+2\ga(\del)][1+\del/2]-1  \ ,
\ee
provided the RHS of (\ref{BL6}) is less than $\del$.  In view of (\ref{BB6}) this will be the case if $\del>0$ is sufficiently small.  
We can iterate the inequality (\ref{BL6}) by defining $T_k, \ k=1,2,..,$, as the minimum time such that $\La_\ve(T_k)\ge (1+\del/2)\La_\ve(T_{k-1})$. Thus we have that
\be \label{BM6}
\frac{\del/2}{1+2\ga(\del)} \  \le \ \frac{T_k-T_{k-1}}{(1+\del/2)^{k-1}\La_\ve(T_0)}  \ \le \ [1+2\ga(\del)][1+\del/2]-1  \ , \quad k=1,2,...
\ee
On summing (\ref{BM6}) over $k=1,...,N$ we conclude that
\be \label{BN6}
\left[1-\frac{1}{(1+\del/2)^N}\right]\frac{1}{1+2\ga(\del)} \  \le \ \frac{T_N-T_{0}}{\La_\ve(T_N)}  \ \le \ \left[1-\frac{1}{(1+\del/2)^N}\right]\frac{[1+2\ga(\del)][1+\del/2]-1}{\del/2} \ .
\ee
It follows from (\ref{BN6}) that
\be \label{BO6}
\frac{1}{(1+\del/2)[1+2\ga(\del)]} \ \le \ \liminf_{t\ra\infty} \frac{t}{\La_\ve(t)}  \ \le \ \limsup_{t\ra\infty} \frac{t}{\La_\ve(t)} \ \le \ (1+\del/2)\frac{[1+2\ga(\del)][1+\del/2]-1}{\del/2}  \ .
\ee
Now using the fact that $\lim_{\del\ra 0}\ga(\del)/\del=0$, we conclude from (\ref{BO6}) that $\lim_{t\ra\infty}\La_\ve(t)/t=1$. 
\end{proof}
\begin{rem}
Theorem 5.4 of \cite{cp} implies in the case $\ve=0$  convergence to the exponential self similar solution for initial data $v_\ve(x,0), \ x\ge 0,$ which has the property $\lim_{x\ra\infty} v_\ve(x,0)/x^\al=v_{\ve,\infty}$ with $0<v_{\ve,\infty}<\infty$ provided $\al>-1$. It is easy to see that if $\al\ge 0$ then such initial data satisfies the condition (\ref{BB6}) of Lemma 3.4. 

In \cite{carr} necessary and sufficient conditions -(5.18) and (5.19) of \cite{carr}- for convergence to the exponential self-similar solution are obtained in the case $\ve=0$. Note that (5.19) of \cite{carr} implies the condition (\ref{BB6}) of Lemma 3.4.  
\end{rem}

Next we  obtain a rate of convergence theorem for $\lim_{t\ra\infty}\La_\ve(t)/t$ which generalizes Proposition 2.1 to the system (\ref{BI7}), (\ref{BJ7}).  We assume that the function $x\ra v_\ve(x,0)$ is $C^1$ for large $x$, in which case the condition (\ref{BB6}) becomes $\lim_{x\ra\infty} v_\ve(x,0)^{-2}\pa v_\ve(x,0)/\pa x=0$, or equivalently $\lim_{x\ra\infty}\beta_{X_0}(x)=1$ for the initial condition random variable $X_0$. More precisely, we have that if  $v_\ve(x,0)^{-2}\pa v_\ve(x,0)/\pa x\le\eta$ for $x\ge x_\eta$ then
\be \label{BP6}
\frac{1}{v_\ve(x,0)} - \frac{1}{v_\ve(x+\del/v_\ve(x,0),0)} \ \le \  \frac{\eta\del}{v_\ve(x,0)} \quad  {\rm for \ } x\ge x_\eta \ .
\ee 
The inequality (\ref{BP6}) implies (\ref{BB6}) holds with $\ga(\del)\le\eta\del/(1-\eta\del)$. We conclude that (\ref{BB6}) holds if $\lim_{x\ra\infty} v_\ve(x,0)^{-2}\pa v_\ve(x,0)/\pa x=0$.

The condition on the initial data to guarantee a logarithmic rate of convergence for $\La_\ve(t)/t$ is similar to (\ref{BB6}). We require that there exists $\del,\ga(\del),x_\del>0$  such that
\be \label{BQ6}
\frac{v_\ve\left(y,0\right)^2}{\pa v_\ve(y,0)/\pa y}  \ \ge \ \frac{v_\ve\left(x,0\right)^2}{\pa v_\ve(x,0)/\pa x} +\del\ga(\del)\quad {\rm for \ } y=x+\del/v_\ve(x,0) , \ x\ge x_\del \ .
\ee
Observe that if (\ref{BQ6}) holds for arbitrarily small $\del>0$ and $\liminf_{\del\ra0}x_\del=x_0$ then the function $x\ra 1/v_\ve(x,0)$ is convex for $x\ge x_0$. Furthermore if the function $x\ra v_\ve(x,0)$ is $C^2$, then on taking $\del\ra 0$ in (\ref{BQ6}) we obtain the second order differential inequality
\be \label{BR6}
\frac{v_\ve(x,0)\pa^2 v_\ve(x,0)/\pa x^2}{[\pa v_\ve(x,0)/\pa x]^2} \ \le \ 2-\ga(0) \ .
\ee
Suppose now that (\ref{BR6})  holds with $\ga(0) = \eta>0$ for $x\ge x_0$. Then we have that
\be \label{BS6}
\frac{\pa}{\pa x} \frac{v_\ve(x,0)^2}{\pa v_\ve(x,0)/\pa x}  \ge \eta v_\ve(x,0) \quad {\rm for \ } x\ge x_0 \ .
\ee
On integrating (\ref{BS6}) and using the fact that the function $x\ra v_\ve(x,0)$ is increasing, we conclude that (\ref{BQ6}) holds for all $\del>0$ with $\ga(\del)=\eta$ and $x_\del=x_0$. 

It is easy to see that (\ref{BR6}) is invariant under affine transformations. That is if the function $x\ra v_\ve(x,0)$ satisfies (\ref{BR6}) for all $x>0$, then given any $\la,k>0$ so also does the function $x\ra\la v_\ve(\la x+k,0)$. We can solve the differential equation determined by equality in (\ref{BR6}). The solution is given by the formula
\be \label{BT6}
v_\ve(x,0) \ = \  a[1+\la x]^\al \ , \quad {\rm where \ } \al \ = \ 1/[\ga(0)-1] \ .
\ee
Since we require $\ga(0)>0$ it follows from (\ref{BR6}) that $\al$ must satisfy either $\al> 0$ or $\al<-1$. Note that the function $x\ra 1/v_\ve(x,0)$ of (\ref{BT6}) is convex precisely for this range of $\al$ values. 
\begin{lem}
Assume the initial data $x\ra v_\ve(x,0)$ for  (\ref{BI7}), (\ref{BJ7}) is $C^1$ increasing and that the function $x\ra1/v_\ve(x,0)$ is convex for sufficiently large $x$.  Assume further that there exists $\del,\ga(\del),x_\del>0$ such that (\ref{BQ6}) holds. Then there exists constants $C_0,t_0>0$ such that
\be \label{BU6}
1-\frac{C_0}{\log t} \ \le \ \frac{d\La_\ve(t)}{dt} \ \le 1 \  \quad {\rm for \ } t\ge t_0 \ .
\ee 
 \end{lem}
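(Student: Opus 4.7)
The plan is to derive an exact identity for $1-\La_\ve'(t)$ and then iterate (BQ6) along the characteristic $z(t)$. Starting from (AR6), substitute the two relations $v_0(z(t)) = m_{1,1/\La_\ve}(t)/\La_\ve(t)$ (from $v_\ve(0,t)=v_0(z(t))/m_{1,1/\La_\ve}(t)=1/\La_\ve(t)$) and $\pa v_\ve(0,t)/\pa t=-\La_\ve'(t)/\La_\ve(t)^2$. With the notation $\beta_0(z)=1-v_0'(z)/v_0(z)^2$ for the beta function of the initial random variable, a short algebraic manipulation yields the exact identity
\begin{equation*}
1-\La_\ve'(t) \;=\; \frac{\bigl(1-\beta_0(z(t))\bigr)\bigl(1-\ve/\La_\ve(t)\bigr)}{1+c(t)}, \qquad c(t):=\ve\,\sig^2_{1/\La_\ve}(t)\,v_0'(z(t))\big/m_{1,1/\La_\ve}(t)^2 \;\ge\;0.
\end{equation*}
Since (AB6) forces $\beta_0\le 1$, $c(t)\ge 0$, and $\La_\ve(t)\ge\La_\ve(0)>\ve$ by Lemma 3.2 together with the Lemma 3.1 hypothesis, the right-hand side is non-negative, which proves the upper bound $\La_\ve'(t)\le 1$ in (BU6).

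For the lower bound the task reduces to showing $1-\beta_0(z(t))=O(1/\log t)$. Set $x_0=x_\del$ and $x_{k+1}=x_k+\del/v_0(x_k)$; iterating (BQ6) yields $1/(1-\beta_0(x_N))\ge 1/(1-\beta_0(x_0))+N\del\,\ga(\del)$. The telescoping identity $\sum_{j<N}v_0(x_j)(x_{j+1}-x_j)=N\del$ together with monotonicity of $v_0$ sandwiches $N\del$ by the integral $V(z):=\int_{x_\del}^z v_0(y)\,dy$. Because the convexity hypothesis on $1/v_\ve(\cdot,0)$ is equivalent to monotonicity of $1/(1-\beta_0)$ on $[x_\del,\infty)$ (via the discussion between (BP6) and (BT6)), one can extend the discrete bound from the orbit $\{x_k\}$ to all $z\ge x_\del$ and obtain
\begin{equation*}
\frac{1}{1-\beta_0(z)} \;\ge\; \ga(\del)\,V(z)-C_1.
\end{equation*}

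Differentiating $v_0(z(t))=m_{1,1/\La_\ve}(t)/\La_\ve(t)$ and simplifying with the identity of the first paragraph gives $v_0(z(t))\,z'(t)=(1-\ve/\La_\ve(t))/[(1+c(t))\La_\ve(t)]$, hence $\tfrac{d}{dt}V(z(t))=(1-\ve/\La_\ve(t))/[(1+c(t))\La_\ve(t)]$. By Lemma 3.4, whose hypothesis (BB6) is implied by (BQ6) and convexity of $1/v_0$, one has $\La_\ve(t)/t\to 1$, so $\ve/\La_\ve(t)\to 0$; and the estimate $\sig^2_{1/\La_\ve}(t)/m_{1,1/\La_\ve}(t)^2=O(t)$ combined with $v_0'\le v_0^2$ yields $c(t)\le\ve\,O(t)/\La_\ve(t)^2\to 0$. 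Integrating against the upper bound $\La_\ve(s)\le\La_\ve(0)+s$ gives $V(z(t))\ge(1+o(1))\log(1+t)$. Substituting into the display of the previous paragraph shows $1-\beta_0(z(t))\le C_0/\log t$, and the exact identity of the first paragraph together with $(1-\ve/\La_\ve)/(1+c)\le 1$ finally delivers $1-\La_\ve'(t)\le C_0/\log t$, as required.

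The main obstacle is propagating the discrete iteration of (BQ6) into a continuous estimate valid for every $z\ge x_\del$: this step relies on the monotonicity of $1/(1-\beta_0)$ on that half-line, which is exactly what the convexity hypothesis on $1/v_\ve(\cdot,0)$ secures through the differential inequality that (BQ6) forces in the limit $\del\to 0$. The remaining bookkeeping — verifying $c(t)=o(1)$ and $\ve/\La_\ve(t)=o(1)$, and converting the identity $V(z(t))=V(z(0))+\log m_{1,1/\La_\ve}(t)$ into the lower bound $\log t$ — is routine once Lemma 3.4 and the upper bound $\La_\ve'(t)\le 1$ are in hand.
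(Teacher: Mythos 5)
Your opening identity is correct: combining (\ref{CY6}) with (\ref{CL6}) at $x=0$ does give
$1-\La_\ve'(t)=(1-\beta_0(z(t)))(1-\ve/\La_\ve(t))/(1+c(t))$ with $c(t)=\ve\,\sig_{1/\La_\ve}^2(t)\,v_0'(z(t))/m_{1,1/\La_\ve}(t)^2\ge 0$, and with it the upper bound $\La_\ve'\le 1$ is immediate. The discrete iteration of (\ref{BQ6}) and its passage to the integral $V(z)=\int_{x_\del}^z v_0$ is also fine, modulo constants. The gap is in the assertion $c(t)=o(1)$. You write that $\sig^2_{1/\La_\ve}/m_{1,1/\La_\ve}^2=O(t)$ together with $v_0'\le v_0^2$ gives $c(t)\le \ve\,O(t)/\La_\ve(t)^2$, but that silently replaces $v_0(z(t))^2=m_{1,1/\La_\ve}(t)^2/\La_\ve(t)^2$ by $1/\La_\ve(t)^2$ — a confusion between the evolved quantity $v_\ve(0,t)=1/\La_\ve(t)$ and the \emph{initial} quantity $v_0(z(t))=m_{1,1/\La_\ve}(t)/\La_\ve(t)$. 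The correct bound from your two inputs is $c(t)\le \ve\,C\,v_0(z(t))^2$ for some constant $C$. Since $z(t)\to x_\infty=\infty$ and $v_0$ need not be bounded (e.g. $v_0(z)=1+z^2$ satisfies every hypothesis of the lemma), $c(t)$ can tend to $\infty$, not to $0$. With $c(t)$ unbounded the step $\int_0^t \frac{1-\ve/\La_\ve(s)}{\La_\ve(s)(1+c(s))}\,ds\ge (1+o(1))\log(1+t)$ breaks down, and $V(z(t))\gtrsim\log t$ is no longer available, so the lower bound in (\ref{BU6}) is not established.

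The paper's proof avoids exactly this obstruction by \emph{restarting} the dynamics at a large time $T_0$ and only tracking the characteristic over a bounded relative horizon $0\le t\le K\La_\ve(T_0)$. In that window (\ref{CU6}) gives the restarted analogue of your $c$ a bound $\ve\nu K\exp(2K)/\La_\ve(T_0+t)$, where $\nu=\sup_{x\ge 0}(1-\beta_\ve(x,T_0))$ is made small by choosing $T_0$ large and where the extra factor $1/\La_\ve(T_0)\to 0$ supplies the smallness you needed; the argument then iterates over doubling blocks $T_k$ to accumulate the $k\del\ga(\del)$ growth of $v_\ve^2/\pa_x v_\ve$ (see (\ref{CD6})--(\ref{CE6}) and (\ref{CV6})). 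For the process started from time $0$, by contrast, $\sig^2_{1/\La_\ve}(t)\sim m_{1,1/\La_\ve}(t)^2\int_0^\infty m_{1,1/\La_\ve}(s)^{-2}ds$, so $\sig^2_{1/\La_\ve}(t)/\La_\ve(t)^2$ grows like $v_0(z(t))^2$ rather than staying $O(1)$, and the global-in-time bookkeeping cannot reproduce the restarted estimates. To repair your argument you would either have to incorporate the restart-and-iterate structure, or find a way to close the loop using the refined relation $c(t)=\kappa(t)\,(1-\beta_0(z(t)))$ with $\kappa(t)=\ve\sig_{1/\La_\ve}^2(t)/\La_\ve(t)^2$, which is nontrivial precisely because $\kappa$ need not stay bounded.
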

\begin{proof}
 Since the inequality (\ref{BQ6}) is invariant under affine transformations we see as in Lemma 3.4 that in the case $\ve=0$ there exists  $T_0>0$ such that if $t\ge T_0$ the function $x\ra 1/v_\ve(x,t)$ is convex for $x\ge 0$, and 
\be \label{BV6}
\frac{v_\ve\left(y,t\right)^2}{\pa v_\ve(y,t)/\pa y}  \ \ge \ \frac{v_\ve\left(x,t\right)^2}{\pa v_\ve(x,t)/\pa x} +\del\ga(\del)\quad {\rm for \ } y=x+\del/v_\ve(x,t) , \ x\ge 0, \ t\ge T_0 \ .
\ee

Next observe that since $\lim_{x\ra\infty} v_\ve(x,0)^{-2}\pa v_\ve(x,0)/\pa x=0$, we may for any $\nu>0$ choose $T_0$ such that   $v_\ve(x,T_0)^{-2}\pa v_\ve(x,T_0)/\pa x\le \nu$  for $x\ge 0$.  It follows then from (\ref{BP6}) that
\be \label{BW6}
\frac{v_\ve(y,T_0)}{v_\ve(0,T_0)} \ \le \ \frac{1}{1-\nu v_\ve(0,T_0)y} \ \quad {\rm for \ } 0\le y< \frac{1}{\nu v_\ve(0,T_0)} \ = \ \frac{\La_\ve(T_0)}{\nu} \ .
\ee
Hence as in (\ref{BI6}) we see from (\ref{BH6}), (\ref{BW6})  that 
\be \label{BX6}
\frac{dm_{1,1/\La_\ve}(T_0+t)}{dt} \ \le \ \frac{m_{1,1/\La_\ve}(T_0)}{[1-\nu v_\ve(0,T_0)t]\La_\ve(T_0)}  \quad {\rm for \ } 0\le t< \frac{\La_\ve(T_0)}{\nu} \ .
\ee
Integrating (\ref{BX6}) we conclude that for $0\le t< \La_\ve(T_0)/\nu$,
\be \label{BY6}
\frac{m_{1,1/\La_\ve}(T_0)}{m_{1,1/\La_\ve}(T_0+t)} \ \ge \ \left[1-\frac{1}{\nu}\log\left\{1-\frac{\nu t}{\La_\ve(T_0)}\right\}\right]^{-1} \ .
\ee
Using the inequality $-\log(1-z)\le 3z/2$ when $0\le z\le 1/3$, we conclude from (\ref{BY6}) that
\be \label{BZ6}
\frac{m_{1,1/\La_\ve}(T_0)}{m_{1,1/\La_\ve}(T_0+t)} \ \ge \ \left[1+\frac{3 t}{2\La_\ve(T_0)}\right]^{-1}  \quad {\rm for \ } 0\le t\le \frac{\La_\ve(T_0)}{3\nu} \ .
\ee
Similarly to (\ref{BK6}) we have from (\ref{BZ6})  that
 \be \label{CC6}
 \frac{\La_\ve(T_0+t)}{\La_\ve(T_0)}  \ \le \ 1+\frac{3 t}{2\La_\ve(T_0)}  \quad {\rm for \ } 0\le t\le \frac{\La_\ve(T_0)}{3\nu} \ .
 \ee

 In the case $\ve =0$ the LHS of (\ref{BZ6}) is $dz(t)/dt$, so on integration we have that 
 \be \label{CA6}
 z(t) \ \ge \ \frac{2\La_\ve(T_0)}{3}\log\left[1+\frac{3 t}{2\La_\ve(T_0)}\right] \quad {\rm for \ } 0\le t\le \frac{\La_\ve(T_0)}{3\nu}  \ .
 \ee
 We choose now $\nu$ sufficiently small so that $2\log[1+1/2\nu]/3>\del$ and let $T_1$ be the minimum $T_0+t$ such that $z(t)\ge \del/v_\ve(0,T_0)$.  Then we have that
 \be \label{CB6}
 T_1-T_0 \ \le \ \frac{\La_\ve(T_0)}{3\nu} \ , \quad \frac{v_\ve\left(0,T_1\right)^2}{\pa v_\ve(0,T_1)/\pa x}  \ \ge \ \frac{v_\ve\left(0,T_0\right)^2}{\pa v_\ve(0,T_0)/\pa x} +\del\ga(\del) \ .
 \ee
 Furthermore (\ref{CC6}) implies that $\La_\ve(T_1)/\La_\ve(T_0)\le 1+1/2\nu$.  We now iterate the foregoing to yield a sequence of times $T_k, \ k=1,2,..,$ with the properties that
 \be \label{CD6}
 T_k-T_{k-1} \ \le \ \frac{\La_\ve(T_{k-1})}{3\nu}, \quad  \frac{\La_\ve(T_k)}{\La_\ve(T_{k-1})} \ \le 1+\frac{1}{2\nu}, \quad \frac{v_\ve\left(0,T_k\right)^2}{\pa v_\ve(0,T_k)/\pa x} \ \ge  \ k\del\ga(\del).
 \ee
 It follows from (\ref{CD6}) that
 \be \label{CE6}
 T_N-T_0 \ \le \ \frac{2}{3} \left(1+\frac{1}{2\nu}\right)^N\La_\ve(T_0), \quad \frac{d\La_\ve(T_N)}{dt} \ \ge 1-\frac{1}{N\del\ga(\del)}, \quad N=1,2,...
 \ee
 The inequality (\ref{CE6}) implies the lower bound in (\ref{BU6}) since the function $t\ra d\La_\ve(t)/dt$ is increasing for $t\ge T_0$.  
 
 To deal with $\ve>0$ we first assume that the function $x\ra v_\ve(x,0)$ is $C^2$ for $x>0$. Letting $z(x,t)$ be the solution to (\ref{Y6}) we have that 
 \be \label{CF6}
 \frac{\pa z(x,t)}{\pa x} \ = \ \frac{1}{m_{1,1/\La_\ve}(t)} \left[1+\ve \frac{\sig_{1/\La_\ve}^2(t)}{m_{1,1/\La_\ve}(t)^2} \frac{\pa v_\ve(z(x,t),0)}{\pa z}\right]^{-1} \ ,
 \ee
 \be \label{CG6}
 \frac{\pa^2 z(x,t)}{\pa x^2} \ = \ -\ve\frac{\sig_{1/\La_\ve}^2(t)}{m_{1,1/\La_\ve}(t)^2} \frac{\pa^2 v_\ve(z(x,t),0)}{\pa z^2}  \left[1+\ve \frac{\sig_{1/\La_\ve}^2(t)}{m_{1,1/\La_\ve}(t)^2} \frac{\pa v_\ve(z(x,t),0)}{\pa z}\right]^{-1}\left( \frac{\pa z(x,t)}{\pa x}\right)^2 \ .
 \ee
 We also have that
 \be \label{CH6}
  \frac{\pa v_\ve(x,t)}{\pa x} \ = \ \frac{1}{m_{1,1/\La_\ve}(t)}  \frac{\pa v_\ve(z(x,t),0)}{\pa z}  \frac{\pa z(x,t)}{\pa x} ,
 \ee
\be \label{CI6}
  \frac{\pa^2 v_\ve(x,t)}{\pa x^2} \ = \ \frac{1}{m_{1,1/\La_\ve}(t)} \left[ \frac{\pa^2 v_\ve(z(x,t),0)}{\pa z^2} \left( \frac{\pa z(x,t)}{\pa x}\right)^2+
 \frac{\pa v_\ve(z(x,t),0)}{\pa z}  \frac{\pa^2 z(x,t)}{\pa x^2}\right]  .
 \ee
 It follows from (\ref{CF6})-(\ref{CI6}) that the ratio (\ref{BR6}) for the function $x\ra v_\ve(x,t)$ is given by 
 \be \label{CJ6}
\frac{v_\ve(x,t)\pa^2 v_\ve(x,t)/\pa x^2}{[\pa v_\ve(x,t)/\pa x]^2} \ = \  \frac{v_\ve(z(x,t),0)\pa^2 v_\ve(z(x,t),0)/\pa z^2}{[\pa v_\ve(z(x,t),0)/\pa z]^2}  \left[1+\ve \frac{\sig_{1/\La_\ve}^2(t)}{m_{1,1/\La_\ve}(t)^2} \frac{\pa v_\ve(z(x,t),0)}{\pa z}\right]^{-1} \ .
\ee
Hence if (\ref{BR6}) holds for all sufficiently large $x$, then Lemma 3.3 implies that there exists $T_0>0$ such that the RHS of (\ref{CJ6}) is bounded above by $2$ for $x\ge0,t\ge T_0$.  It follows  that the function $x\ra 1/v_\ve(x,t)$ is convex for $x\ge 0$ provided $t\ge T_0$. Observe that if $0<\ga(0)\le 2$ in (\ref{BR6}) then the RHS of (\ref{CJ6}) is bounded above by $2-\ga(0)$. However if $\ga(0)>2$ then we can only bound the RHS above by $0$. It is easy to see from the example (\ref{BT6}) that this is the best bound we can obtain.  In fact if $\al>1$ in (\ref{BT6}) then $\lim_{z\ra\infty}\pa v_\ve(z,0)/\pa z=\infty$, in which case the RHS of (\ref{CJ6}) converges to $0$ as $x\ra\infty$.  We have shown that if 
(\ref{BR6}) holds for all sufficiently large $x$ then there exists $T_0>0$ such that
 \be \label{CK6}
\frac{v_\ve(x,t)\pa^2 v_\ve(x,t)/\pa x^2}{[\pa v_\ve(x,t)/\pa x]^2} \ \le \ \max[2-\ga(0),0] \quad {\rm for \ } x\ge 0, \ t\ge T_0 \ .
\ee

In the case when we only assume that the function $x\ra v_\ve(x,0)$ is $C^1$ for $x>0$ we can make a more careful version of the argument of the previous paragraph.  We have now from (\ref{CF6}), (\ref{CH6}) that
\be \label{CL6}
\frac{v_\ve\left(x,t\right)^2}{\pa v_\ve(x,t)/\pa x} \ = \ \frac{v_\ve\left(z(x,t),0\right)^2}{\pa v_\ve(z(x,t),0)/\pa z} +\ve \frac{\sig_{1/\La_\ve}^2(t)}{m_{1,1/\La_\ve}(t)^2}v_\ve(z(x,t),0)^2 \ .
\ee
Since the function $x\ra z(x,t)$ is increasing  for $x\ge 0$, it follows that the second function on the RHS of (\ref{CL6}) is increasing for  $x\ge 0$.  Since we are assuming that the function $z\ra 1/v_\ve(z,0)$ is convex for all large $z$, it follows that the first function on the RHS of (\ref{CL6}) is also increasing for $x\ge 0$ provided $t\ge T_0$ and $T_0$ is sufficiently large.  We conclude that the function $x\ra 1/v_\ve(x,t)$ is convex for $x\ge 0$ provided $t\ge T_0$. 

We can also obtain an inequality (\ref{BV6}) for  a $\del$ which is  twice the $\del$ which occurs in  (\ref{BQ6}).  To show this we consider two possibilities. In the first of these (\ref{BV6}) follows from the monotonicity of the second function on the RHS of (\ref{CL6}).   We use the inequality
\begin{multline} \label{CM6}
\ve \frac{\sig_{1/\La_\ve}^2(t)}{m_{1,1/\La_\ve}(t)^2}v_\ve\left(z\left(x+\frac{\eta}{v_\ve(x,t)},t\right),0\right)^2 \  \ \ge \ \ve \frac{\sig_{1/\La_\ve}^2(t)}{m_{1,1/\La_\ve}(t)^2}v_\ve(z(x,t),0)^2+ \\
2\ve\eta \frac{\sig_{1/\La_\ve}^2(t)}{m_{1,1/\La_\ve}(t)^2}\int_0^1d\rho \ \frac{\pa v_\ve}{\pa z}\left(z\left(x+\frac{\rho\eta}{v_\ve(x,t)},t\right),0\right)\left[1+
\ve \frac{\sig_{1/\La_\ve}^2(t)}{m_{1,1/\La_\ve}(t)^2}\frac{\pa v_\ve}{\pa z}\left(z\left(x+\frac{\rho\eta}{v_\ve(x,t)},t\right),0\right) \ \right]^{-1} \ ,
\end{multline}
which follows from (\ref{Z6}), (\ref{CF6}), the monotonicity of the function $z\ra v_\ve(z,0)$,  and Taylor's formula.  Observe next from the convexity of the function $z\ra 1/v_\ve(z,0)$  that   for $0<\rho'<\rho$, 
\be \label{CN6}
\pa v_\ve\left(z\left(x+\frac{\rho\eta}{v_\ve(x,t)},t\right),0\right)\Big/\pa z \ \le \ 
\left[\frac{v_\ve(x+\rho\eta/v_\ve(x,t),t)}{v_\ve(x+\rho'\eta/v_\ve(x,t),t)}\right]^2\frac{\pa v_\ve}{\pa z}\left(z\left(x+\frac{\rho'\eta}{v_\ve(x,t)},t\right),0\right) \ .
\ee
Furthermore we have similarly to (\ref{BW6}) that for $\nu>0$  there exists $T_0>0$ such that for $0<\rho'<\rho$ and  $t\ge T_0$, 
\be \label{CO6}
1 \ \le \ \frac{v_\ve(x+\rho\eta/v_\ve(x,t),t)}{v_\ve(x+\rho'\eta/v_\ve(x,t),t)} \ \le  \  \frac{1-\nu \rho'\eta}{1-\nu\rho\eta} \quad {\rm provided \ } \rho<\frac{1}{\nu\eta} \ .
\ee
Now let us assume that $t\ge T_0$ and 
\be \label{CP6}
\ve \frac{\sig_{1/\La_\ve}^2(t)}{m_{1,1/\La_\ve}(t)^2}\frac{\pa v_\ve}{\pa z}\left(z\left(x+\frac{\del}{2v_\ve(x,t)},t\right),0\right) \ \ge \  \frac{1}{4} \ .
\ee
Then (\ref{CN6}), (\ref{CO6}) imply upon setting $\eta=\del/2$ in (\ref{CM6}) and choosing $\nu$ less than some constant depending only on $\del$,  that the second term on the RHS  is bounded below by  $\del/6$ for $t\ge T_0$.   This implies that (\ref{BV6}) holds with $\ga(\del)=1/6$. 

Alternatively we assume that (\ref{CP6}) does not hold.  Then on choosing $\nu$ sufficiently small, depending only on $\del$, we see that  (\ref{CF6}), (\ref{CN6}), (\ref{CO6}) implies
\be \label{CQ6}
z\left(x+\frac{2\del}{v_\ve(x,t)},t\right) \ge  \ z\left(x+\frac{\del}{2v_\ve(x,t)},t\right)+\frac{11\del}{10v_\ve(z(x,t),0)} \ .
\ee
Then we use the first term on the RHS of (\ref{CL6}) and (\ref{BQ6}), (\ref{CQ6}) to establish (\ref{BV6}) with $2\del$ in place of $\del$.   We have proved then that there exists $T_0>0$ such that (\ref{BV6}) holds  (with $\del$ replaced by $2\del$).  

We wish next to establish an inequality like (\ref{CB6}) in the case $\ve>0$, in which case we need to examine the terms  of (\ref{AQ6}) that depend on $\ve$.   Using the notation of (\ref{Y2}), (\ref{Z2}) the $\ve$ dependent coefficient on the LHS of (\ref{AQ6}) is given by
\be \label{CR6}
\ve \frac{\sig_{1/\La_\ve}^2(T_0,T_0+t)}{m_{1,1/\La_\ve}(T_0,T_0+t)^2}\frac{\pa v_\ve(z(t),T_0)}{\pa z} \ = \  \ve \frac{\sig_{1/\La_\ve}^2(T_0,T_0+t)}{\La_\ve(T_0+t)^2}\left[v_\ve(z(t),T_0)^{-2}\frac{\pa v_\ve(z(t),T_0)}{\pa z}\right] \ .
\ee
The $\ve$ dependent coefficient on the RHS of (\ref{AQ6}) is given by
\be \label{CS6}
\ve \frac{v_\ve(z(t),T_0)}{m_{1,1/\La_\ve}(T_0,T_0+t)} \ = \ \frac{\ve}{\La_\ve(T_0+t)} \ .
\ee
We choose now $T_0$ large enough so that $\ve/\La_\ve(T_0)<1/2$, whence (\ref{CS6}) implies that the term in brackets on the RHS of (\ref{AQ6}) is at least $1/2$.  We also have from (\ref{CR6}) that
\be \label{CT6}
\ve \frac{\sig_{1/\La_\ve}^2(T_0,T_0+t)}{m_{1,1/\La_\ve}(T_0,T_0+t)^2}\frac{\pa v_\ve(z(t),T_0)}{\pa z} \ \le \ \frac{\nu}{2}\frac{\sig_{1/\La_\ve}^2(T_0,T_0+t)}{\La_\ve(T_0+t)} \ .
\ee
Now using (\ref{Y2}), we conclude from (\ref{CT6}) that for any $K>0$, 
\be \label{CU6}
\ve \frac{\sig_{1/\La_\ve}^2(T_0,T_0+t)}{m_{1,1/\La_\ve}(T_0,T_0+t)^2}\frac{\pa v_\ve(z(t),T_0)}{\pa z} \ \le \  \frac{\nu K\exp(2K)}{2} \quad {\rm for \ } 0\le t\le K\La_\ve(T_0)  \ .
\ee
It follows from (\ref{BV6}), (\ref{CF6}),  (\ref{CU6}) that there exists $T_1>T_0$ such that (\ref{CB6}) holds. Therefore we can define a sequence $ T_k, \ k=1,2,..,$ of times having the properties (\ref{CD6}). 

In order to estimate $d\La_\ve(T_{k-1}+t)/dt$ for $0\le t\le T_k-T_{k-1}$,  we need to examine the terms  of (\ref{AR6}) that depend on $\ve$.  Similarly to (\ref{CU6}) we have from (\ref{BJ7}), (\ref{AR6}) and the convexity of the function $x\ra 1/v_\ve(x,T_{k-1})$  that
\be \label{CV6}
\left[1+\frac{\ve\nu K\exp(2K)}{\La_\ve(T_{k-1})} \right]\frac{d\La_\ve(T_{k-1}+t)}{dt} \ \ge 1-  \frac{\pa v_\ve(0,T_{k-1})/\pa x}{v_\ve(0,T_{k-1})^2} \quad {\rm for \ } 0\le t\le T_k-T_{k-1} \ .
\ee
Noting from (\ref{BK6}) that $\La_\ve(T_k)$ grows exponentially in $k$, we conclude  from (\ref{CD6}), (\ref{CE6}) and  (\ref{CV6}) that the lower bound in (\ref{BU6}) holds.  To obtain the upper bound in (\ref{BU6}) we use the identity
\be \label{CY6}
\frac{d\La_\ve(t)}{dt} \ = \ 1-\left[1-\ve v_\ve(0,t)\right]\frac{1}{v_\ve(0,t)^2}\frac{\pa v_\ve(0,t)}{\pa x} \ ,
\ee
obtained from (\ref{BI7}), (\ref{BJ7}).  Evidently the RHS of (\ref{CY6}) does not exceed $1$.
\end{proof}
\begin{lem}
Assume $\ve>0$ and the initial data for  (\ref{BI7}), (\ref{BJ7}) satisfies the conditions of Lemma 3.1  with $x_\infty<\infty$. Then for any $t>0$ the function $x\ra v_\ve(x,t)$ satisfies $\lim_{x\ra\infty} v_\ve(x,t)/x=v_\infty(t)$ for some $v_\infty(t)>0$. 

 Assume in addition that the initial data is $C^1$, the function $x\ra 1/v_\ve(x,0)$ is convex for $x$ sufficiently close to $x_\infty$, and $\liminf_{x\ra x_\infty} \pa v_\ve(x,0)/\pa x>0$. Then for any $t>0$ the function $x\ra 1/v_\ve(x,t)$ is convex for $x$ sufficiently large, and  the inequality (\ref{BQ6}) holds for all $\del>0$. 
\end{lem}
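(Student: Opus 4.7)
My plan is to work with the characteristic representation (\ref{Z6}) together with the implicit equation (\ref{Y6}) for $z(x,t) = F_{\ve,1/\La_\ve}(x,t,v_\ve(\cdot,0))$, and with the identity (\ref{CL6}). Throughout fix $t > 0$ and abbreviate $m_1 = m_{1,1/\La_\ve}(t)$, $m_2 = m_{2,1/\La_\ve}(t)$, $\sig^2 = \sig_{1/\La_\ve}^2(t)$, and $v_0 = v_\ve(\cdot,0)$.

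For the first assertion, note that $x \mapsto z(x,t)$ is increasing (Lemma 3.2) and bounded above by $x_\infty$, so its limit exists; since the RHS of (\ref{Y6}) tends to $\infty$ as $x \to \infty$, $v_0(z(x,t))$ must blow up, and by (\ref{AA6}) this forces $z(x,t) \to x_\infty$. Solving (\ref{Y6}) for $v_0(z(x,t))$ and applying (\ref{Z6}) yields the exact identity
\begin{equation*}
v_\ve(x,t) \ = \ \frac{1}{\ve \sig^2}\bigl[x + m_2 - m_1 z(x,t)\bigr],
\end{equation*}
whence $v_\ve(x,t)/x \to 1/(\ve \sig^2)$ as $x \to \infty$, proving the first assertion with $v_\infty(t) = 1/(\ve \sig_{1/\La_\ve}^2(t)) > 0$.

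For the convexity claim I would apply identity (\ref{CL6}): it writes $v_\ve(x,t)^2/(\pa v_\ve(x,t)/\pa x)$ as the sum of $v_0(z(x,t))^2/v_0'(z(x,t))$ and $(\ve \sig^2/m_1^2)v_0(z(x,t))^2$. The second summand is evidently increasing in $x$; the first is increasing in $z$ exactly where $1/v_0$ is convex, which holds near $x_\infty$ by hypothesis. Since $z(x,t)\to x_\infty$, both summands are increasing in $x$ once $x$ is sufficiently large, and so $1/v_\ve(\cdot,t)$ is convex there. For (\ref{BQ6}) at time $t$, fix $\del>0$, set $y = x+\del/v_\ve(x,t)$, $z_x = z(x,t)$, $z_y = z(y,t)$. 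By convexity the first summand of (\ref{CL6}) contributes a non-negative difference, so it suffices to bound below
\begin{equation*}
\frac{\ve \sig^2}{m_1^2}\bigl[v_0(z_y) - v_0(z_x)\bigr]\bigl[v_0(z_y) + v_0(z_x)\bigr].
\end{equation*}
Subtracting (\ref{Y6}) at $x$ from (\ref{Y6}) at $y$ and applying the mean value theorem gives $v_0(z_y) - v_0(z_x) = v_0'(\xi)(y-x)/[m_1 + v_0'(\xi)\ve\sig^2/m_1]$ for some $\xi \in (z_x, z_y)$; the hypothesis $\liminf_{z \to x_\infty} v_0'(z) > 0$ furnishes $c > 0$ with $v_0'(\xi) \ge c$ once $z_x$ is near $x_\infty$, which bounds the ratio below by $C_1(y - x)$ for some $C_1 = C_1(t) > 0$. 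Using $v_0(z_y) + v_0(z_x) \ge 2v_0(z_x) = 2 m_1 v_\ve(x,t)$ together with $y - x = \del/v_\ve(x,t)$, the displayed expression is $\ge 2C_1 \ve \sig^2 \del/m_1 = \del\,\ga(\del)$ with $\ga(\del) = 2C_1 \ve \sig^2/m_1 > 0$.

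The main obstacle is the first step, namely showing $z(x,t) \to x_\infty$ and extracting the exact formula for $v_\ve(x,t)$; once these are in hand, everything else reduces to (\ref{CL6}) and the mean value theorem. A secondary subtlety is that $\liminf v_0' > 0$ is weaker than $v_0' \to \infty$, so one cannot rely on the sharp asymptotic $v_0(z_y) - v_0(z_x) \sim (y-x) m_1/(\ve \sig^2)$; nevertheless a linear lower bound in $y - x$, combined with the divergence of $v_0(z_x) + v_0(z_y)$ at $x_\infty$, is enough to yield the required quantitative gain $\del\,\ga(\del)$.
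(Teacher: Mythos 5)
Your argument is correct and follows essentially the same route as the paper's proof: you derive the linear growth of $v_\ve(\cdot,t)$ directly from the implicit equation (\ref{Y6}) and (\ref{Z6}), and you establish the convexity claim and (\ref{BQ6}) by working with the additive decomposition (\ref{CL6}) into the ``inherited'' summand $v_0(z)^2/v_0'(z)$ and the $\ve$-summand $\ve\sig^2 v_0(z)^2/m_1^2$, showing the former is non-decreasing where $1/v_0$ is convex and extracting the quantitative gain $\del\ga(\del)$ from the latter. The paper treats the $C^2$ case via (\ref{CJ6}) and only gestures at the $C^1$ case (``then we use (\ref{CL6}), whence (\ref{BQ6}) follows from (\ref{CM6})''); you instead give a direct $C^1$ argument with the mean-value theorem applied to (\ref{Y6}), which is cleaner and more self-contained than invoking (\ref{CM6}) from Lemma 3.5. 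Two cosmetic points: the monotonicity of $x\mapsto z(x,t)$ is cleaner to cite from (\ref{CF6}) (or directly from the monotonicity of both sides of (\ref{Y6})) than from Lemma 3.2, which concerns $t\mapsto z(t)$ at $x=0$; and for the first assertion the paper gets by with the two-sided bound (\ref{CW6}) from $0\le z(x,t)\le x_\infty$, whereas your exact identity $v_\ve(x,t)=\{x+m_2-m_1 z(x,t)\}/(\ve\sig^2)$ is a mild sharpening that requires the additional (correct) observation $z(x,t)\to x_\infty$.
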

\begin{proof}
From (\ref{Y6}), (\ref{Z6}) we have that 
\be \label{CW6}
\frac{x+m_{2,1/\La_\ve}(t)-m_{1,1/\La_\ve}(t)x_\infty}{\ve\sig_{1/\La_\ve}^2(t)} \ \le \ v_\ve(x,t) \ \le \ \frac{x+m_{2,1/\La_\ve}(t)}{\ve\sig_{1/\La_\ve}^2(t)} \quad {\rm for \ } x\ge 0. 
\ee
Hence $\lim_{x\ra\infty} v_\ve(x,t)/x=1/\ve \sig_{1/\La_\ve}^2(t)$. 

It is easy to see  from our assumptions that (\ref{BQ6}) is satisfied if $v_\ve(x,0)$ is $C^2$ for $x$ sufficiently close to $x_\infty$. In that case it follows from  the convexity of the function $x\ra 1/v_\ve(x,0)$ close to $x_\infty$ and (\ref{CJ6}) that there exists $\eta(t)>0$ and $x_\eta(t)$ with
 \be \label{CX6}
\frac{v_\ve(x,t)\pa^2 v_\ve(x,t)/\pa x^2}{[\pa v_\ve(x,t)/\pa x]^2} \ \le \   2\left[1+\ve \frac{\sig_{1/\La_\ve}^2(t)}{m_{1,1/\La_\ve}(t)^2} \frac{\pa v_\ve(z(x,t),0)}{\pa z}\right]^{-1} \ \le 2-\eta(t) \ 
\ee
for $x>x_\eta(t)$.  If we only assume the function $x\ra v_\ve(x,t)$ is $C^1$, then we use  (\ref{CL6}), whence (\ref{BQ6}) follows from (\ref{CM6}). 
\end{proof}
\begin{proof}[Proof of Theorem 1.1:]  Note the assumption that the function $x\ra E[X_0-x \ | \ X_0>x]$ is decreasing implies that the initial data $v_\ve(\cdot,0)$ for (\ref{BI7}), (\ref{BJ7}) is continuous and increasing. Now $\lim_{t\ra\infty}\langle X_t\rangle/t=1$ follows from Lemma 3.4, the remark following it and Lemma 3.6. The inequality (\ref{L1}) follows from Lemma 3.5 and Lemma 3.6.
\end{proof}

\vspace{.1in}

\section{Representations of Green's functions}
Let $b:\R\times\R\ra\R$ be a continuous function which satisfies the uniform Lipschitz condition
\be \label{A2}
\sup\left\{|\pa b(y,t)/\pa y| \ : \ y,t\in\R\right\} \ \le \ A_\infty
\ee
for some constant $A_\infty$. 
Then the terminal value problem
\be \label{B2}
\frac{\pa u_\ve(y,t)}{\pa t}+b(y,t)\frac{\pa u_\ve(y,t)}{\pa y}+\frac{\ve}{2}\frac{\pa^2 u_\ve(y,t)}{\pa y^2} \ = \ 0, \quad y\in\R, \ t<T,
\ee
\be \label{C2}
u_\ve(y,T) \ = \ u_T(y), \quad y\in\R,
\ee
has a unique solution $u_\ve$ which has the representation
\be \label{D2}
u_\ve(y,t) \ = \ \int_{-\infty}^\infty G_\ve(x,y,t,T) u_T(x) \ dx, \quad y\in\R, \ t<T,
\ee
where $G_\ve$ is the Green's function for the problem.  
The adjoint problem to (\ref{B2}), (\ref{C2}) is the initial value problem
\be \label{E2}
\frac{\pa v_\ve(x,t)}{\pa t}+\frac{\pa}{\pa x}\left[b(x,t)v_\ve(x,t)\right] \ = \ \frac{\ve}{2}\frac{\pa^2 v_\ve(x,t)}{\pa x^2} \ , \quad x\in\R, \ t>0,
\ee
\be \label{F2}
v_\ve(x,0) \ = \ v_0(x), \quad y\in\R.
\ee
The solution to (\ref{E2}), (\ref{F2}) is given by the formula
\be \label{G2}
v_\ve(x,T) \ = \ \int_{-\infty}^\infty G_\ve(x,y,0,T) v_0(y) \ dy, \quad x\in\R, \  T>0.
\ee

For any $t<T$ let $Y_\ve(s), \ s>t,$ be the solution to the initial value problem for the SDE
\be \label{H2}
dY_\ve(s) \ = \ b(Y_\ve(s),s) ds+\sqrt{\ve} \ dB(s), \quad Y_\ve(t)=y,
\ee
where $B(\cdot)$ is Brownian motion.  Then $G_\ve(\cdot,y,t,T)$ is the probability density for the random variable $Y_\ve(T)$.  In the case when the function $b(y,t)$ is linear in $y$ it is easy to see that (\ref{H2}) can be explicitly solved. Thus let $A:\R\ra\R$ be a continuous function and $b:\R\times\R\ra\R$ the function $b(y,t)=A(t)y-1$. The solution to (\ref{H2}) is then given by 
\be \label{I2}
Y_\ve(s) \ = \ \exp\left[\int_t^s A(s') ds'\right] y-\int_t^s \exp\left[\int_{s'}^s A(s'') ds''\right] ds'
+\sqrt{\ve}\int_t^s \exp\left[\int_{s'}^s A(s'') ds''\right] dB(s') \ . 
\ee
Hence the random variable $Y_\ve(T)$ conditioned on $Y_\ve(0)=y$ is Gaussian with mean
$m_{1,A}(T)y-m_{2,A}(T)$ and variance $\ve\sig_A^2(T)$, where $m_{1,A},m_{2,A}$ are given by (\ref{B7}) and $\sig^2_A$ by
\be \label{J2}
\sig_A^2(T)= \int_0^T \exp\left[2\int_{s}^T A(s') ds'\right] ds \ .  
\ee
The Green's function $G_\ve(x,y,0,T)$ is therefore explicitly given by the formula
\be \label{K2}
G_\ve(x,y,0,T)=\frac{1}{\sqrt{2\pi\ve\sig_A^2(T)}}\exp\left[-\frac{\{x+m_{2,A}(T)-m_{1,A}(T)y\}^2}{2\ve\sig_A^2(T)}\right] \ .
\ee

To obtain the formula (\ref{K2}) we have used the fact that the solution to the terminal value problem (\ref{B2}), (\ref{C2}) has a representation as an expectation value $u_\ve(y,t)=E[u_0(Y_\ve(T)) \ |  \ Y(t)=y \ ]$,  where $Y_\ve(\cdot)$ is the solution to the SDE (\ref{H2}).  The initial value problem (\ref{E2}), (\ref{F2}) also has a representation as an expectation value in terms of the  solution to the SDE 
\be \label{AZ2}
dX_\ve(s) \ = \ b(X_\ve(s),s) ds+\sqrt{\ve} \ dB(s), \quad X_\ve(T)=x, \ s<T.
\ee
run {\it backwards} in time. Thus in (\ref{AZ2}) $B(s),  \ s<T,$ is Brownian motion run backwards in time.   The  solution $v_\ve$ of (\ref{E2}), (\ref{F2})  has the representation
\be \label{BA2}
v_\ve(x,T) \ = \  E\left[\exp\left\{-\int^T_0\frac{\pa b(X_\ve(s),s)}{\pa x} \ ds\right\} v_0(X_\ve(0)) \ \ \Bigg| \ X_\ve(T)=x \  \right]  \ .
\ee

Next we consider the terminal value problem (\ref{B2}), (\ref{C2}) in the half space $y>0$ with Dirichlet boundary condition $u_\ve(0,t)=0, \ t<T$. In that case the solution $u_\ve(y,t)$  has the representation
\be \label{L2}
u_\ve(y,t) \ = \ \int_0^\infty G_{\ve,D}(x,y,t,T) u_T(x) \ dx, \quad y>0, \ t<T,
\ee
in terms of the Dirichlet Green's function $G_{\ve,D}$ for the half space.  Similarly the solution to (\ref{E2}), (\ref{F2}) in the half space $x>0$ with Dirichlet condition $v_\ve(0,t)=0, \ t>0,$ has the representation  
\be \label{M2}
v_\ve(x,T) \ = \ \int_{0}^\infty G_{\ve,D}(x,y,0,T) v_0(y) \ dy, \quad x>0, \  T>0.
\ee
The function $G_{\ve,D}(\cdot,y,t,T)$ is the probability density of the random variable $Y_\ve(T)$ for solutions $Y_\ve(s),  \ s>t,$ to (\ref{H2}) which have the property that $\inf_{t\le s\le T} Y_\ve(s)>0$.
No explicit formula for  $G_{\ve,D}(x,y,0,T)$ in the case of  linear $b(y,t)=A(t)y-1$ is known except when $A(\cdot)\equiv 0$.  In that case the method of images yields the formula
 \be \label{N2}
 G_{\ve,D}(x,y,0,T)=\frac{1}{\sqrt{2\pi\ve T}} \left\{ \exp\left[-\frac{(x-y+T)^2}{2\ve T}\right]-  
 \exp\left[-\frac{2x}{\ve}-\frac{(x+y-T)^2}{2\ve T}\right]\right\} \ .
 \ee
It follows from (\ref{K2}), (\ref{N2}) that 
\be \label{O2}
G_{\ve,D}(x,y,0,T)/G_{\ve}(x,y,0,T) \ = \  1- \exp[-2xy/\ve T] \ .
\ee
 We may interpret the formula (\ref{O2}) in terms of conditional probability for solutions $Y_\ve(s), \ s\ge 0,$ of (\ref{H2}) with $b(\cdot,\cdot)\equiv-1$. Thus we have that
\be \label{P2}
P(\inf_{0\le s\le T} Y_\ve(s)>0 \ | \ Y_\ve(0)=y, \ Y_\ve(T)=x) \ = \ 1-\exp[-2xy/\ve T]  \ .
\ee

We wish to generalize (\ref{P2}) to the case of linear $b(y,t)=A(t)y-1$ in a way that is uniform as $\ve\ra 0$. To see what conditions on the function $A(\cdot)$ are needed we consider for $x,y\in\R,t<T,$ the function $q(x,y,t)$ defined by the variational formula
\be \label{Q2}
q(x,y,t,T) = \min_{y(\cdot)} \left\{ \frac 1 2 \int^T_t \left[\frac{dy(s)}{ds} - b(y(s),s)\right]^2ds \ \Big| \ y(t) = y, \ y(T) = x \right\}.
\ee
The Euler-Lagrange equation for the minimizing trajectory $y(\cdot)$ of (\ref{Q2}) is  
\be \label{R2}
\frac d{ds} \left[ \frac{dy(s)}{ds} - b(y(s),s) \right] + \frac {\pa b}{\pa y}(y(s),s) \left[ \frac{dy(s)}{ds} - b(y(s),s) \right] = 0,  \quad t \le s \le T,
\ee
and we need to solve (\ref{R2}) for  the function $y(\cdot)$ satisfying the boundary conditions $y(t)=y, \ y(T)=x$. In the case $b(y,t)=A(t)y-1$ equation (\ref{R2}) becomes
\be \label{S2}
\left[ -\frac{d^2}{ds^2}+A'(s)+A(s)^2\right] y(s) \ = \ A(s), \quad t \le s \le T.
\ee
It is easy to solve (\ref{S2}) with the given boundary conditions explicitly. In fact taking $t=0$ we see from (\ref{R2})  that
\be \label{T2}
 \frac{dy(s)}{ds} - b(y(s),s)  \ = \  C(x,y,T)\exp\left[\int_s^T A(s')ds'\right] \ , \quad 0\le s\le T,
\ee
where the constant  $C(x,y,T)$ is given by the formula
\be \label{U2}
C(x,y,T) \ = \  [x+m_{2,A}(T)-m_{1,A}(T)y]/\sig_A^2(T) \ ,
\ee
with $m_{1,A}(T),m_{2,A}(T)$ as in (\ref{B7}) and $\sig_A^2(T)$ as in (\ref{J2}). It follows from (\ref{K2}), (\ref{Q2}), (\ref{T2}), (\ref{U2}) that the Green's function $G_\ve(x,y,0,T)$ is given by the formula
\be \label{V2}
G_\ve(x,y,0,T)=\frac{1}{\sqrt{2\pi\ve\sig_A^2(T)}}\exp\left[-q(x,y,0,T)/\ve\right] \ .
\ee

The minimizing trajectory $y(\cdot)$  for (\ref{Q2}) has probabilistic significance as well as the function $q(x,y,t,T)$. One can easily see that  for solutions $Y_\ve(s), \ 0\le s\le T,$ of (\ref{H2}) the random variable $Y_\ve(s)$ conditioned on $Y_\ve(0)=y, \ Y_\ve(T)=x,$ is Gaussian with mean and variance given by 
\be \label{W2}
E[ Y_\ve(s) \ | \ Y_\ve(0)=y, \ Y_\ve(T)=x] \ = \ y(s), \quad 0\le s\le T,
\ee
\be \label{X2}
{\rm Var}[ Y_\ve(s) \ | \ Y_\ve(0)=y, \ Y_\ve(T)=x] \ = \  \ve \sig_A^2(0,s)\sig_A^2(s,T)/\sig_A^2(T) \ ,
\ee
where the function $\sig_A^2(s,t)$ is defined by
\be \label{Y2}
\sig_A^2(s,t) \ = \ \int_s^t \exp\left[2\int_{s'}^t A(s'') ds''\right] ds' \quad {\rm for  \ }  s\le t. 
\ee
Let $m_{1,A}(s,t), \ m_{2,A}(s,t)$ be defined by
\be \label{Z2}
m_{1,A}(s,t)=\exp\left[\int_{s}^t A(s') ds'\right], \quad m_{2,A}(s,t)=
\int_s^t \exp\left[\int_{s'}^t A(s'') ds''\right] ds' \quad {\rm for  \ }  s\le t. 
\ee
The minimizing trajectory $y(\cdot)$ for the variational problem (\ref{Q2}) is explicitly given by the formula
\begin{multline} \label{AA2}
\sig_A^2(T)y(s) \ = \  xm_{1,A}(s,T)\sig_A^2(0,s) +ym_{1,A}(0,s)\sig_A^2(s,T) \\
+ \ m_{1,A}(s,T)m_{2,A}(s,T)\sig_A^2(0,s)-m_{2,A}(0,s)\sig_A^2(s,T) \ .
\end{multline}

Now the process  $Y_\ve(s), \ 0\le s\le T,$ conditioned on $Y_\ve(0)=y, \ Y_\ve(T)=x,$ is in fact a Gaussian process with covariance independent of $x,y$, 
\be \label{AB2}
{\rm Covar}[ Y_\ve(s_1),Y_\ve(s_2) \ | \ Y_\ve(0)=y, \ Y_\ve(T)=x] \ = \  \ve \Gamma_A(s_1,s_2) \ , \quad 0\le s_1, s_2\le T,
\ee 
where the symmetric function $\Gamma:[0,T]\times[0,T]\ra\R$   is given by the formula
\be \label{AC2}
\Gamma_A(s_1,s_2) \ = \ \frac{m_{1,A}(s_1,s_2)\sig_A^2(0,s_1)\sig_A^2(s_2,T)}{\sig_A^2(T)} \ , \quad 0\le s_1\le s_2\le T. 
\ee
The function $\Gamma_A$ is the Dirichlet Green's function for the operator  on the LHS of (\ref{S2}).  Thus one has that
\be \label{AD2}
\left[ -\frac{d^2}{ds_1^2}+A'(s_1)+A(s_1)^2\right] \Gamma_A(s_1,s_2) \ = \ \del(s_1-s_2), \quad 0<s_1,s_2 <T,
\ee
and $\Gamma_A(0,s_2)=\Gamma_A(T,s_2)=0$ for all $0<s_2<T$.  

We can obtain a representation of the conditioned process $Y_\ve(\cdot)$ in terms of  the white noise process, which is the derivative $dB(\cdot)$ of Brownian motion, by obtaining a factorization of $\Gamma$ corresponding to the factorization
\be \label{AE2}
-\frac{d^2}{ds^2}+A'(s)+A(s)^2 \ = \  \left[-\frac{d}{ds}-A(s)\right]\left[\frac{d}{ds}-A(s)\right] \  .
\ee
To do this we note that the boundary value problem
\be \label{AF2}
\left[\frac{d}{ds}-A(s)\right]u(s) \ =  \ v(s),  \quad 0<s<T, \quad u(0)=u(T)=0,
\ee
has a solution if and only if  the function $v:[0,T]\ra\R$ satisfies the orthogonality condition
\be \label{AG2}
\int_0^T \frac{v(s)}{m_{1,A}(s)} \ ds \ = \ 0.
\ee
Hence it follows from (\ref{AE2}) that we can solve the boundary value problem 
\be \label{AH2}
\left[-\frac{d^2}{ds^2}+A'(s)+A(s)^2 \right]u(s) \ =  \ f(s), \quad 0<s<T, \quad u(0)=u(T)=0,
\ee
by first finding the solution $v:[0,T]\ra\R$ to 
\be \label{AI2}
\left[-\frac{d}{ds}-A(s)\right]v(s) \ =  \ f(s),  \quad 0<s<T, 
\ee
which satisfies the orthogonality condition (\ref{AG2}).  Then we solve the differential equation in (\ref{AF2}) subject to the condition $u(0)=0$. 

The solution to (\ref{AG2}), (\ref{AI2}) is given by an expression
\be \label{AJ2}
v(s) \ =  \ K^*f(s) \ = \ \int_0^T k(s',s)f(s') \ ds' \ , \quad 0\le s\le T,
\ee
where the kernel $k:[0,T]\times[0,T]\ra\R$ is defined by
\begin{multline} \label{AK2}
k(s',s) \ = \ \frac{m_{1,A}(s,s')\sig^2(s',T)}{\sig_A^2(T)} \quad {\rm if} \ s'>s, \\
k(s',s) \ = \ \frac{\sig_A^2(s',T)}{m_1(s',s)\sig_A^2(T)}-\frac{1}{m_{1,A}(s',s)} \quad {\rm if} \ s'<s.
\end{multline}
If $v:[0,T]\ra\R$ satisfies the condition (\ref{AG2}) then
\be \label{AL2}
u(s) \ =  \ Kv(s) \ = \ \int_0^T k(s,s')v(s') \ ds' \ , \quad 0\le s\le T,
\ee
is the solution to (\ref{AF2}). It follows that the kernel $\Gamma_A$ of (\ref{AC2}) has the factorization $\Gamma_A=KK^*$, and so  the conditioned process $Y_\ve(\cdot)$ has the representation
\be \label{AM2}
Y_\ve(s) \ = \ y(s)+ \sqrt{\ve}\int_0^T k(s,s') \ dB(s')  \ , \quad 0\le s\le T,
\ee
where $y(\cdot)$ is the function (\ref{AA2}). In the  case $A(\cdot)\equiv 0$ equation (\ref{AM2}) yields the familiar representation
\be \label{AN2}
Y_\ve(s) \ =  \ \frac{s}{T} x+\left(1-\frac{s}{T}\right)y+\sqrt{\ve}\left[B(s)-\frac{s}{T}B(T)\right] \ , \quad 0\le s\le T,
\ee
for the Brownian bridge process.

We can obtain an alternative representation of the conditioned process $Y_\ve(\cdot)$ in terms of Brownian motion by considering a stochastic control problem.  Let $Y_\ve(\cd)$ be the solution to the stochastic differential equation
\be \label{AO2}
dY_\ve(s) = \la_\ve(\cd,s)ds + \sqrt{\ve} \; dB(s),
\ee
where $\la_\ve(\cd,s)$ is a non-anticipating function.  We consider the problem of minimizing the cost function given by the formula
\be \label{AP2}
q_\ve(x,y,t,T) = \min_{\la_\ve} E \left[ \frac 1 2 \int^T_t \left[ \la_\ve(\cd,s) - b( Y_\ve(s), s) \right]^2 \; ds \  \Big| \  Y_\ve(t) = y,\; Y_\ve(T) = x\right].
\ee
The minimum in (\ref{AP2}) is to be taken over all non-anticipating $\la_\ve(\cd,s)$, $t \le s < T$, which have the property that the solutions of (\ref{AO2}) with initial condition $Y_\ve(t) = y$ satisfy the terminal condition $Y_\ve(T) = x$ with probability 1.    Formally the optimal controller $\la^*$ for the problem is given by the expression
\be \label{AQ2}
\la_\ve(\cd,s) = \la^*_\ve  ( x, Y_\ve(s), s) =  b( Y_\ve(s), s) - \frac{\pa q_\ve}{\pa y} \; ( x, Y_\ve(s), s).
\ee

Evidently in the classical control case $\ve=0$ the solution to (\ref{AO2}), (\ref{AP2}) is the solution to the variational problem (\ref{Q2}). If $b(y,t)=A(t)y-1$ is a linear function of $y$ then one expects as in the case of LQ problems that the difference between the cost functions for the classical and  stochastic control problems is independent of $y$. Therefore from (\ref{K2}), (\ref{V2}) we expect that
\be \label{AR2}
 \la^*_\ve  ( x, y, t) \ = \ b(y,t)-\frac{\pa q(x,y,t,T)}{\pa y} \ = \ A(t)y-1-\frac{\pa}{\pa y}\frac{\{x+m_{2,A}(t,T)-m_{1,A}(t,T)y\}^2}{2\sig_A^2(t,T)}  \ .
\ee
It is easy to see that if we solve the SDE (\ref{AO2}) with controller given by (\ref{AR2}) and conditioned on $Y_\ve(t)=y$ then $Y_\ve(T)=x$ with probability $1$ and in fact the process $Y_\ve(s), \ t\le s\le T,$ has the same distribution as the process $Y_\ve(s), \ t\le s\le T,$ satisfying the SDE (\ref{H2}) conditioned on $Y_\ve(t)=y, \ Y_\ve(T)=x$. Thus we have obtained the Markovian representation for the conditioned process of (\ref{H2}). Note however that the stochastic control problem with cost function (\ref{AP2}) does not have a solution since the integral in (\ref{AP2}) is logarithmically divergent at $s=T$ for the process (\ref{AO2}) with optimal controller (\ref{AR2}). 

Solving (\ref{AO2}) with drift (\ref{AR2})  and $Y_\ve(0)=y$, we see on taking $t=0$ that (\ref{AM2}) holds with kernel $k:[0,T]\times[0,T]\ra\R$ given by 
\be \label{AS2}
k(s,s') \ = \ \frac{m_{1,A}(s',s)\sig_A^2(s,T)}{\sig_A^2(s',T)} \quad {\rm if \ } s'<s, \qquad k(s,s') \ = \ 0 \quad {\rm if \ } s'>s.
\ee
Observe that the kernel (\ref{AS2}) corresponds to the Cholesky factorization $\Gamma_A=KK^*$ of the kernel $\Gamma_A$ \cite{ciarlet}.  In the case $A(\cdot)\equiv 0$ equation (\ref{AS2}) yields the Markovian representation
\be \label{AT2}
Y_\ve(s) \ =  \ \frac{s}{T} x+\left(1-\frac{s}{T}\right)y+\sqrt{\ve}(T-s)\int_0^s \frac{dB(s')}{T-s'} \ , \quad 0\le s\le T,
\ee
for the Brownian bridge process. 

We can also express the ratio (\ref{O2}) of Green's functions for the linear case $b(y,t)=A(t)y-1$ in terms of the solution to a  PDE. Thus we assume $x>0$ and define
\be \label{AU2}
u(y,t) \ =  \ P(\inf_{t\le s\le T} Y_\ve(s) > 0 \ | \ Y_\ve(t)=y) \ , \quad y>0,t<T,
\ee
where $Y_\ve(\cdot)$ is the solution to the SDE (\ref{AO2}) with drift (\ref{AR2}).  Then $u(y,t)$ is the solution to the PDE
\be \label{AV2}
\frac{\pa u(y,t)}{\pa t} +\la_\ve^*(x,y,t)\frac{\pa u(y,t)}{\pa y}+\frac{\ve}{2}\frac{\pa^2 u(y,t)}{\pa y^2} \ = \ 0, \quad y>0,t<T,
\ee
with boundary and terminal conditions given by
\be \label{AW2}
u(0,t) \ = \ 0 \ {\rm for} \  t<T, \quad \lim_{t\ra T} u(y,t) \ = \ 1 \ {\rm for} \  y>0.
\ee
In the case $A(\cdot)\equiv 0$ the PDE (\ref{AV2}) becomes 
\be \label{AX2}
\frac{\pa u(y,t)}{\pa t} +\left(\frac{x-y}{T-t}\right) \ \frac{\pa u(y,t)}{\pa y}+\frac{\ve}{2}\frac{\pa^2 u(y,t)}{\pa y^2} \ = \ 0, \quad y>0,t<T.
\ee
Evidently the function $u$ defined by
\be \label{AY2}
u(y,t) \ = \ 1- \ \exp\left[-\frac{2xy}{\ve(T-t)}\right] \ , \quad t<T, y>0,
\ee
is the solution to (\ref{AW2}), (\ref{AY2}).  Observe that the RHS of (\ref{AY2}) at $t=0$ is the same as the RHS of (\ref{O2}).

\vspace{.1in}

\section{Estimates on the Dirichlet Green's function}
In this section we shall obtain estimates on the ratio of the Dirichlet to the full space Green's function in the case of linear drift $b(y,t)=A(t)y-1$. In particular we shall prove a limit theorem which generalizes the formula (\ref{O2}):
\begin{proposition}
Assume $b(y,t)=A(t)y-1$ where (\ref{A2}) holds and the function $A(\cdot)$ is non-negative.
Then for $\la,y, T>0$ the ratio of the Dirichlet to full space Green's function satisfies the limit
\be \label{A3}
\lim_{\ve\ra 0} \frac{G_{\ve,D}(\la\ve,y,0,T)}{G_{\ve}(\la\ve,y,0,T)} \ = \  1- \exp\left[-2\la\left\{1-\frac{m_{2,A}(T)}{\sig_A^2(T)}+\frac{m_{1,A}(T)y}{\sig_A^2(T)}\right\} \ \right] \ ,
\ee
where $m_{1,A}(T),m_{2,A}(T)$ are given by (\ref{B7}) and $\sig_A^2(T)$  by (\ref{J2}). 
\end{proposition}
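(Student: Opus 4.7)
The strategy is to interpret the ratio probabilistically as a survival probability for a conditioned Gaussian process, reduce it by a scale and time change to a Brownian bridge problem with a deterministic drift, and analyze the boundary layer of width $O(\ve)$ near $s=T$ via the classical hitting-time formula for Brownian motion with drift. By the bridge representation developed in Section 4,
\[
\frac{G_{\ve,D}(\la\ve,y,0,T)}{G_{\ve}(\la\ve,y,0,T)} \;=\; P\!\left(\inf_{0\le s\le T} Y_\ve(s) > 0 \ \Big| \ Y_\ve(0)=y,\ Y_\ve(T)=\la\ve\right),
\]
where $Y_\ve$ solves the SDE (\ref{H2}) with $b(y,s)=A(s)y-1$.

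First I would remove the linear drift by setting $Z_\ve(s) = Y_\ve(s)/m_{1,A}(0,s)$, which gives
\[
Z_\ve(s) \;=\; y - \frac{m_{2,A}(0,s)}{m_{1,A}(0,s)} + \sqrt{\ve}\int_0^s \frac{dB(r)}{m_{1,A}(0,r)}\,,
\]
and then time-change via $\tau=\phi(s):=\int_0^s dr/m_{1,A}(0,r)^2$. Using the identity $\sig_A^2(T)=m_{1,A}(T)^2\phi(T)$, one has $T':=\phi(T)=\sig_A^2(T)/m_{1,A}(T)^2$ and the stochastic integral becomes a standard Brownian motion $\sqrt{\ve}\,\tilde B(\tau)$. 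With $\psi=\phi^{-1}$ and $M(\tau):=m_{2,A}(0,\psi(\tau))/m_{1,A}(0,\psi(\tau))$, the event $\{\inf_s Y_\ve(s)>0\}$ becomes $\{\tilde B(\tau)>(M(\tau)-y)/\sqrt{\ve}\}$ for all $\tau\in[0,T']$, and the terminal condition becomes $\tilde B(T')=(\la\ve/m_{1,A}(T)-y+M(T'))/\sqrt{\ve}$. Conditioning on this endpoint via the standard decomposition $\tilde B(\tau)=\tilde B^{br}(\tau)+\tau \tilde B(T')/T'$, with $\tilde B^{br}$ a Brownian bridge from $0$ to $0$ on $[0,T']$, the survival probability takes the form
\[
P\!\left(\tilde B^{br}(\tau) > -\frac{L(\tau)}{\sqrt{\ve}} - \frac{\tau\la\sqrt{\ve}}{T' m_{1,A}(T)}\ \forall\,\tau\in[0,T']\right)\!,
\]
where $L(\tau):=y-M(\tau)+\tau(M(T')-y)/T'$ satisfies $L(0)=y$ and $L(T')=0$.

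The next step is to check strict positivity of $L$ on $[0,T')$ and compute its endpoint slope. Since $A\ge 0$, the function $s\mapsto m_{1,A}(0,s)$ is nondecreasing, so $M'(\tau)=m_{1,A}(0,\psi(\tau))$ is nondecreasing and $M$ is convex on $[0,T']$ with $M(0)=0$. Writing $L(\tau)=y(1-\tau/T')+[\tau M(T')/T'-M(\tau)]$, both terms are nonnegative (the second by convexity of $M$) and the first is strictly positive for $\tau<T'$, so $L>0$ on $[0,T')$. Using $M'(T')=m_{1,A}(T)$, $M(T')=m_{2,A}(T)/m_{1,A}(T)$ and $T'=\sig_A^2(T)/m_{1,A}(T)^2$, a direct calculation yields
\[
L'(T') \;=\; \frac{M(T')-y}{T'} - M'(T') \;=\; -m_{1,A}(T)\left[1-\frac{m_{2,A}(T)}{\sig_A^2(T)}+\frac{m_{1,A}(T)y}{\sig_A^2(T)}\right] \;=:\; -m_{1,A}(T)\,C,
\]
where $C$ is exactly the bracketed factor appearing in the exponent of (\ref{A3}); the hypothesis $A\ge0$ also ensures $\sig_A^2(T)\ge m_{2,A}(T)$, so $C>0$.

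The last step is the boundary-layer analysis. For $\tau$ bounded away from $T'$, the strict positivity of $L$ makes $-L(\tau)/\sqrt{\ve}\to-\infty$ uniformly, and a standard maximal estimate for the Brownian bridge shows the barrier is not hit there with probability $1-o(1)$. Inside the layer $\tau=T'-\ve u$, the identity $\text{Cov}(\tilde B^{br}(T'-\ve u_1),\tilde B^{br}(T'-\ve u_2))=\ve\min(u_1,u_2)[1-\ve\max(u_1,u_2)/T']$ shows that $W(u):=\tilde B^{br}(T'-\ve u)/\sqrt{\ve}$ converges in distribution on compacts to a standard Brownian motion with $W(0)=0$. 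Linearizing $L(T'-\ve u)=|L'(T')|\ve u+O(\ve^2)$, after dividing by $\sqrt{\ve}$ the constraint becomes $W(u)+|L'(T')|u > -\la/m_{1,A}(T)+o(1)$ for $u\ge 0$. Passing to the limit and invoking the classical formula $P\bigl(\inf_{u\ge 0}(W(u)+\mu u)>-\alpha\bigr)=1-e^{-2\mu\alpha}$ for $\mu,\alpha>0$, with $\mu=m_{1,A}(T)\,C$ and $\alpha=\la/m_{1,A}(T)$, gives $2\mu\alpha=2\la C$ and hence the stated limit. The main technical obstacle is the uniform tightness and dominated-convergence step that patches the boundary-layer regime to the bulk: this rests on the convexity-based quantitative lower bound on $L$ together with standard Brownian bridge comparison estimates.
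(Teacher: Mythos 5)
Your proposal is correct, and the route is genuinely different from the paper's. The paper works directly with the explicit representation (\ref{B3}) for the conditioned Gaussian process, decomposes the time interval into a boundary layer $[T-a\ve,T]$ and the bulk, and controls the error between the bridge process and a constant-drift Brownian motion there by explicit exponential-martingale bounds (Doob's inequality, with clever choices $a=\ve^{-\al}$, $b=\ve^\beta$, $3\al+2\beta<1$), separately estimating the probability of leakage in the bulk.  You instead first remove the drift and perform a deterministic time change $\tau=\int_0^s dr/m_{1,A}(0,r)^2$, which reduces the problem to a standard Brownian bridge against a deterministic barrier $L(\tau)$; you then extract the exponent from a local functional limit theorem (rescaling $u=(T'-\tau)/\ve$ makes the bridge converge to free Brownian motion, and linearizing $L$ at $\tau=T'$ supplies the constant drift).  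Your computations check out: $\phi(T)=\sig_A^2(T)/m_{1,A}(T)^2$ via $\sig_A^2(T)=m_{1,A}(T)^2\int_0^T ds/m_{1,A}(0,s)^2$; $M'(\tau)=m_{1,A}(0,\psi(\tau))$ nondecreasing because $A\ge0$, giving convexity of $M$ and hence strict positivity and concavity of $L$ on $[0,T')$; and $-L'(T')=m_{1,A}(T)\bigl[1-m_{2,A}(T)/\sig_A^2(T)+m_{1,A}(T)y/\sig_A^2(T)\bigr]$ combined with the first-passage formula $P(\inf_{u\ge0}(W(u)+\mu u)>-\al)=1-e^{-2\mu\al}$ delivers exactly the exponent $2\la\{1-m_{2,A}/\sig_A^2+m_{1,A}y/\sig_A^2\}$.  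What your approach buys is conceptual clarity: the convexity of $M$ gives a one-line reason the barrier is only touched in an $O(\ve)$ layer near $\tau=T'$, whereas the paper has to establish this by separate martingale estimates.  What the paper's approach buys is quantitative control: the proof is deliberately set up so that the same machinery yields the nonasymptotic bounds of Lemmas 5.1 and 5.2 (with explicit error terms in $\la$, $y/T$, $\ve/T$), which are what the subsequent sections actually use.  You rightly flag that the patching step (tightness of the rescaled bridge, domination of the bulk contribution) still needs filling in; using the concavity-based bound $L(\tau)\ge L(T'-a\ve)(T'-\tau)/(a\ve)$ for $\tau\in[T'-a\ve,T']$ and $L$ bounded below on $[0,T'-a\ve]$ together with the Gaussian maximal inequality (\ref{T3}) would close this gap cleanly, and $L\in C^2$ (since $A$ is continuous) justifies the $O(\ve^2)$ Taylor remainder on compacts in $u$.
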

Note that since we are assuming $A(\cdot)$ is non-negative in the statement of the proposition, it follows from (\ref{J2}) that $m_{2,A}(T)/\sig_A^2(T)\le 1$. Hence the  RHS of (\ref{A3}) always lies between $0$ and $1$. 
We can see why (\ref{A3}) holds from the representation (\ref{AK2}), (\ref{AM2}) for the conditioned process $Y_\ve(s), \ 0\le s\le T$.  Thus we have that
\be \label{B3}
Y_\ve(s) \ = \ y(s)+\sqrt{\ve}\left[ \ \frac{m_{1,A}(s)\sig_A^2(s,T)}{\sig_A^2(T)}\int_0^T \  \frac{dB(s')}{m_{1,A}(s')}-m_{1,A}(s)\int_s^T  \ \frac{dB(s')}{m_{1,A}(s')} \ \right] \ .
\ee
Since $\sig_A^2(s,T)=O(T-s)$ the conditioned process $Y_\ve(s)$ close to $s=T$ is approximately the same as
\be \label{C3}
Y_\ve(s) \ = \ \la\ve -y'(T)(T-s)-\sqrt{\ve}\int_s^T \ dB(s') \ .
\ee
Observe now from (\ref{AA2})  that
\be \label{D3}
-y'(T) \ = \ O(\ve)+1-\frac{m_{2,A}(T)}{\sig_A^2(T)}+\frac{m_{1,A}(T)y}{\sig_A^2(T)} \ .
\ee
Hence for $s$ close to $T$ the process $Y_\ve(s), \ s<T,$ is approximately Brownian motion with a constant drift. Thus let $Z_\ve(t), \ t>0,$ be the solution to the initial value problem  for the SDE
\be \label{E3}
dZ_\ve(t) \ = \  \mu dt+\sqrt{\ve} \  dB(t), \quad Z_\ve(0)=\la\ve \ ,
\ee
where we assume the drift $\mu$ is positive. Then from (\ref{C3}), (\ref{D3}) we see that  $Y_\ve(T-t)\simeq Z_\ve(t)$ if $\mu$ is given by the formula
\be \label{F3}
\mu \ = \ 1-\frac{m_{2,A}(T)}{\sig_A^2(T)}+\frac{m_{1,A}(T)y}{\sig_A^2(T)} \ .
\ee
Observe now that $P(\inf_{t>0} Z_\ve(t)<0)=e^{-2\la\mu}$, whence  the RHS of (\ref{A3}) is simply 
$P(\inf_{t>0} Z_\ve(t)>0)$ when $\mu$ is given by (\ref{F3}).  Since the time for which $Z_\ve(t)$ is likely to become negative is $t\simeq O(\ve)$ the approximations above are justified and so we obtain (\ref{A3}). 
\begin{proof}[Proof of Proposition 5.1]  Let $Y_\ve(s), \ 0\le s\le T,$ be given by (\ref{B3}) where $y(T)=\la\ve$. Then we have that for $0<a\ve\le T$, 
\be \label{G3}
P\left(\inf_{0\le s\le T} Y_\ve(s)>0\right) \ \le \ P\left(\inf_{0\le t\le a\ve} Y_\ve(T-t)>0\right) \ = \ 
P\left(\inf_{0<t<a\ve} [Z_\ve(t)+\tilde{Z}_\ve(t)]>0\right) \ ,
\ee
where $Z_\ve(\cdot)$ is the solution to (\ref{E3}) with $\mu$ given by (\ref{F3}) and $\tilde{Z}_\ve(\cdot)$ is given by the formula 
\begin{multline} \label{H3}
\tilde{Z}_\ve(t) \ = \  y(T-t)-y(T)+y'(T)t+\\
\sqrt{\ve}\left[ \ \frac{m_{1,A}(T-t)\sig_A^2(T-t,T)}{\sig_A^2(T)}\int_0^T \  \frac{dB(s')}{m_{1,A}(s')} 
+\int_{T-t}^T\left [1-\frac{m_{1,A}(T-t)}{m_{1,A}(s')}\right]  \ dB(s') \ \right] \ .
\end{multline}
We use the inequality
\be \label{I3}
P\left(\inf_{0<t<a\ve} [Z_\ve(t)+\tilde{Z}_\ve(t)]>0\right)  \ \le \ P\left(\inf_{0<t<a\ve} Z_\ve(t)>-b\la\ve\right) +P\left(\sup_{0<t<a\ve} \tilde{Z}_\ve(t)>b\la\ve\right)  \ ,
\ee
which holds for any $a,b>0$ satisfying $a\ve\le T$.

To estimate the first term on the RHS of (\ref{I3}) we observe by the method of images  that 
\begin{multline} \label{J3}
P\left(\inf_{0<t<a\ve} Z_\ve(t)<-b\la\ve\right) \ = \\  e^{-2\mu(1+b)\la}\frac{1}{\sqrt{2\pi}}\int_{[(1+b)\la-\mu a]/\sqrt{a}}^\infty e^{-z^2/2} \ dz  \ 
+ \ \frac{1}{\sqrt{2\pi}}\int^{-[(1+b)\la+\mu a]/\sqrt{a}}_{-\infty} e^{-z^2/2} \ dz \ .
\end{multline}
To estimate the second term we write $\tilde{Z}_\ve(t)$ in (\ref{H3}) as a sum of three quantities. The first of these is bounded as
\be \label{K3}
\sup_{0\le t\le a\ve} |y(T-t)-y(T)+y'(T)t| \ \le \  C[\la\ve+y+1]a^2\ve^2, \quad 0<a\ve \le T,
\ee
for a constant $C$ depending only on $A_\infty,T$.  The second is bounded as
\be \label{L3}
\sup_{0\le t\le a\ve}\left|\sqrt{\ve} \ \frac{m_{1,A}(T-t)\sig_A^2(T-t,T)}{\sig_A^2(T)}\int_0^T \  \frac{dB(s')}{m_{1,A}(s')} \right| \ \le \ Ca\ve^{3/2}\left|\int_0^T \  \frac{dB(s')}{m_{1,A}(s')} \right|  \ ,
\ee
where $C$ depends only on $A_\infty,T$. Finally the third quantity is bounded as
\begin{multline} \label{M3}
\sup_{0\le t\le a\ve}\left| \ \int_{T-t}^T\left [1-\frac{m_{1,A}(T-t)}{m_{1,A}(s')}\right]  \ dB(s')  \ \right| \ \le \ 
\sup_{0\le t\le a\ve}\left| \ \int_{T-t}^T\left [1-\frac{m_{1,A}(T)}{m_{1,A}(s')}\right]  \ dB(s') \ \right| \\
+ Ca\ve \sup_{0\le t\le a\ve}\left|\int_{T-t}^T \  \frac{dB(s')}{m_{1,A}(s')} \right|  \ , \quad {\rm where \ } C \ {\rm depends \ only \ on \ } A_\infty,T.
\end{multline}

We can estimate probabilities for the terms on the RHS of (\ref{L3}), (\ref{M3}) by using Martingale properties. Thus if $g:(-\infty,T)\ra\R$ is a continuous function we define $X(t), \ t\ge 0,$ by
\be \label{N3}
X(t) \ = \ \int_{T-t}^T   g(s) \  dB(s) \ .
\ee
Then for $\theta\in\R$
\be \label{O3}
X_\theta(t) \ = \ \exp\left[ \theta X(t)-\frac{\theta^2}{2}\int_{T-t}^T ds \  g(s)^2 \ \right] \quad {\rm is \ a \ Martingale \ and \ }  E[X_\theta(t)]=1.
\ee
Using the inequality
\be \label{P3}
P\left( \ |X(0)|>M \ \right) \ \le \ 2\exp\left[-\theta M+\frac{\theta^2}{2}\int_0^T ds \  g(s)^2 \ \right] \quad {\rm for \ } M,\theta>0,
\ee
and optimizing the RHS of (\ref{P3}) with respect to $\theta>0$ we conclude that
\be \label{R3}
P\left( \ a\ve^{3/2}\left|\int_0^T \  \frac{dB(s')}{m_{1,A}(s')} \right| > b\la\ve/4 \ \right) \ \le  \ 
2\exp\left[-Cb^2\la^2/a^2\ve\right] \ ,
\ee
where the constant $C>0$ depends only on $A_\infty,T$.  We use Doob's inequality to estimate probabilities for the terms on the RHS of (\ref{M3}). Thus we have for $\theta>0$ that
\begin{multline} \label{S3}
P\left( \ \sup_{0\le t\le t_0} X(t)>M \ \right) \ \le \ P\left( \ \sup_{0\le t\le t_0} X_\theta(t)>\exp\left[\theta M- \frac{\theta^2}{2}\int_{T-t_0}^T ds \  g(s)^2  \ \right] \ \right)  \\
\le \ \exp\left[-\theta M+ \frac{\theta^2}{2}\int_{T-t_0}^T ds \  g(s)^2  \ \right] \ . 
\end{multline}
Optimizing the term on the RHS of (\ref{S3}) with respect to $\theta>0$ we conclude that
\be \label{T3}
P\left( \ \sup_{0\le t\le t_0} |X(t)|>M \ \right) \ \le \  2\exp\left[- M^2\bigg/2\int_{T-t_0}^T ds \  g(s)^2  \ \right]  \ .
\ee
Hence we have from (\ref{T3}) for the first term on the RHS of (\ref{M3}) that
\be \label{U3}
P\left( \  \sup_{0\le t\le a\ve}\left| \ \int_{T-t}^T\left [1-\frac{m_{1,A}(T)}{m_{1,A}(s')}\right]  \ dB(s') \ \right|  \ > \ b\la\ve/4 \ \right) \ \le \ 2\exp\left[-Cb^2\la^2/a^3\ve\right] \ ,
\ee
where the constant $C>0$ depends only on $A_\infty,T$.  Similarly we have that if $C_1$ depends only on $A_\infty,T$ then
\be \label{V3}
P\left( \   C_1a\ve \sup_{0\le t\le a\ve}\left|\int_{T-t}^T \  \frac{dB(s')}{m_{1,A}(s')} \right|  \ > \ b\la\ve/4 \ \right) \ \le \ 2\exp\left[-C_2b^2\la^2/a^3\ve\right] \ ,
\ee
where the constant $C_2>0$ also depends only on $A_\infty,T$. 

We choose now $a=\ve^{-\al}, b=\ve^\beta$ for some $\al,\beta>0$. Since $\mu>0$ it follows from  
(\ref{J3})  that the first term on the RHS of (\ref{I3}) converges to $1-e^{-2\la\mu}$ as $\ve\ra 0$. We also see from the estimates of the previous paragraph that the second term on the RHS of (\ref{I3}) converges to $0$ as $\ve\ra 0$ provided $3\al+2\beta<1$. We have therefore shown that $\limsup_{\ve\ra 0}P\left(\inf_{0\le s\le T} Y_\ve(s)>0\right)$ is bounded above by the RHS of (\ref{A3}). 

To obtain the corresponding lower bound we use the inequality
\be \label{W3}
P\left(\inf_{0\le s\le T} Y_\ve(s)>0\right) \ \ge \ P\left(\inf_{T-a\ve\le s\le T} Y_\ve(s)>0\right)-P\left(\inf_{0\le s\le T-a\ve} Y_\ve(s)<0\right) \ .
\ee 
Next we use the inequality similar to (\ref{I3}) that
\be \label{X3}
P\left(\inf_{T-a\ve\le s \le T} Y_\ve(s)>0\right) \ \ge \ 
P\left(\inf_{0<t<a\ve} Z_\ve(t)>b\la\ve\right) -P\left(\inf_{0<t<a\ve} \tilde{Z}_\ve(t)<-b\la\ve\right)  \ .
\ee
Arguing as previously we see from (\ref{X3}) on choosing $a=\ve^{-\al},b=\ve^\beta$ with $3\al+2\beta<1$ that $ \liminf_{\ve\ra 0}P\left(\inf_{T-\ve^{1-\al}\le s\le T} Y_\ve(s)>0\right)$ is bounded below by the RHS of (\ref{A3}).  Next we need to obtain a bound on the second term on the RHS of
(\ref{W3}) when $a=\ve^{-\al}$ which vanishes as $\ve\ra 0$. Since $A(\cdot)$ is non-negative there is a positive constant $C$ depending only on $A_\infty,T$ such that  the function $y(\cdot)$ of (\ref{AA2}) satisfies an inequality  $y(s)\ge C(T-s)y$ for $0\le s\le T$. 
Hence  there is a positive constant $c$ depending only on $A_\infty,T$ such that
\begin{multline} \label{Y3}
P\left(\inf_{0\le s\le T-\ve^{1-\al}} Y_\ve(s)<0\right)    \ \le \ 
 P\left( \ \left|\int_0^T \  \frac{dB(s')}{m_{1,A}(s')} \right| > \frac{cy}{\sqrt{\ve}} \ \right) \\
 + P\left( \ \sup_{\ve^{1-\al}\le t\le T}\left|\frac{1}{t}\int_{T-t}^T \  \frac{dB(s')}{m_{1,A}(s')} \right| > \frac{cy}{\sqrt{\ve}} \ \right)  \ .
\end{multline}
We can bound the first term on the RHS of (\ref{Y3}) similarly to  (\ref{R3}).  We bound the second term by using the inequality
\be \label{Z3}
P\left(\sup_{\ve^{1-\al}\le t\le T} |X(t)|>cy/\sqrt{\ve}\right)  \ \le \  \sum_{k\ge1} P\left(\sup_{k\ve^{1-\al}\le t\le (k+1)\ve^{1-\al}} |X(t)|>cy/\sqrt{\ve}\right) \ . 
\ee
From (\ref{T3}) we see that for $k\ge 1$,
\be \label{AA3}
 P\left(\sup_{k\ve^{1-\al}\le t\le (k+1)\ve^{1-\al}} \left|\frac{1}{t}\int_{T-t}^T \  \frac{dB(s')}{m_{1,A}(s')} \right| \ >cy/\sqrt{\ve}\right) \ \le \  \exp\left[-\frac{c_1ky^2}{\ve^\al} \ \right]  \ ,
\ee
where $c_1>0$ depends only on $A_\infty,T$.  We conclude that the second term on the RHS of (\ref{W3})  converges when $a=\ve^{-\al}$ with $\al>0$ to zero as $\ve\ra 0$.  Hence $\liminf_{\ve\ra 0}P\left(\inf_{0\le s\le T} Y_\ve(s)>0\right)$ is bounded below by the RHS of (\ref{A3}).
\end{proof}
Next we wish to obtain estimates on the LHS of (\ref{A3}) which are uniform as $\la\ra 0$.
\begin{lem}
Assume the function $A(\cdot)$ is non-negative and that $0<\la\le 1, \ 0<\ve\le T, \ y>0$. Let $\Ga:\R^+\times\R^+\ra\R^+$ be the function $\Ga(a,b)=1$ if $b>a^{-1/4}$ and otherwise $\Ga(a,b)=a^{1/8}$. Then there is a constant $C$ depending only on $A_\infty T$ such that 
\begin{multline} \label{AB3}
 \frac{G_{\ve,D}(\la\ve,y,0,T)}{G_{\ve}(\la\ve,y,0,T)} \ \le \\
   1- \exp\left[-2\la\left\{1-\frac{m_{2,A}(T)}{\sig_A^2(T)}+\frac{m_{1,A}(T)y}{\sig_A^2(T)}\right\} \ \right] +C\la\Ga\left(\frac{\ve}{T},\frac{y}{T}\right) \left[1+\frac{y}{T}\right] \ .
\end{multline}
\end{lem}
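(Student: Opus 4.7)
The plan is to quantify the upper-bound half of the proof of Proposition 5.1, tracking how each error term depends on $\la$, $\ve$, $y$, and $T$, and then choosing the free parameters $a,b$ differently in the two regimes encoded by the function $\Ga$. First, I may assume that the claimed RHS of (\ref{AB3}) is less than one, since otherwise the bound is trivial from the representation (\ref{AU2}). With that reduction, I follow the strategy that led to (\ref{G3})--(\ref{I3}): use the representation (\ref{B3}) of the conditioned process $Y_\ve(\cdot)$ with $y(T)=\la\ve$, split off the window $T-a\ve\le s\le T$ on which $Y_\ve(T-t)\simeq Z_\ve(t)+\tilde Z_\ve(t)$, and apply the union bound
\[ P(\inf_{0\le s\le T}Y_\ve(s)>0)\ \le\ P(\inf_{0<t<a\ve}Z_\ve(t)>-b\la\ve)+P(\sup_{0<t<a\ve}\tilde Z_\ve(t)\ge b\la\ve). \]

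For the first probability, formula (\ref{J3}) yields $1-e^{-2\mu(1+b)\la}\Phi_1-\Phi_2$, where $\mu$ is given by (\ref{F3}), and $\Phi_1,\Phi_2$ are the Gaussian integrals with parameters depending on $a,b,\la,\mu$. Using $e^{-2\mu\la}(1-e^{-2\mu b\la})\le 2\mu b\la$ and discarding the favorable $-\Phi_2$, the discrepancy with the target main term $1-e^{-2\mu\la}$ is at most $2\mu b\la+(1-\Phi_1)+P(\sup_{0<t<a\ve}\tilde Z_\ve(t)\ge b\la\ve)$. Here $\mu\le C(1+y/T)$ under the hypothesis $A\ge 0$, so the first piece contributes $\le Cb\la(1+y/T)$; $(1-\Phi_1)$ is Gaussian-small $\le\exp(-c\mu^2 a)$ once $a$ is large enough that $\mu\sqrt a$ dominates $(1+b)\la/\sqrt a$. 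The supremum probability splits via (\ref{K3})--(\ref{M3}) into a deterministic drift remainder, bounded by $C(\la\ve+y+1)a^2\ve^2$ and therefore absorbed into the event of probability zero provided $(1+y)a^2\ve\le c b\la$, plus two stochastic integrals controlled by the Doob/Chernoff bounds (\ref{R3}), (\ref{U3}), (\ref{V3}), each $\le 2\exp[-cb^2\la^2/(a^3\ve)]$.

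The choice of parameters is dictated by matching the $\mu b\la$ contribution to the target $C\la\Ga(\ve/T,y/T)[1+y/T]$, which forces $b=\Ga(\ve/T,y/T)$. In the ``large $y$'' regime $y/T>(\ve/T)^{-1/4}$ I take $b=1$; here $\mu$ is of order $y/T$, so very moderate $a$ (polynomial in $\xi^{-1}:=(\ve/T)^{-1}$) suffices to make both the Gaussian tail and the exponential martingale tails smaller than $\la y/T$. In the ``small $y$'' regime $y/T\le(\ve/T)^{-1/4}$ I take $b=(\ve/T)^{1/8}$ together with $a=(\ve/T)^{-\al}$ for a small positive $\al$; the inequalities $(1+y)a^2\ve\le cb\la$ and $\exp[-cb^2\la^2/(a^3\ve)]\le c\la\xi^{1/8}$ then force a balancing of the form $3\al+2\beta<1$ familiar from the end of the proof of Proposition~5.1 (with $\beta=1/8$), while the Gaussian tail $\exp(-c\mu^2 a)$ is controlled since $\mu$ is bounded below by a positive constant depending only on $A_\infty T$.

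The main obstacle will be the simultaneous balancing of $a$: the Gaussian tail $(1-\Phi_1)$ wants $a$ \emph{large}, while the deterministic drift restriction $(1+y)a^2\ve\lesssim b\la$ and the martingale exponent $b^2\la^2/(a^3\ve)$ both want $a$ \emph{small}. The normalization $\la\Ga(\ve/T,y/T)(1+y/T)\le C^{-1}$ (from the reduction in the first paragraph) is what makes all three compatible, and handling the borderline values of $\la$ that violate the deterministic constraint will require falling back on the trivial bound $G_{\ve,D}/G_\ve\le 1$ for the regions of parameter space in which the target error already exceeds~$1$. Once these cases are all assembled, the three contributions telescope into the claimed error $C\la\Ga(\ve/T,y/T)[1+y/T]$ and (\ref{AB3}) follows.
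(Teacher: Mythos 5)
There is a genuine gap in the plan: the union-bound strategy from the proof of Proposition 5.1, which you propose to quantify, cannot produce an error term proportional to $\la$, and the lemma requires exactly that. The martingale tail estimates (\ref{R3}), (\ref{U3}), (\ref{V3}) bound $P(\sup_{0<t<a\ve}\tilde{Z}_\ve(t)>b\la\ve)$ by expressions of the form $\exp[-cb^2\la^2/(a^j\ve)]$, which tend to $1$ as $\la\ra 0$ (for fixed $a,b,\ve$). Thus the defect in the union bound (\ref{I3}) is of order $1$ rather than order $\la$ when $\la$ is small, while the target error $C\la\Ga(\ve/T,y/T)[1+y/T]$ vanishes linearly in $\la$; your suggested fall-back on the trivial bound $G_{\ve,D}/G_\ve\le 1$ is unavailable precisely in this regime, since the right-hand side of (\ref{AB3}) is then far below $1$. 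A second, smaller error: you claim the drift $\mu$ of (\ref{F3}) is bounded below by a positive constant depending only on $A_\infty T$, but when $A\equiv 0$ one has $m_{2,A}(T)=\sig_A^2(T)=T$, so $\mu=y/T$ can be arbitrarily small; the Gaussian tail you denote $1-\Phi_1$ is then of order $1/2$ no matter how large $a$ is taken.

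The paper's proof of Lemma 5.1 avoids the union bound for exactly this reason. After the deterministic time change (\ref{AC3}), which turns the non-constant-coefficient martingale part of $Y_\ve$ into a true Brownian motion $\tilde{B}$, the conditioned process is compared to a drifted Brownian motion $Z_\ve$ whose drift $\mu_{\rm rand}$ in (\ref{AM3}) absorbs the fluctuation of $\tilde{Z}_\ve$ rather than peeling it off probabilistically; the exit identity (\ref{AN3}) and the resulting bound (\ref{AO3}), namely $P(\inf Z_\ve>0\,|\,Z_\ve(0)=\la'\ve)\le 1-e^{-2\mu\la'}+2\la'/\sqrt{2\pi a'}$, have error linear in $\la'$ uniformly in $\mu$. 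The large-fluctuation event is then treated via the stopping time (\ref{AH3}) and the independence property (\ref{AK3}), producing the sum (\ref{AI3}) in which each term carries an explicit factor of $\la$ through (\ref{AS3})--(\ref{AU3}). To repair your argument you would need to replace the union bound with this structured comparison and introduce the stopping-time decomposition to retain $\la$-proportionality.
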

\begin{proof}
We make the change of variable $s\leftrightarrow t$ in which
 \be \label{AC3}
 \frac{ds}{dt} \ =  \  -\left[ \ \frac{m_{1,A}(s)}{m_{1,A}(T)} \ \right]^2 \ , \quad s(0) \ = \  T.
 \ee
 Hence $s\simeq T-t$ if $t$ is small and  
 \be \label{AD3}
m_{1,A}(T) \int_s^T  \ \frac{dB(s')}{m_{1,A}(s')}  \ = \  \int_0^t d\tilde{B}(t') \quad {\rm where \ } \tilde{B}(\cdot) \ {\rm is \  a \ Brownian \  motion.}
 \ee
Letting $s(\tilde{T})=0$, we see from (\ref{B3}), (\ref{AD3})  that $Y_\ve(s)=\tilde{Y}_\ve(t)$ where
\be \label{AE3}
\tilde{Y}_\ve(t) \ = \  \tilde{y}(t)+ \sqrt{\ve}\left[ \ \frac{m_{1,A}(s)\sig^2(s,T)}{m_{1,A}(T)\sig^2(T)}\int_0^{\tilde{T}} d\tilde{B}(t') -\frac{m_{1,A}(s)}{m_{1,A}(T)}\int_0^t d\tilde{B}(t')  \ \right] \ ,
\ee
and $\tilde{y}(t)=y(s)$, where $y(\cdot)$ is the function (\ref{AA2}). 
We consider any  $a$ for which  $0<a\ve\le\tilde{T}$ and observe as in (\ref{G3}) that if $M>0$ then
\begin{multline} \label{AF3}
P\left(\inf_{0\le s\le T} Y_\ve(s)>0\right) \ \le \ P\left(\inf_{0\le t\le a\ve} \tilde{Y}_\ve(t)>0\right) \\ 
\le \  
 P\left(\inf_{0\le t\le a\ve} \tilde{Y}_\ve(t)>0; \  \sup_{0\le t\le a\ve}\left| \int_0^t d\tilde{B}(t')  \right|\le M  \right) \\
  +   P\left(\inf_{0\le t\le a\ve} \tilde{Y}_\ve(t)>0; \  \sup_{0\le t\le a\ve}\left| \int_0^t d\tilde{B}(t')  \right|> M  \right)   \ .
\end{multline}
The first term on the RHS of (\ref{AF3}) is bounded above by $  P\left(\inf_{0\le t\le a\ve} \tilde{Y}_{0,\ve}(t)>0\right)$
where $\tilde{Y}_{0,\ve}(t)$ is given from (\ref{AE3})   by the formula
\be \label{AG3}
\tilde{Y}_{0,\ve}(t) \ = \ \tilde{y}(t)+\frac{C\sqrt{\ve}Mt}{T}+\sqrt{\ve} \ \frac{m_{1,A}(s)\sig_A^2(s,T)}{m_{1,A}(T)\sig_A^2(T)} \int_{a\ve}^{\tilde{T}} d\tilde{B}(t') -\sqrt{\ve}\frac{m_{1,A}(s)}{m_{1,A}(T)}  \int_0^t d\tilde{B}(t')   \ ,
\ee
with $C$ in (\ref{AG3}) depending only on $AT$.  To estimate the second term on the RHS of (\ref{AF3}) we introduce the stopping time $\tau$ defined by
\be \label{AH3}
\tau = \inf\left\{ \ t<\tilde{T}: \  \left|  \int_0^t d\tilde{B}(t')\right| > M    \right\} \ .
\ee
Hence the second term is bounded above by $ P\left(\inf_{ 0\le t\le\tau} \tilde{Y}_\ve(t)>0; \  \tau<a\ve \ \right)$. Observe now that for any $M_1>0$, 
\begin{multline} \label{AI3}
 P\left(\inf_{ 0\le t\le\tau} \tilde{Y}_\ve(t)>0; \  \tau<a\ve \ \right) \ = \\
  \sum_{n=1}^\infty  P\left(\inf_{ 0\le t\le\tau} \tilde{Y}_\ve(t)>0; \  \tau<a\ve, \ (n-1)M_1\le\sup_{\tau\le t\le \tau+\tilde{T}}\left|\int_\tau^t d\tilde{B}(t')\right| < nM_1    \ \right) \ \le \\
   \sum_{n=1}^\infty  P\left(\inf_{0\le t\le \tau} \tilde{Y}_{n,\ve}(t)>0; \  \tau<a\ve  \ \right)P\left( \ (n-1)M_1\le \left|\sup_{\tau\le t\le \tau+\tilde{T}}\int_\tau^t d\tilde{B}(t')\ \right| < nM_1    \ \right) \\
  \ = \    \sum_{n=1}^\infty  P\left(\inf_{0\le t\le \tau} \tilde{Y}_{n,\ve}(t)>0; \  \tau<a\ve  \ \right)P\left( \ (n-1)M_1\le \left|\sup_{0\le t\le \tilde{T}}\int_0^t d\tilde{B}(t')\ \right| < nM_1    \ \right)
    \ ,
\end{multline}
where $\tilde{Y}_{n,\ve}$ is given by the formula
\be \label{AJ3}
\tilde{Y}_{n,\ve}(t) \ = \ \tilde{y}(t)+\frac{C\sqrt{\ve}(M+nM_1)t}{T}- \sqrt{\ve}\frac{m_{1,A}(s)}{m_{1,A}(T)}  \int_0^t d\tilde{B}(t')  \ ,
\ee
and the constant $C$ depends only on $A_\infty T$. Note that in (\ref{AI3}) we are using the fact that the variables
\be \label{AK3}
\tau \   \ {\rm and \ } \{\tilde{B}(t): \   0<t\le \tau\} \quad {\rm are \ independent \ of \ the \ variable \ }    \left|\sup_{\tau\le t\le \tau+\tilde{T}}\int_\tau^t d\tilde{B}(t')\ \right|    \ .
\ee

To estimate $  P\left(\inf_{0\le t\le a\ve} \tilde{Y}_{0,\ve}(t)>0\right)$
 we compare $\tilde{Y}_{0,\ve}(\cdot)$ to Brownian motion with constant drift as in (\ref{E3}).   It follows  from  (\ref{AG3})   that
 \be \label{AL3}
 P\left(\inf_{0\le t\le a\ve} \tilde{Y}_{0,\ve}(t)>0\right)  \ \le \  E\left[ \ P\left(\inf_{0\le t\le  a\ve} Z_\ve(t)>0 \   \Big| \ \mu=\mu_{\rm rand}, \ Z_\ve(0)=\la \ve[1+Ca\ve/T]  \ \right) \ \right] \ ,
 \ee
 where $\mu_{\rm rand}$ is the random variable
 \be \label{AM3}
 \mu_{\rm rand} \ = \  1-\frac{m_{2,A}(T)}{\sig_A^2(T)}+\frac{m_{1,A}(T)y}{\sig_A^2(T)}+\frac{Ca\ve}{T}\left[1+\frac{y}{T}\right] + \frac{C\la\ve}{T}+ \frac{C\sqrt{\ve}}{T}\left[M+ \left|\int_{a\ve}^{\tilde{T}} d\tilde{B}(t') \right|\right] \ ,
 \ee
 and $C>0$ is a constant depending only on $A_\infty T$. To bound the RHS of (\ref{AL3}) we use an identity similar to  (\ref{J3}), 
 \begin{multline} \label{AN3}
P\left(\inf_{0<t<a'\ve} Z_\ve(t)>0 \ \big| \ Z_\ve(0)=\la'\ve \ \right) \ = \\ \left
\{1- e^{-2\mu\la'}\right\}\frac{1}{\sqrt{2\pi}}\int_{[\la'-\mu a']/\sqrt{a'}}^\infty e^{-z^2/2} \ dz  \ 
+ \ \frac{1}{\sqrt{2\pi}}\int^{[\la'-\mu a']/\sqrt{a'}}_{[-\la'-\mu a']/\sqrt{a'}} e^{-z^2/2} \ dz \ .
\end{multline}
From (\ref{AN3}) we obtain the upper bound 
\be \label{AO3}
P\left(\inf_{0<t<a'\ve} Z_\ve(t)>0 \ \big| \ Z_\ve(0)=\la'\ve \ \right) \ \le \ 
1- e^{-2\mu\la'} + \frac{2\la'}{\sqrt{2\pi a'}}  \ .
\ee
Using (\ref{AO3}) we estimate the RHS of (\ref{AL3})  when $a=\min\left[(T/\ve)^\al, \ \tilde{T}/\ve\right]$  for some $\al$ satisfying $0<\al<1$.  In that case $\la'=\la[1+Ca\ve/T]\le \la[1+C]$ for some constant $C$ depending only on $A_\infty T$.    Taking $M=C_1\sqrt{T}$ in (\ref{AM3}) where $C_1$ depends only on $A_\infty T$ we conclude from (\ref{AL3}), (\ref{AO3}) that for $0<\la\le 1, \ 0<\ve\le T, $
\begin{multline} \label{AP3}
P\left(\inf_{0\le t\le a\ve} \tilde{Y}_{0,\ve}(t)>0\right)  \ \le \\ 
 1- \exp\left[-2\la\left\{1-\frac{m_{2,A}(T)}{\sig_A^2(T)}+\frac{m_{1,A}(T)y}{\sig_A^2(T)}\right\} \ \right]+
 C_2\la \left[ \left(\frac{\ve}{T}\right)^{1-\al}\left\{1+\frac{y}{T}\right\}+\left(\frac{\ve}{T}\right)^{\al/2}\right]  \ ,
\end{multline}
where $C_2$ in (\ref{AP3}) depends only on $A_\infty T$. 

Next we estimate the probabilities on the RHS of (\ref{AI3}). Evidently we have from (\ref{AJ3}) that
\be \label{AQ3}
\tilde{Y}_{n,\ve}(\tau) \ = \ \tilde{y}(\tau)+\frac{C\sqrt{\ve}(M+nM_1)\tau}{T}\pm M\sqrt{\ve}\frac{m_{1,A}(s(\tau))}{m_{1,A}(T)}  \ .
\ee
We choose $M_1=\sqrt{T}$ in (\ref{AQ3}) and $M=C_1\sqrt{T}$ for a constant $C_1$ depending only on $A_\infty T$ so that $M\sqrt{\ve}/m_1(T)>2\ve$.  Since $Y_{n,\ve}(\tau)>0$, it follows that if (\ref{AQ3}) holds with the $-$ sign then there is a constant $c>0$ depending only on $A_\infty T$ such that
\be \label{AR3}
\tau>  \ \tau_n \ = \ cT\sqrt{\frac{\ve}{T}}\left[1+\frac{y}{T}+n \sqrt{\frac{\ve}{T}} \  \right]^{-1} \ .
\ee
Observe now from (\ref{AR3}) that if $\al<1/2$ then $\tau_n>a\ve$ provided
\be \label{BB3}
1+\frac{y}{T} +n\sqrt{\frac{\ve}{T}} \ \le \  2c_1\left(\frac{T}{\ve}\right)^{1/2-\al} \quad {\rm for \ } c_1>0 \ {\rm depending  \ only \ on \ } A_\infty T. 
\ee
Since $\tau<a\ve$ it follows that (\ref{AQ3}) can hold with the minus sign only if 
\be \label{BC3}
1+\frac{y}{T}  \ \ge \  c_1\left(\frac{T}{\ve}\right)^{1/2-\al}  \quad {\rm or \ } n \ \ge \ c_1\left(\frac{T}{\ve}\right)^{1-\al}  \ .
\ee
In the case when $\tau_n<a\ve$ we see from (\ref{AJ3}) that there is a constant $C$ depending only on $A_\infty T$ and
\be \label{AS3}
 P\left(\inf_{ 0\le t\le\tau_n} \tilde{Y}_{n,\ve}(t)>0  \ \right) \ \le \ 
 P\left(\inf_{0\le t\le  \tau_n} Z_\ve(t)>0 \   \Big| \ \mu=\mu_n, \ Z_\ve(0)=\la \ve[1+Ca\ve/T]  \ \right) \ , 
\ee
where $Z_\ve(\cdot)$ is the solution to the SDE (\ref{E3}). The drift  $\mu_n$ is given by the formula
\be \label{AT3}
\mu_n \ = \  C\left[ 1+\frac{y}{T} +n\sqrt{\frac{\ve}{T}} \  \right] \quad {\rm where  \ }C \ {\rm depends \ only \ on \ } A_\infty T.
\ee
It follows then from (\ref{AO3}), (\ref{AS3}), (\ref{AT3}) that
\be \label{AU3}
 P\left(\inf_{ 0\le t\le\tau_n} \tilde{Y}_{n,\ve}(t)>0  \ \right) \ \le \  C_1\la\left[ 1+\frac{y}{T} +n\sqrt{\frac{\ve}{T}} + \left(\frac{\ve}{\tau_n}\right)^{1/2}\  \right] \ \le \ C_2\la\left[ 1+\frac{y}{T} +n\sqrt{\frac{\ve}{T}} \  \right] 
\ee
for some constants $C_1,C_2$ depending only on $A_\infty T$.  We conclude from (\ref{AU3}) that
\begin{multline} \label{AV3}
 \sum_{n\ge c(T/\ve)^{1-\al}}  P\left(\inf_{0\le t\le \tau_n} \tilde{Y}_{n,\ve}(t)>0   \ \right)P\left( \ (n-1)M_1\le \left|\sup_{0\le t\le \tilde{T}}\int_0^t d\tilde{B}(t')\ \right| < nM_1    \ \right) \\
 \le \  C\la\sum_{n\ge c(T/\ve)^{1-\al}} \left[ 1+\frac{y}{T} +n\sqrt{\frac{\ve}{T}} \  \right]  e^{-n^2/2}
  \ \le \ C_1\la\left(1+\frac{y}{T}\right) \exp\left[ \ -c_1\left(\frac{T}{\ve}\right)^{2(1-\al)} \right] \ ,
 \end{multline}
 where the constants $C_1,c_1$ depend only on $A_\infty T$.  
 
 We consider next the situation where (\ref{AQ3}) holds with the plus sign.  One sees that
 \begin{multline} \label{AW3}
  P\left(\inf_{ 0\le t\le\tau} \tilde{Y}_{n,\ve}(t)>0; \ \tau<a\ve,  \ \int_0^\tau d\tilde{B}(t') \ dt'=-M \ \right) \\ \le \  
P\left(\inf_{0\le t\le  \tau} Z_\ve(t)>0, \ \tau<a\ve, \  \ Z_\ve(\tau)\ge M\sqrt{\ve} \   \Big| \ \mu=\mu_n, \ Z_\ve(0)=\la \ve[1+Ca\ve/T]  \ \right)   \ ,
 \end{multline}
 where $ \mu_n$ is given by (\ref{AT3}). Observe that the RHS of (\ref{AW3}) is bounded by the probability that the diffusion $Z_\ve(\cdot)$ started at $\la\ve[1+O(\ve^{1-\al})]$ exits the interval $[0,C_1T(\ve/T)^{1/2}]$ through the rightmost boundary in time less than $T(\ve/T)^{1-\al}$. This probability is bounded by $K(\ve/T, n,y/T)\la$ for some function $K$ which has the property that $\lim_{\ve\ra 0} K(\ve/T, n,y/T)=0$ provided $\al<1/2$. To find an expression for $K$ we first choose $C_1$ depending only on $A_\infty T$ large enough so that $Z_\ve(0)<C_1T(\ve/T)^{1/2}/2$ for any $\ve $ satisfying $0<\ve\le T$. It is easy to see that for $0<\la'<\La'$, 
 \be \label{AX3}
 P\left( 0<Z_\ve(t)<\La'\ve, \ t<\tau, \ Z_\ve(\tau)=\La'\ve \ \big| \ Z_\ve(0)=\la'\ve \ \right) \ = \ 
 \frac{1-e^{-2\mu\la'}}{1-e^{-2\mu\La'}} \ .
 \ee
 We apply (\ref{AX3}) with $\La'=C_1(T/\ve)^{1/2}, \ \mu=\mu_n$ and $\la'=\la[1+Ca\ve/T]$, whence $\mu\La'\ge c$ for some positive constant $c$ depending only on $A_\infty T$.  
We conclude from (\ref{AX3}) that the function $K$ satisfies the inequality
\be \label{AY3}
K(\ve/T, n,y/T) \ \le  \  C\left[ 1+\frac{y}{T} +n\sqrt{\frac{\ve}{T}} \  \right]
\ee
for some constant $C$ depending only on $A_\infty T$. 

To show that $\lim_{\ve\ra 0}K(\ve/T, n,y/T)=0$ we assume $\la'<\La'/2$ and use the inequality
\begin{multline} \label{AZ3}
 P\left( \inf_{0<t<a\ve} Z_\ve(t)>0, \ \sup_{0<t<a\ve}  Z_\ve(t)\ge\La'\ve \ \big| \ Z_\ve(0)=\la'\ve \ \right) \ \le \\
   P\left( 0<Z_\ve(t)<\La'\ve/2, \ t<\tau, \ Z_\ve(\tau)=\La'\ve/2 \ \big| \ Z_\ve(0)=\la'\ve \ \right) \ 
   P\left(  \sup_{0<t<a\ve}  Z_\ve(t)\ge\La'\ve \ \big| \ Z_\ve(0)=\La'\ve/2 \ \right) \ . 
\end{multline}
The second probability on the RHS of (\ref{AZ3}) can be bounded as
\be \label{BA3}
 P\left(  \sup_{0<t<a\ve}  Z_\ve(t)\ge\La'\ve \ \big| \ Z_\ve(0)=\La'\ve/2 \ \right) \ \le \  C\exp\left[-\frac{\La'^2}{32a}\right] \ 
\ee
for some universal constant $C$ provided $\mu a<\La'/4$. Observe now from (\ref{AT3}) that the condition $\mu_n a<\La'/4$  is implied by (\ref{BB3}). 
We conclude from (\ref{AY3}), (\ref{BA3}) that if (\ref{BB3}) holds then
\be \label{BD3}
K(\ve/T, n,y/T) \ \le  \  C_2\left[ 1+\frac{y}{T} +n\sqrt{\frac{\ve}{T}} \  \right]\exp\left[-c_2\left(\frac{T}{\ve}\right)^{1-\al} \right]
\ee
for some positive constants $C_2,c_2$ depending only on $A_\infty T$. If (\ref{BB3}) does  not hold we can argue as before using (\ref{BC3}), (\ref{AY3}) to obtain an inequality similar to (\ref{AV3}).  The inequality (\ref{AB3}) follows now from (\ref{AP3}),  (\ref{AV3}), (\ref{AY3})  on choosing $\al=1/4$.  
\end{proof}
\begin{lem}
Assume the function $A(\cdot)$ is non-negative and that $0<\la\le 1, \ 0<\ve\le T, \ y>0$. Then there are positive constants $C,c$ depending only on $A_\infty T$ such that   if $\ga=c(T/\ve)^{1/8}(y/T)\ge 5$ then
\be \label{BF3}
 \frac{G_{\ve,D}(\la\ve,y,0,T)}{G_{\ve}(\la\ve,y,0,T)} \ \ge \ 
  [1+e^{-\ga^2/4}]^{-2} \left(1- \exp\left[-\frac{2\la}{1+C(\ve/T)^{1/8}}\left\{1-\frac{m_{2,A}(T)}{\sig_A^2(T)}+\frac{m_{1,A}(T)y}{\sig_A^2(T)}\right\} \ \right]  \ \right) \ .
  \ee
\end{lem}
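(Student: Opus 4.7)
The plan is to establish the lower bound as the mirror image of Lemma 5.2's upper bound: pass to the time-reversed representation $\tilde Y_\ve(t)$ of (\ref{AE3}) via the change of variable (\ref{AC3}), focus attention on the endpoint window $0\le t\le a\ve$ with $a=(T/\ve)^{1/4}$, and on a high-probability event bound $\tilde Y_\ve$ \emph{from below} by a Brownian motion $Z_\ve$ with constant positive drift, for which the explicit reflection-principle identity (\ref{AN3}) gives a lower bound of the form $1-e^{-2\la\mu}$.

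First I would split, as in (\ref{W3}),
\[
P\bigl(\inf_{0\le s\le T}Y_\ve(s)>0\bigr)\ \ge\ P\bigl(\inf_{0\le t\le a\ve}\tilde Y_\ve(t)>0\bigr)\ -\ P\bigl(\inf_{0\le s\le T-a\ve}Y_\ve(s)<0\bigr),
\]
and dispose of the subtractive term by the exponential-martingale and Doob estimates (\ref{O3})--(\ref{T3}) together with the deterministic lower bound $y(s)\ge C(T-s)y$ used after (\ref{X3}); this error is exponentially small in $(T/\ve)^{1/4}$ and absorbed into the $e^{-\ga^2/4}$ terms of (\ref{BF3}). For the main contribution I condition on the good event
\[
G\ =\ \Bigl\{\ \Bigl|\int_{a\ve}^{\tilde T}d\tilde B(t')\Bigr|\le \tfrac{\ga}{2}\sqrt{\tilde T},\ \ \sup_{0\le t\le a\ve}\Bigl|\int_0^t d\tilde B(t')\Bigr|\le \tfrac{\ga}{2}\sqrt{a\ve}\ \Bigr\}.
\]
The two defining constraints involve independent Brownian increments (an increment after time $a\ve$ versus increments before time $a\ve$), so Gaussian tail bounds and Doob's inequality furnish $P(G)\ge [1+e^{-\ga^2/4}]^{-2}$ after carefully optimising the thresholds; this is the origin of the multiplicative prefactor in (\ref{BF3}).

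On $G$, the representation (\ref{AE3}), the derivative formula (\ref{D3}) for $y'(T)$, and the Taylor-remainder estimate (\ref{K3}) for $y(T-t)-y(T)+y'(T)t$ together furnish a pointwise lower bound $\tilde Y_\ve(t)\ge Z_\ve(t)$ for $0\le t\le a\ve$, where $Z_\ve$ is a Brownian motion with starting point $Z_\ve(0)=\la\ve$ and constant drift
\[
\mu_-\ =\ \frac{1}{1+C(\ve/T)^{1/8}}\Bigl(1-\tfrac{m_{2,A}(T)}{\sig_A^2(T)}+\tfrac{m_{1,A}(T)y}{\sig_A^2(T)}\Bigr).
\]
The $(1+C(\ve/T)^{1/8})^{-1}$ factor absorbs two sources of correction: the ``$\la'$-window'' contribution $2\la'/\sqrt{2\pi a}$ arising in the reflection identity (\ref{AO3}) at $a=(T/\ve)^{1/4}$, which produces an additive error of size $O(\la(\ve/T)^{1/8})$; and the smaller drift-like perturbations coming from the Gaussian and deterministic corrections to $\tilde y(t)$ on $G$. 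Applying (\ref{AN3}) to $Z_\ve$ with $\mu=\mu_-$ and $a'=a$ then yields $P(\inf_{0<t<a\ve}Z_\ve(t)>0)\ge 1-\exp[-2\la\mu_-]$, up to an error that is negligible once $\mu_- a\gg 1$; the hypothesis $\ga\ge 5$ guarantees this along with the smallness of the Gaussian tail factors $e^{-\ga^2/4}$.

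The main obstacle is arranging the good set $G$ so that three requirements hold simultaneously: the drift/starting-point corrections take the precise $1/(1+C(\ve/T)^{1/8})$ shape appearing in (\ref{BF3}); the tail bound on $P(G^c)$ converts to the \emph{multiplicative} factor $[1+e^{-\ga^2/4}]^{-2}$, rather than the naive additive $1-2e^{-\ga^2/4}$ that a union bound would produce; and the conditioning must respect the independence structure between the tail Gaussian $\int_{a\ve}^{\tilde T}d\tilde B$ and the local Brownian motion $\{\tilde B(t):0\le t\le a\ve\}$, so that the explicit identity (\ref{AN3}) remains applicable to the comparison diffusion $Z_\ve$ after conditioning on $G$.
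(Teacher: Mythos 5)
Your plan captures the right top-level shape (time-reverse to $\tilde Y_\ve(t)$, zoom in to a window $[0,a\ve]$ near $t=T$, compare to a constant-drift Brownian motion, finish with a reflection identity), but the specific decomposition you propose cannot produce the form of (\ref{BF3}), and the paper's actual proof is structured differently in two essential ways.

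First, the subtractive split you open with,
$P(\inf Y>0)\ge P(\inf_{0\le t\le a\ve}\tilde Y>0)-P(\inf_{0\le s\le T-a\ve}Y<0)$,
cannot work here because the subtracted term does not vanish as $\la\to 0$: it depends only on $\ve/T$ and $y/T$, while the main term $1-e^{-2\la\mu}=O(\la)$ can be made arbitrarily small. For small $\la$ the subtraction becomes negative, and (\ref{BF3}) must hold uniformly down to $\la\to 0$. (Indeed, (\ref{BF3}) is used precisely in the limit $\la\to 0$ in Lemma 6.2 and Lemma 7.2.) The paper avoids this by working entirely \emph{multiplicatively}: it constructs an event $\mathcal E$ on which the process $\tilde Y_\ve$ is \emph{deterministically} positive on all of $[a\ve,\tilde T]$, uses the deterministic growth $\tilde y(t)\ge cty/T$ together with Brownian increments that are controlled by \emph{growing barriers} $c_1 k\sqrt{T}(\ve/T)^{1/2-\al}(y/T)$ over successive blocks $[a\ve,(k+1)a\ve]$, $k\ge 1$, and then factorizes $P(\{\tilde Z_\ve>0\text{ on }(0,a\ve]\}\cap\mathcal E)$ as a product using independence of the Brownian increments before and after $a\ve$. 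Your single bound $|\int_{a\ve}^{\tilde T}d\tilde B|\le\tfrac{\ga}{2}\sqrt{\tilde T}$ does not deterministically force $\tilde Y_\ve>0$ on $[a\ve,\tilde T]$, so the ``far'' contribution would have to be handled additively, which is precisely what fails.

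Second, the squared prefactor $[1+e^{-\ga^2/4}]^{-2}$ does not come from a single estimate of $P(G)$. It is a product of two independent pieces: (i) $P(\mathcal E)=P(\mathcal E_\ga)\ge[1+e^{-\ga^2/4}]^{-1}$ from (\ref{BM3})--(\ref{BN3}), and (ii) the Gaussian window factor $\frac{1}{\sqrt{2\pi}}\int_{-\ga}^{\ga}e^{-z^2/2}\,dz\ge[1+e^{-\ga^2/4}]^{-1}$ in (\ref{BS3}). Moreover, (\ref{BS3}) is a different reflection formula from (\ref{AN3}): it has an additional endpoint window $|Z_\ve(a\ve)-Z_\ve(0)-a\ve\mu|<\ga\ve\sqrt{a}$, which is exactly what allows the comparison from $\tilde Z_\ve$ back to $Z_\ve$ and is the source of the second Gaussian factor. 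Finally, the paper takes $\al=1/2$ (not $1/4$), and the $1/8$ exponent in $(1+C(\ve/T)^{1/8})^{-1}$ arises from the explicit choice $c_1=c(\ve/T)^{1/8}$ for the barrier amplitude, balanced in (\ref{BT3})--(\ref{BU3}) against the drift penalty $-C_3 c_1 y/T$ and the $\ga\ge 5$ requirement; it is not the $2\la'/\sqrt{2\pi a}$ window error from (\ref{AO3}), which appears only in the upper-bound argument of Lemma 5.1.
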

\begin{proof}
We choose $a=\min\left[(T/\ve)^\al, \tilde{T}/\ve\right]$ with $0<\al<1$ as in Lemma 5.1 and observe from (\ref{AA2}), (\ref{AC3}) that there is a constant $c>0$ depending only on $A_\infty T$ such that  $\tilde{y}(t)\ge cty/T$ for $0\le  t\le \tilde{T}$.  Hence there exists a constant $c_1>0$ depending only on $A_\infty T$ such that the process $\tilde{Y}_\ve(\cdot)$ of (\ref{AE3}) satisfies:
\begin{multline} \label{BG3}
\tilde{Y}_\ve(t)>0 \quad {\rm for \ } a\ve\le t\le \tilde{T} \ {\rm \ if \ for \ } k=1,2,.., \\
 \left|\int_0^{a\ve} d\tilde{B}(t')\right| \ < \ c_1\sqrt{T}\left(\frac{\ve}{T}\right)^{1/2-\al} \frac{y}{T} \quad {\rm and \ }  \sup_{a\ve\le t\le (k+1)a\ve}\left|\int_{a\ve}^t d\tilde{B}(t')\right| \ \le \
  c_1k\sqrt{T}\left(\frac{\ve}{T}\right)^{1/2-\al} \frac{y}{T} \ .
\end{multline}
It follows from (\ref{BG3}) that
\be \label{BH3}
P\left(\inf_{0\le s\le T} Y_\ve(s)>0\right) \ = \ P\left(\inf_{0\le t\le \tilde{T}} \tilde{Y}_\ve(t)>0\right)  \ \ge 
 \  P\left(\inf_{0\le t\le a\ve} \tilde{Y}_\ve(t)>0 \ ; \mathcal{E}\right) \ ,
\ee
where $\mathcal{E}$ is the event defined by the second line of (\ref{BG3}). It is easy to see from (\ref{AE3}) that for $0<t\le a\ve$ on the event $\mathcal{E}$ there is a constant $C>0$ depending only on $A_\infty T$ such that
\be \label{BI3}
\tilde{Y}_\ve(t)>0 \ {\rm if \ } \  \ \tilde{Z}_\ve(t)=\frac{\tilde{y}(t)}{1+Ca\ve/T}-Cc_1t \frac{y}{T} -\sqrt{\ve}\int_0^td\tilde{B}(t')  \ > 0,
\ee
where $c_1$ is the constant of (\ref{BG3}).  We conclude from (\ref{BG3}), (\ref{BI3}) that
\begin{multline} \label{BJ3}
P\left(\inf_{0\le s\le T} Y_\ve(s)>0\right) \ \ge \\
 P\left( \ \tilde{Z}_\ve(t)>0 , \ 0<t\le a\ve; \  \left|\int_0^{a\ve} d\tilde{B}(t')\right| \ < \ c_1\sqrt{T}\left(\frac{\ve}{T}\right)^{1/2-\al} \frac{y}{T}  \ \right) \ P(\mathcal{E}) \ .
\end{multline}

In order to bound $P(\mathcal{E})$ from below we consider for $\ga>0$ the event $\mathcal{E}_\ga$ defined by
\be \label{BK3}
 \left|\int_0^{1} d\tilde{B}(t')\right|  \ < \ \ga \ \quad {\rm and \ }  \sup_{1\le t\le (k+1)}\left|\int_{1}^t d\tilde{B}(t')\right| \ < \ k\ga \quad {\rm for \ } k=1,2,... 
\ee
Then we have that
\be \label{BL3}
P(\mathcal{E}) \ = \ P(\mathcal{E}_\ga) \quad {\rm where \ } \ga \ = \ c_1\left(\frac{T}{\ve}\right)^{\al/2}\left(\frac{y}{T}\right) \ .
\ee 
Using the fact that
\be \label{BM3}
P\left(  \    \sup_{1\le t\le (k+1)}\left|\int_{1}^t d\tilde{B}(t')\right| \ > \ k\ga       \   \right) \ \le \  4e^{-k\ga^2/2} \ ,
\ee
we conclude that
\be \label{BN3}
P(\mathcal{E}_\ga) \ \ge \ [1+e^{-\ga^2/4} \ ]^{-1} \quad {\rm if \ } \ga \ge 5.
\ee

We bound from below the first probability on the RHS of (\ref{BJ3}) by comparing it to the constant drift Brownian motion (\ref{E3}).  To do this we use the inequality   
\be \label{BO3}
\sig_A^2(T)y(s) \ \ge \ xm_{1,A}(s,T)\sig_A^2(0,s)+ym_{1,A}(0,s)\sig_A^2(s,T)+[\sig_A^2(0,s)-m_{2,A}(0,s)]\sig_A^2(s,T) \ ,
\ee
which follows from (\ref{AA2}) and the assumption that $A(s)\ge 0, \ 0\le s\le T$. Since the function $s\ra \sig_A^2(0,s)-m_{2,A}(0,s)$ is increasing we conclude from (\ref{AC3}), (\ref{BO3}) that there is a constant $C_1>0$ depending only on $A_\infty T$ such that for $0\le t\le a\ve$, 
\be \label{BP3}
\tilde{y}(t) \ \ge \ \frac{\la\ve+\mu_\ve t}{1+C_1a\ve/T}  \quad {\rm where \ } \mu_\ve\ = \ \frac{m_{1,A}(T)y}{\sig_A^2(T)}+\frac{\sig_A^2(0,s(a\ve))-m_{2,A}(0,s(a\ve))}{\sig_A^2(T)} \ .
\ee
It follows  from (\ref{BI3}), (\ref{BP3}) that for $0\le t\le a\ve$ there is a constant $C_2>0$ depending only on $A_\infty T$ such that 
\be \label{BQ3}
\tilde{Z}_\ve(t) \ \ge \ Z_\ve(t) \quad {\rm with  \ }  Z_\ve(0)=\frac{\la\ve}{1+C_2a\ve/T} \ , \  
 \mu \ = \ \frac{\mu_\ve}{1+C_2a\ve/T} 
 -C_2c_1 \frac{y}{T}  \  .
\ee
Hence the first probability on the RHS of (\ref{BJ3}) is bounded below by
\be \label{BR3}
P\left( \ Z_\ve(t)>0, \ 0<t\le a\ve; |Z_\ve(a\ve)-Z_\ve(0)-a\ve\mu|<\ga\ve\sqrt{a} \ \Big| \ Z_\ve(0)=\frac{\la\ve}{1+C_2a\ve/T} \ \right) \ ,
\ee
where $\ga$ is as in (\ref{BL3}). 

To bound the probability in (\ref{BR3})  we assume that the constant $c_1$ in (\ref{BL3}) is small enough so that $\mu>0$ and $\ga<\mu\sqrt{a}$.   Then similarly to  (\ref{J3}), (\ref{AN3}) we have that
 \begin{multline} \label{BS3}
P\left(\inf_{0<t<a\ve} Z_\ve(t)>0; \ |Z_\ve(a\ve)-\la'\ve-a\ve\mu|<\ga\ve\sqrt{a}  \ \big| \ Z_\ve(0)=\la'\ve \ \right) \ = \\ \left
\{1- e^{-2\mu\la'}\right\}\frac{1}{\sqrt{2\pi}}\int_{2\la'/\sqrt{a}-\ga}^{\ga} e^{-z^2/2} \ dz  \ 
+ \ \frac{1}{\sqrt{2\pi}}\int^{2\la'/\sqrt{a}-\ga}_{-\ga} e^{-z^2/2} \ dz \\
-e^{-2\mu\la'}\frac{1}{\sqrt{2\pi}}\int^{2\la'/\sqrt{a}+\ga}_{\ga} e^{-z^2/2} \ dz  \ \ge \ 
\left\{1- e^{-2\mu\la'}\right\}\frac{1}{\sqrt{2\pi}}\int_{-\ga}^{\ga} e^{-z^2/2} \ dz \ .
\end{multline}
We take $\la'=\la/[1+C_2a\ve/T]$ in (\ref{BS3}) and choose $\al=1/2$. Hence the drift $\mu$ of (\ref{BQ3}) satisfies the inequality 
\be\label{BT3}
\mu \ \ge \   \frac{1}{1+C_3\sqrt{\ve/T}}\left\{1-\frac{m_{2,A}(T)}{\sig_A^2(T)}+\frac{m_{1,A}(T)y}{\sig_A^2(T)}\right\} 
  -C_3c_1 \frac{y}{T}-C_3\left(\frac{\ve}{T}\right)^{1/2} \ ,
\ee
for some constant $C_3>0$ depending only on $A_\infty T$.  We choose now $c_1=c(\ve/T)^{1/8}$ where $c>0$ depends only on $A_\infty T$. It is clear that if $\ga\ge 5$ then the RHS of (\ref{BT3}) is bounded below by
\be \label{BU3}
 \frac{1}{1+C_4(\ve/T)^{1/8}}\left\{1-\frac{m_{2,A}(T)}{\sig_A^2(T)}+\frac{m_{1,A}(T)y}{\sig_A^2(T)}\right\} \quad {\rm where \ } C_4 \ {\rm depends \ only \ on \ } A_\infty T.
\ee 
The inequality (\ref{BF3}) follows now from (\ref{BN3}),  (\ref{BS3}), (\ref{BU3}).
\end{proof}

\vspace{.1in}

\section{Convergence as $\ve\ra 0$ of solutions to the diffusive CP Model}
\begin{lem}
Let $c_\ve(x,t),  \La_\ve(t), \ 0<x,t<\infty,$ be the solution to the diffusive CP system  (\ref{G1}), (\ref{H1}) with non-negative initial data $c_0(x), \ 0<x<\infty,$ which is a locally integrable function satisfying
\be \label{B4}
\int_0^\infty (1+x)c_0(x) \ dx<\infty, \quad \int_0^\infty xc_0(x) \ dx \ =  \ 1.
\ee
 Then for any $T>0$ there are positive constants $C_1,C_2$ depending only on $T$ and $c_0(\cdot)$  such that  
 \be \label{C4}
 C_1\le \La_\ve(t)\le C_2\quad  {\rm for \ } 0<\ve \le 1, \ 0\le t\le T. 
 \ee
 In addition the set of functions $\{\La_\ve:[0,T]\ra\R: \ 0<\ve\le 1\}$  form an equicontinuous family. 
Denote by $c_0(x,t),  \La_0(t), \ 0<x,t<\infty,$ the solution to the CP system (\ref{A1}),  (\ref{B1}) with $\ve=0$ and  initial data $c_0(x), \ 0<x<\infty$.  Then for all $x,t \ge 0$
\begin{eqnarray} \label{D4}
\lim_{\ve\ra 0}w_\ve(x,t)\ &=& \  w_0(x,t) \ ,  \\
\lim_{\ve\ra 0}\ \La_\ve(t) \ &=& \ \La_0(t) \ , \label{E4}
\end{eqnarray}
where $w_\ve$ is given in terms of $c_\ve$ by (\ref{AX7}). 
The limit in (\ref{D4}), (\ref{E4}) is uniform for $(x,t)$ in any finite rectangle $0<x\le x_0, \ 0<t\le T$. 
\end{lem}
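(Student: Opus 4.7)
The plan is to use the linear Green's function representation from Section~4 together with the boundary layer estimates of Section~5. Fix $\ve\in(0,1]$ and treat $A(s)=1/\La_\ve(s)$ as given; then (\ref{G1}) with Dirichlet boundary condition is linear and (\ref{M2}) gives
\begin{equation}
c_\ve(x,t) \ = \ \int_0^\infty G_{\ve,D}(x,y,0,t)\, c_0(y)\,dy, \qquad w_\ve(x,t) \ = \ \int_0^\infty c_0(y)\int_x^\infty G_{\ve,D}(x',y,0,t)\,dx'\,dy,
\end{equation}
where $G_{\ve,D}$ is the Dirichlet Green's function on $\R^+$ for the drift $b(y,t)=y/\La_\ve(t)-1$. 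The full-space kernel $G_\ve$ is Gaussian (\ref{K2}), the ratio $G_{\ve,D}/G_\ve$ has the pointwise limit (\ref{A3}) from Proposition~5.1, and Lemmas~5.1--5.2 supply the two-sided bounds on this ratio which are uniform in $\ve$.

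For the a priori bounds on $\La_\ve$, the monotonicity remarked after (\ref{J1}) gives $\La_\ve(t)\ge 1/\int_0^\infty c_0(x)\,dx$. For the upper bound, write $\La_\ve(t)=1/J_\ve(t)$ with $J_\ve(t)=\int_0^\infty c_\ve(x,t)\,dx$ and integrate (\ref{G1}) in $x$ to get $dJ_\ve/dt=-(\ve/2)\pa_x c_\ve(0,t)$. Differentiating the representation at $x=0$ and applying Lemma~5.1 in the limit $\la\to 0^+$ (valid since $A=1/\La_\ve\ge0$) yields
\begin{equation}
\frac{\ve}{2}\,\pa_x G_{\ve,D}(0,y,0,t) \ \le \ C\left[1+\frac{m_{1,A}(t)y}{\sig_A^2(t)}\right] G_\ve(0,y,0,t).
\end{equation}
Integration against $c_0(y)\,dy$, using that $G_\ve(0,y,0,t)$ is a Gaussian of total mass $O(1)$ concentrating near $y=m_{2,A}(t)/m_{1,A}(t)$ together with the moment hypothesis (\ref{B4}), bounds $\ve\pa_x c_\ve(0,t)$ uniformly on any interval where $\La_\ve$ is already controlled. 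A short-time plus bootstrap step then propagates the bound to $[0,T]$, giving $J_\ve(t)\ge C_1'>0$ and hence the upper bound on $\La_\ve$.

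The same estimate combined with (\ref{J1}) produces $|d\La_\ve/dt|\le C$ on $[0,T]$, so $\{\La_\ve\}_{0<\ve\le 1}$ is uniformly Lipschitz and equicontinuous. By Arzela--Ascoli, extract a subsequence $\ve_n\to0$ along which $\La_{\ve_n}\to\La_*$ uniformly on $[0,T]$ for some Lipschitz $\La_*$. Along this subsequence, Proposition~5.1 and the explicit form (\ref{K2}) give pointwise convergence in $y$ of the integrand defining $w_{\ve_n}(x,t)$ to the indicator of $\{y>F_{1/\La_*}(x,t)\}$, while Lemma~5.2 together with the Gaussian tail provide a uniform majorant. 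Dominated convergence then yields $w_{\ve_n}(x,t)\to w_0(F_{1/\La_*}(x,t),0)$, and passing the mass constraint (\ref{H1}) to the limit reproduces the CP identity (\ref{E7}) for $\La_*$. Uniqueness for the CP system forces $\La_*=\La_0$; since every subsequential limit is $\La_0$, the full family converges, yielding (\ref{D4}) and (\ref{E4}).

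The main obstacle is the first step. Because of the Dirichlet boundary layer of width $O(\ve)$ the derivative $\pa_x c_\ve(0,t)$ genuinely scales like $\ve^{-1}$, as Proposition~5.1 already shows, so what must be controlled is the finite functional of $\La_\ve$ multiplying $\ve^{-1}$. Since this functional involves the unknown $\La_\ve$ itself through the Gaussian $G_\ve$ in Lemma~5.1, the bound is essentially self-referential and only closes after the bootstrap; the nonnegativity $A(\cdot)\ge0$ needed for Lemmas~5.1 and~5.2 to apply as stated is crucial here.
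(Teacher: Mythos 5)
Your overall scaffolding --- Ascoli--Arzela on a uniformly bounded equicontinuous family, dominated convergence in the Green's function representation, then uniqueness of the limiting CP system to upgrade subsequential to full convergence --- matches the paper's final step. But the route you take to the a priori bound (\ref{C4}) and to equicontinuity is genuinely different from the paper's and has a gap that I don't think can be closed as written.

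The paper never estimates the boundary derivative $\pa_x c_\ve(0,t)$ in the proof of this lemma; it does not use Lemmas 5.1--5.2 here at all. Instead it bounds $w_\ve(0,t)=\int_0^\infty c_\ve(x,t)\,dx$ \emph{directly from below} using the probabilistic representation
\[
w_\ve(x,t)=\int_0^\infty P\Bigl(Y_\ve(t)>x,\ \inf_{0\le s\le t} Y_\ve(s)>0 \ \big|\ Y_\ve(0)=y\Bigr)c_0(y)\,dy,
\]
observing that the drift satisfies $A(\cdot)=1/\La_\ve(\cdot)\le 1/\La_0(0)$ (this is the only place $A_\infty$ enters, so no upper bound on $\La_\ve$ is needed a priori), and that for mass starting above $\La_0(0)/2$ the survival probability up to time $\La_0(0)/2$ is bounded below uniformly in $0<\ve\le 1$. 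Combined with (\ref{G4}) this yields $w_\ve(0,\La_0(0)/2)\ge 1/C_2$ with no bootstrap, and the estimate iterates in fixed time steps. Equicontinuity is obtained by the same soft mechanism, via $w_\ve(0,t+\Delta t)\ge(1-\Delta t)\,w_\ve(x(\Delta t),t)$ and uniform continuity of $w_\ve(\cdot,t)$; the paper does \emph{not} prove a uniform Lipschitz bound for $\La_\ve$.

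Your alternative goes through $\frac{d}{dt}J_\ve(t)=-\frac{\ve}{2}\pa_x c_\ve(0,t)$ and the Lemma 5.1 pointwise bound on $G_{\ve,D}/G_\ve$. Two things break. First, Lemma 5.1 applied as $\la\to 0$ gives $\frac{\ve}{2}\pa_x c_\ve(0,t)\le\int_0^\infty(\mu(y,t)+C\Gamma(\ldots))\,G_\ve(0,y,0,t)\,c_0(y)\,dy$, but $G_\ve(0,\cdot,0,t)$ has peak height $\sim(\ve\sig_A^2(t))^{-1/2}$; integrating it against a function that is merely locally integrable does not give an $\ve$-uniform $O(1)$ bound, contrary to your parenthetical ``total mass $O(1)$'' claim. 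Second, even granting such a bound, the bootstrap does not close: to deduce $J_\ve(t)\ge C_1'>0$ you must show $\int_0^t\frac{\ve}{2}\pa_x c_\ve(0,s)\,ds\le J_\ve(0)-C_1'$, i.e.\ the \emph{absorbed mass} is strictly below the initial mass. An upper bound on the integrand, integrated, can easily exceed $J_\ve(0)$; nothing in Lemma 5.1 forces the time integral to stay quantitatively short of $J_\ve(0)$. So the argument establishes nothing stronger than the trivial inequality $J_\ve(t)\le J_\ve(0)$.

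Consequently the claim $|d\La_\ve/dt|\le C$ on $[0,T]$ for $0<\ve\le 1$ and general locally integrable $c_0$ is unsupported (and I believe false without further regularity of $c_0$). The paper's weaker conclusion --- equicontinuity via $w_\ve(0,t+\Delta t)\ge(1-\Delta t)w_\ve(x(\Delta t),t)$ --- is exactly what Ascoli--Arzela needs and is what you should prove. If you replace the ODE/boundary-layer step by the paper's probabilistic survival estimate, your remaining structure goes through (though the invocation of Proposition 5.1 for pointwise convergence of $w_\ve$ at fixed $x>0$ is unnecessary; (\ref{I2}) and (\ref{K2}) alone suffice, and the majorant is simply $c_0(y)$ since probabilities are bounded by $1$).
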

\begin{proof}
 It follows from (\ref{J1}) that $\La_\ve(t)$ is an increasing function of $t$, whence the lower bound in (\ref{C4}) follows.  We first prove the upper bound for the CP model (\ref{A1}), (\ref{B1}) corresponding to $\ve =0$.  We see from  (\ref{B1}),  (\ref{C1}) that
\be \label{G4}
\int_{\La_0(0)/2}^\infty xc_0(x) \ dx \ \ge \frac{1}{2}\int_0^\infty xc_0(x) \ dx  \ = \ \frac{1}{2} \ .
\ee
Hence from (\ref{C7}) there is a positive constant $1/C_2$ depending only on $c_0(\cdot)$ such that $w_0(\La_0(0)/2,0)\ge 1/C_2$. It follows then from (\ref{B7}), (\ref{D7}) that $w_0(0,t) \ge 1/ C_2$ for $0\le t\le \La_0(0)/2$, whence  (\ref{B1}), (\ref{C1}) implies that $\La_0(t)\le C_2$ for $0\le t\le \La_0(0)/2$.  Furthermore we see from (\ref{C7}) that $\La_0(t)$ is continuous in the interval $0\le t\le \La_0(0)/2$. Since $\La_0(t)$ is an increasing function of $t$ we can extend this argument in a finite number of steps to any interval $0\le t\le T$.  We have proven (\ref{C4}) in the case $\ve=0$. 

To prove the upper bound in (\ref{C4}) for $0<\ve\le 1$ we use the representation 
\be \label{H4}
w_\ve(x,t)  \ = \ 
\int_0^\infty P\left(Y_\ve(t)>x; \inf_{0\le s\le t}Y_\ve(s) >0 \  \big| \ Y_\ve(0)=y\right) c_0(y)  dy \ ,
\ee
where $Y_\ve(s)$ is the solution to the SDE (\ref{H2}) with $b(y,s)=y/\La_\ve(s)-1$. Since $\La_\ve(s)\ge\La_\ve(0)=\La_0(0)$ it follows from (\ref{I2}) that for any $\del>0,  t>0$ there is a positive constant  $p_1$ depending only on $\del,t,\La_0(0)$ such that
\be \label{I4}
P\left( \inf_{0\le s\le t} \{ \ Y_\ve(s)-E[Y_\ve(s)] \ \} \ge -\del \right) \ \ge \ p_1  \quad {\rm for \ } 0<\ve\le 1.
\ee
We conclude from (\ref{I4}) by choosing $\del$ appropriately that there is a positive constant $p_2$ depending only on $\La_0(0)$ such that if $0<\ve\le 1$ then
\be \label{J4}
P\left(Y_\ve(t)>0; \inf_{0\le s\le t}Y_\ve(s) >0 \  \bigg| \ Y_\ve(0)=y\right)  \ \ge \ p_2 \quad {\rm for \ } t=\La_0(0)/2, \ y\ge \La_0(0)/2 \ .
\ee 
It follows now from (\ref{G4}), (\ref{H4}), (\ref{J4}) that there is a positive constant $C_2$ depending only on the initial data $c_0(\cdot)$ such that $w_\ve(0,t)\ge 1/C_2$ for $0<\ve\le 1$ if $t=\La_0(0)/2$.  The upper bound in (\ref{C4}) for all $T$ then follows as in the previous paragraph. 

To prove that  $\La_\ve(\cdot)$ is continuous we first note that for any fixed $t>0$ the function $w_\ve(x,t), \ x\ge 0,$ is  continuous by virtue of the representation (\ref{H4}), the fact that  $\La_\ve(s)\ge \La_0(0)$ for $0\le s\le t$ and (\ref{K2}).  The continuity is uniform for $\ve$ in the interval $0<\ve\le 1$ since $c_0(\cdot)$ is a locally integrable  function. Next we observe from (\ref{I2}) that for $\De t>0$ there exists $x(\De t)$ independent of $\ve$ in the interval $0<\ve\le 1$  such that $\lim_{\De t\ra 0}x(\De t)=0$ and
\begin{multline} \label{K4}
P\left(Y_\ve(t+\De t)>0; \inf_{0\le s\le t+\De t}Y_\ve(s) >0 \  \bigg| \ Y_\ve(0)=y\right)  \ \ge \\
[1-\De t]P\left(Y_\ve(t)>x(\De t); \inf_{0\le s\le t}Y_\ve(s) >0 \  \bigg| \ Y_\ve(0)=y\right) \quad {\rm  for \ } y\ge 0, \ 0<\ve\le 1. 
\end{multline}
It follows from (\ref{H4}), (\ref{K4}) that $w_\ve(0,t+\De t)\ge [1-\De t]w_\ve(x(\De t),t)$ for $0<\ve\le 1$. Using the continuity of the function $w_\ve(x,t), \ x\ge 0,$ we conclude that $\lim_{\De t\ra 0}w_\ve(0,t+\De t)=w_\ve(0,t)$ and the limit is uniform for $0<\ve\le 1$. Hence the function $\La_\ve(\cdot)$ is continuous, and in fact the family of functions $\La_\ve(\cdot), \ 0<\ve\le 1,$ is equicontinuous.

To prove (\ref{D4}), (\ref{E4}) we first observe from the Ascoli-Arzela theorem that since the family of functions $\La_\ve(\cdot), \ 0<\ve\le 1,$ is equicontinuous, the limit (\ref{E4}) holds uniformly on the interval $0\le t\le T$ for a subsequence of $\ve\ra 0$. For such a sequence  it follows from (\ref{B7}), (\ref{I2}), (\ref{H4}) that (\ref{D4}) holds with $w_0(x,t)=w_0(F_{1/\La_0}(x,t),0)$ and the conservation law (\ref{B1}) continues to hold for $\ve=0$. Hence the limits on the RHS of (\ref{D4}), (\ref{E4}) are the solution to the CP model (\ref{A1}), (\ref{B1}) and are therefore unique. Consequently (\ref{D4}), (\ref{E4}) hold for all $\ve\ra0$. The uniformity of the limits follows by similar argument. 
\end{proof}
To show that the coarsening rate (\ref{J1}) for the diffusive model (\ref{G1}), (\ref{H1}) converges as $\ve\ra 0$ to the coarsening rate (\ref{D1}) for the CP model (\ref{A1}), (\ref{B1}), it will be sufficient to prove the following:
\begin{lem}
Let $c_\ve(x,t),  \La_\ve(t), \ 0<x,t<\infty,$ and  $c_0(x), \ 0<x<\infty,$  be as in Lemma 6.1 and satisfy (\ref{B4}). If $c_0(\cdot)$ is a continuous function then
\be \label{L4}
\lim_{\ve\ra0}\frac{\ve}{2}\frac{\pa c_\ve(0,T)}{\pa x} \ = \ c_0(0,T) \quad {\rm for \ any  \ }T>0.
\ee
\end{lem}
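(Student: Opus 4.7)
The plan is to work from the Dirichlet Green's function representation established in Section 4 and then use the semiclassical estimates of Section 5 to perform a boundary-layer analysis. From (\ref{M2}) with drift $b(y,s) = y/\La_\ve(s) - 1$, one has $c_\ve(x,T) = \int_0^\infty G_{\ve,D}(x,y,0,T)\, c_0(y)\, dy$. Since $c_\ve(0,T) = 0$, the Dirichlet condition gives
\[
\frac{\ve}{2}\frac{\pa c_\ve(0,T)}{\pa x} \ = \ \lim_{\la\ra 0^+}\frac{1}{2\la}\int_0^\infty G_{\ve,D}(\la\ve,y,0,T)\, c_0(y)\, dy.
\]
I would take $\ve\ra 0$ first for each fixed small $\la$, using Proposition 5.1 (to identify the pointwise limit of $G_{\ve,D}/G_\ve$) and Lemmas 5.2, 5.3 (to get uniform-in-$y$ control justifying dominated convergence), and then send $\la\ra 0$.

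Factoring $G_{\ve,D} = G_\ve R_\ve$, Proposition 5.1 gives $R_\ve(\la\ve,y,T)\ra 1 - e^{-2\la\mu_\ve(y,T)}$, where
\[
\mu_\ve(y,T) \ = \ 1 - \frac{m_{2,A_\ve}(T)}{\sig^2_{A_\ve}(T)} + \frac{m_{1,A_\ve}(T)\, y}{\sig^2_{A_\ve}(T)}, \quad A_\ve(s) = \frac{1}{\La_\ve(s)},
\]
and $(1-e^{-2\la\mu})/(2\la)\ra\mu$ as $\la\ra 0$. From the explicit Gaussian formula (\ref{K2}), $G_\ve(\la\ve,y,0,T) = (1+O(\la\sqrt{\ve}))G_\ve(0,y,0,T)$ uniformly in $y$. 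Combining these, together with the uniform upper and lower bounds from Lemmas 5.2 and 5.3, yields
\[
\frac{\ve}{2}\frac{\pa c_\ve(0,T)}{\pa x} \ = \ \int_0^\infty G_\ve(0,y,0,T)\, \mu_\ve(y,T)\, c_0(y)\, dy + o(1) \quad \text{as }\ve\ra 0.
\]

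To evaluate the limit of this integral I would use that $G_\ve(0,\cdot,0,T)$ is a Gaussian in $y$ centered at $y^*_\ve := m_{2,A_\ve}(T)/m_{1,A_\ve}(T)$ of width $O(\sqrt{\ve})$ and total mass $1/m_{1,A_\ve}(T)$. By Lemma 6.1, $\La_\ve\ra\La_0$ uniformly on $[0,T]$, so $y^*_\ve\ra y^*_0 = F_{1/\La_0}(0,T) > 0$ and the $m_{i,A_\ve}(T), \sig^2_{A_\ve}(T)$ converge to their $\ve=0$ counterparts. Since $c_0$ is continuous, Gaussian concentration gives
\[
\int_0^\infty G_\ve(0,y,0,T)\,\mu_\ve(y,T)\, c_0(y)\, dy \ \longrightarrow \ \frac{\mu_0(y^*_0,T)\, c_0(y^*_0)}{m_{1,1/\La_0}(T)}.
\]
A direct computation shows $\mu_0(y^*_0,T) = 1$ by the very definition of $y^*_0$. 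Finally, the method of characteristics applied to (\ref{A1}) yields $c_0(0,T) = c_0(F_{1/\La_0}(0,T),0)/m_{1,1/\La_0}(T) = c_0(y^*_0)/m_{1,1/\La_0}(T)$, which completes the identification.

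The main obstacle is establishing the error term $o(1)$ above uniformly in the double limit $\ve,\la\ra 0$. The integrand must be controlled across three regimes of $y$: the central window $|y-y^*_\ve|\lesssim\sqrt{\ve\log(1/\ve)}$, where Lemma 5.3 applies once $\gamma = c(T/\ve)^{1/8}(y/T)\ge 5$ eventually holds, giving a matching lower bound; the near-boundary region of small $y$, where Lemma 5.3's hypothesis fails but $G_\ve(0,y,0,T)$ is already super-polynomially small in $1/\ve$ (since $y^*_0>0$); and the tail of large $y$, where Lemma 5.2's upper bound combined with the Gaussian decay of $G_\ve(0,y,0,T)$ and the integrability $\int(1+y)c_0(y)\,dy<\infty$ of (\ref{B4}) gives a vanishing contribution. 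Careful orchestration of these estimates, particularly the interplay between the $\la\ra 0$ and $\ve\ra 0$ limits, is the technical heart of the argument.
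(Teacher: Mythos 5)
Your proposal follows the paper's proof closely: the identity (\ref{M4}), the Dirichlet Green's function representation (\ref{N4}), the ratio bounds of Section 5 (your ``Lemmas 5.2, 5.3'' are the paper's Lemma 5.1 and Lemma 5.2, an off-by-one labeling slip), the Gaussian concentration of $G_\ve(0,\cdot,0,T)$ at $y^*_0 = m_{2,1/\La_0}(T)/m_{1,1/\La_0}(T)$ with the cancellation $\mu_0(y^*_0,T)=1$, and the characteristics identification $c_0(0,T)=c_0(y^*_0)/m_{1,1/\La_0}(T)$ are all exactly as in the paper. The only deviation worth flagging is your stated order of limits (``take $\ve\to 0$ first for each fixed small $\la$''): since $\tfrac{\ve}{2}\pa_x c_\ve(0,T)$ is \emph{defined} as a $\la\to 0$ limit at fixed $\ve$, the paper uses the uniform-in-$\la$ form of Lemmas 5.1 and 5.2 to take $\la\to 0$ first, obtaining the $\ve$-dependent sandwiches (\ref{O4}) and (\ref{Q4}), and only then sends $\ve\to 0$; this avoids the iterated-limits swap that your ordering would oblige you to justify, and it also renders unnecessary your claim $G_\ve(\la\ve,y,0,T)=(1+O(\la\sqrt\ve))G_\ve(0,y,0,T)$ ``uniformly in $y$,'' which is not literally true for $y$ far from $y^*_\ve$ (though it is harmless on the range where the Gaussian has mass).
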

\begin{proof}
We use the identity
\be \label{M4}
\frac{\ve}{2}\frac{\pa c_\ve(0,T)}{\pa x} \ = \  \lim_{\la\ra 0} \frac{c_\ve(\la\ve,T)}{2\la}
\ee
and the representation  for $c_\ve(\la\ve,T)$ from (\ref{M2}),
\be \label{N4}
c_\ve(\la\ve,T) \ = \ \int_{0}^\infty G_{\ve,D}(\la\ve,y,0,T) c_0(y) \ dy, 
\ee
where $G_{\ve,D}$  is the Dirichlet Green's function corresponding to the  drift $b(y,t)=y/\La_\ve(t)-1$.  From (\ref{M4}), (\ref{N4}) and Lemma 5.1 we have that
\begin{multline} \label{O4}
\frac{\ve}{2}\frac{\pa c_\ve(0,T)}{\pa x} \ \le \\
 \int_{0}^\infty \left\{1-\frac{m_{2,1/\La_\ve}(T)}{\sig_{1/\La_\ve}^2(T)}+\frac{m_{1,1/\La_\ve}(T)y}{\sig_{1/\La_\ve}^2(T)}+C\Ga\left(\frac{\ve}{T},\frac{y}{T}\right) \left[1+\frac{y}{T}\right] \ \right\}G_{\ve}(0,y,0,T) c_0(y) \ dy, 
\end{multline} 
where the constant $C$ depends only on $T/\La_0(0)$.   We conclude from Lemma 6.1, (\ref{K2}) and  (\ref{O4})  that 
\be \label{P4}
\limsup_{\ve\ra 0} \frac{\ve}{2}\frac{\pa c_\ve(0,T)}{\pa x} \ \le \  \frac{1}{m_{1,1/\La_0}(T)} c_0\left(\frac{m_{2,1/\La_0}(T)}{m_{1,1/\La_0}(T)}\right) \ ,
\ee
provided the function $c_0(y), \ y>0,$ is continuous at $y=m_{2,1/\La_0}(T)/m_{1,1/\La_0}(T)$.

We can obtain a lower bound on the LHS of (\ref{L4})  by using Lemma 5.2. Thus we have that 
\begin{multline} \label{Q4}
\left[1+C(\ve/T)^{1/8}\right] \frac{\ve}{2}\frac{\pa c_\ve(0,T)}{\pa x} \ \ge \\
\int_{5(\ve/T)^{1/8}T/c}^\infty \left[1+\exp\left(-c^2(T/\ve)^{1/4}(y^2/4T^2\right) \ \right]^{-2} \ \times  \\
\left\{1-\frac{m_{2,1/\La_\ve}(T)}{\sig_{1/\La_\ve}^2(T)}+\frac{m_{1,1/\La_\ve}(T)y}{\sig_{1/\La_\ve}^2(T)} \ \right\}G_{\ve}(0,y,0,T) c_0(y) \ dy, 
\end{multline} 
where the constants $C,c>0$ depend only on $T/\La_0(0)$. We conclude from Lemma 6.1, (\ref{K2}) and  (\ref{Q4})  that 
\be \label{R4}
\liminf_{\ve\ra 0} \frac{\ve}{2}\frac{\pa c_\ve(0,T)}{\pa x} \ \ge \  \frac{1}{m_{1,1/\La_0}(T)} c_0\left(\frac{m_{2,1/\La_0}(T)}{m_{1,1/\La_0}(T)}\right) \ ,
\ee
provided the function $c_0(y), \ y>0,$ is continuous at $y=m_{2,1/\La_0}(T)/m_{1,1/\La_0}(T)$. Finally we observe that  the RHS of (\ref{P4}), (\ref{R4}) is the same as $c_0(0,T)$. This follows by differentiating the function $w(x,t)=w_0(F_{1/\La_0}(x,t),0)$ with respect to $x$ at $x=0$, and using the formula (\ref{B7}) for the function $F_{1/\La_0}$.
\end{proof}

\vspace{.1in}

\section{Upper Bound on the Coarsening Rate of diffusive CP Model}
In this section we prove Theorem 1.2. First we show that $\lim_{t\ra\infty}\langle X_t\rangle=\infty$. 
\begin{lem}
Let $c_\ve(x,t),  \La_\ve(t), \ 0<x,t<\infty,$ be the solution to (\ref{G1}), (\ref{H1}) with $\ve>0$ and non-negative initial data $c_0(x), \ 0<x<\infty,$ which is a locally integrable function satisfying (\ref{B4}). Then $\lim_{t\ra\infty}\La_\ve(t)=\infty$. 
\end{lem}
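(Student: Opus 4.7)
The plan is to assume the non--decreasing function $\La_\ve(\cdot)$ converges to a finite limit $L_\infty:=\lim_{t\ra\infty}\La_\ve(t)<\infty$ and to derive a contradiction with the conservation law (\ref{H1}). The argument couples the probabilistic representation (\ref{H4}) after a positive start--up time to a pathwise SDE comparison against the time--homogeneous reference diffusion corresponding to $\La\equiv L_\infty$, whose trajectories starting above $L_\infty$ escape exponentially.

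First I would fix any $T_0>0$ and reinitialise at $c_\ve(\cdot,T_0)$. Because the Dirichlet Green's function $G_{\ve,D}(y,\eta,0,T_0)$ is strictly positive on $(0,\infty)\times(0,\infty)$, the new initial data $y\mapsto c_\ve(y,T_0)$ is strictly positive on $(0,\infty)$, so $m_K:=\int_{KL_\infty}^{(K+1)L_\infty}c_\ve(y,T_0)\,dy>0$ for every $K\ge 1$. Next, let $Y_\ve(\cdot)$ and $\hat Y_\ve(\cdot)$ solve
\begin{equation*}
dY_\ve=\left[\frac{Y_\ve}{\La_\ve(T_0+s)}-1\right]ds+\sqrt{\ve}\,dB,\qquad d\hat Y_\ve=\left(\frac{\hat Y_\ve}{L_\infty}-1\right)ds+\sqrt{\ve}\,dB,
\end{equation*}
driven by the same Brownian motion $B$ with common initial condition $y>0$. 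Since $\La_\ve(T_0+s)\le L_\infty$, the difference $D(s):=Y_\ve(s)-\hat Y_\ve(s)$ is pathwise non--stochastic and satisfies the linear ODE $D'(s)=D(s)/\La_\ve(T_0+s)+\hat Y_\ve(s)[1/\La_\ve(T_0+s)-1/L_\infty]$ with $D(0)=0$ and non--negative source whenever $\hat Y_\ve(s)\ge 0$. Gronwall then yields $Y_\ve(s)\ge\hat Y_\ve(s)$ up to the first hitting time $\tau$ of $0$ by $\hat Y_\ve$, so $\{\tau>t\}\subseteq\{\inf_{[0,t]}Y_\ve>0\}$ and $Y_\ve(t)\ge\hat Y_\ve(t)$ on this event.

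The reference diffusion is affine in $B$ and admits the explicit form $\hat Y_\ve(s)=L_\infty+e^{s/L_\infty}\bigl[(y-L_\infty)+\sqrt{\ve}\,W(s)\bigr]$ with $W(s)=\int_0^s e^{-s'/L_\infty}\,dB(s')$ an $L^2$--bounded martingale of variance at most $L_\infty/2$. Doob's maximal inequality lets me choose $K=K(\ve,L_\infty)$ large enough that $\mathcal{E}:=\{\sup_{s\ge 0}|W(s)|\le (K-1)L_\infty/(2\sqrt{\ve})\}$ has probability at least $1/2$; on $\mathcal{E}$ every trajectory with $\hat Y_\ve(0)=y\in[KL_\infty,(K+1)L_\infty]$ satisfies $\hat Y_\ve(s)\ge\tfrac12(K-1)L_\infty\,e^{s/L_\infty}$ for all $s\ge 0$, so $\tau=\infty$ on $\mathcal{E}$ and
\begin{equation*}
E\bigl[\,\hat Y_\ve(t)\,\mathbf{1}_{\tau>t}\,\big|\,\hat Y_\ve(0)=y\,\bigr]\ \ge\ \tfrac14(K-1)L_\infty\,e^{t/L_\infty}
\end{equation*}
uniformly in $y\in[KL_\infty,(K+1)L_\infty]$.

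Taking first moments in the version of (\ref{H4}) restarted at $T_0$, combining with the comparison, and restricting the $y$--integration to $[KL_\infty,(K+1)L_\infty]$ finally yields
\begin{equation*}
1\ =\ \int_0^\infty x\,c_\ve(x,T_0+t)\,dx\ \ge\ \tfrac14(K-1)L_\infty\,m_K\,e^{t/L_\infty}\ \longrightarrow\ \infty,
\end{equation*}
the desired contradiction, forcing $L_\infty=\infty$. The main technical point I expect to require care is justifying the pathwise coupling up to the random time $\tau$ and verifying that the event $\{\inf_{[0,t]}Y_\ve>0\}$ obtained after the comparison is exactly the one encoded by the Dirichlet Green's function in (\ref{H4}); once that bookkeeping is in place, the strict monotone escape of $\hat Y_\ve$ on $\mathcal{E}$ makes the quantitative estimate immediate.
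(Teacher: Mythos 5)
Your argument is correct, but it takes a genuinely different route from the one in the paper. The paper's proof is a direct first--moment computation: differentiating $\int_0^\infty x c_\ve(x,t)\,dx$ and using (\ref{G1}) gives (\ref{P5}); assuming $\La_\ve(t)\le\La_\infty$, the conservation law (\ref{H1}) reduces this to $\frac{d}{dt}\int x c_\ve\,dx\ge\frac{1}{2\La_\infty}-\int_0^{2\La_\infty}c_\ve(x,t)\,dx$, and the simple pointwise bound $G_\ve(x,y,0,t)\le(2\pi\ve\sig_{1/\La_\ve}^2(t))^{-1/2}$ with $\sig_{1/\La_\ve}^2(t)\ge t$ shows the subtracted integral vanishes as $t\ra\infty$, so the first moment must blow up. In probabilistic terms, the paper watches the density near the origin spread out and vanish; you instead track the mass that starts far from the origin and escapes. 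Your pathwise coupling against the time--homogeneous reference $\hat Y_\ve$ with drift $\hat Y/L_\infty-1$ is sound: the difference $D=Y_\ve-\hat Y_\ve$ has no stochastic differential, its source is non--negative while $\hat Y_\ve\ge 0$, and Gronwall gives $Y_\ve\ge\hat Y_\ve$ up to $\tau$; the explicit representation $\hat Y_\ve(s)=L_\infty+e^{s/L_\infty}[(y-L_\infty)+\sqrt\ve\,W(s)]$ with $W$ an $L^2$--bounded martingale and Doob's inequality then gives the exponential escape on an event of probability $\ge 1/2$. The restart at $T_0>0$ to guarantee $m_K>0$ is a necessary step you correctly identified, since the lemma's hypotheses do not provide support of $c_0$ near the $K$ you need. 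The trade--off: the paper's proof is shorter, uses only the explicit Gaussian bound, and needs no positivity of the density; yours is heavier machinery but gives a sharper conclusion, namely that under the (false) hypothesis the first moment would grow at least exponentially, whereas the paper's argument only extracts linear growth.
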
 
\begin{proof}
We have already noted that  $\La_\ve(t)$ is an increasing function of $t$. It will therefore be sufficient to show that if  for some finite $\La_\infty$ we have $\La_\ve(t)\le \La_\infty$ for all $t\ge 0$ then there is a contradiction. To see this we use the identity
\be \label{P5}
\frac{d}{dt}\int_0^\infty xc_\ve(x,t) \ dx \ = \  \frac{1}{\La_\ve(t)}\int_0^\infty xc_\ve(x,t) \ dx -\int_0^\infty c_\ve(x,t) \ dx \ ,
\ee
which follows  from (\ref{G1}). Using the conservation law (\ref{H1}) and (\ref{P5}) we see that
\begin{multline} \label{Q5}
\frac{d}{dt}\int_0^\infty xc_\ve(x,t) \ dx \ \ge \  \frac{1}{2\La_\infty}\int_0^\infty xc_\ve(x,t) \ dx -\int_0^{2\La_\infty} c_\ve(x,t) \ dx \\
= \ \frac{1}{2\La_\infty} -\int_0^{2\La_\infty} c_\ve(x,t) \ dx  \ \ge          \ 
 \frac{1}{2\La_\infty} -\int_0^{2\La_\infty} dx\int_0^\infty dy \ G_\ve(x,y,0,t)c_0(y) \  ,
\end{multline}
where $G_\ve$ is the function (\ref{K2}) with $A(s)=1/\La_\ve(s), \ s\ge 0$.  Hence we conclude that
\be \label{R5}
\frac{d}{dt}\int_0^\infty xc_\ve(x,t) \ dx \ \ge \  \frac{1}{2\La_\infty}- \frac{2\La_\infty }{\La_\ve(0)\sqrt{2\pi\ve\sig_{1/\La_\ve}^2(t)}} \ .
\ee 
From (\ref{J2}) we see that $\sig_{1/\La_\ve}^2(t)\ge t$ and hence (\ref{R5}) implies that
\be \label{S5}
\lim_{t\ra\infty}\int_0^\infty xc_\ve(x,t) \ dx  \ = \ \infty \ ,
\ee
but this is a contradiction to the conservation law (\ref{H1}). 
\end{proof}
We begin the proof of the inequality (\ref{M1}):
\begin{lem}
Suppose $c_0:[0,\infty)\ra\R^+$ satisfies (\ref{B4}) and $c_\ve(x,t), \ x\ge0, t>0$ is the solution to (\ref{G1}), (\ref{H1})  with initial data $c_0(\cdot)$ and  Dirichlet boundary condition $c_\ve(0,t)=0, \ t>0$. Assume that $\La_\ve(0)=1$ and that the function $h_\ve(x,t)$ defined by (\ref{AX7}) is log-concave in $x$ at $t=0$. Then there exist positive universal constants $C,\ve_0$ with $0<\ve_0\le 1$ such that
\be \label{T5}
c_\ve(\la\ve,1) \ \le \ C\la c_\ve(\ve,1) \quad {\rm for \ } 0<\ve\le \ve_0,\  0< \la\le 1. 
\ee
\end{lem}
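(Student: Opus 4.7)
The plan is to combine the Dirichlet Green's function representation $c_\ve(x,1) = \int_0^\infty G_{\ve,D}(x,y,0,1)\,c_0(y)\,dy$ from (\ref{M2}), with drift $A(s)=1/\La_\ve(s)$, with the sharp two-sided ratio estimates on $G_{\ve,D}/G_\ve$ from Lemmas~5.1 and~5.2. Set $m_1=m_{1,A}(1)$, $m_2=m_{2,A}(1)$, $\sig^2=\sig_A^2(1)$, $\mu(y)=1-m_2/\sig^2+m_1 y/\sig^2$, and $Y^*=C\ve^{1/8}$. The underlying heuristic is that $c_\ve$ develops a universal boundary-layer profile of the form $A(1)(1-e^{-2x/\ve})$ on the scale $x=O(\ve)$, so that the pointwise ratio satisfies $c_\ve(\la\ve,1)/c_\ve(\ve,1)\to(1-e^{-2\la})/(1-e^{-2})\le 2\la/(1-e^{-2})$ as $\ve\to 0$; the proof just makes this rigorous with the error control furnished by Section~5.

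As a preliminary I would use the log-concavity hypothesis (equivalent to $c_0 h_0\le w_0^2$ pointwise) together with the normalization $\La_\ve(0)=1$ (which gives $w_\ve(0,0)=h_\ve(0,0)=1$) to establish universal two-sided bounds $1\le\La_\ve(s)\le C$ on $[0,1]$. The lower bound is immediate from monotonicity of $\La_\ve$, and the upper bound follows because log-concave densities on $[0,\infty)$ with unit mean are quantitatively non-degenerate, so the arguments of Lemma~6.1 can be made universal rather than $c_0$-dependent. These bounds translate to universal control on $m_1,m_2,\sig^2$ and, crucially, $\mu_0\le\mu(y)\le C(1+y)$ with $\mu_0>0$ uniform (the strict lower bound uses $A(s)\ge 1/C>0$ which forces strict Jensen $\sig^2>m_2$).

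Next I would apply Lemma~5.1 to the numerator and Lemma~5.2 (with $\la=1$) to the denominator. The elementary inequality $(1-e^{-2\la\mu(y)})/(1-e^{-2\mu(y)})\le C\la(1+y)$ for $\mu\ge\mu_0$ (valid since $1-e^{-2\mu}\ge 1-e^{-2\mu_0}>0$ and $1-e^{-2\la\mu}\le 2\la\mu\le C\la(1+y)$), combined with the uniform bound
\[
\frac{G_\ve(\la\ve,y,0,1)}{G_\ve(\ve,y,0,1)} \ = \ \exp\!\left[\frac{(1-\la)\{(1+\la)\ve-2(m_1y-m_2)\}}{2\sig^2}\right] \ \le \ C
\]
by direct computation, yields the pointwise estimate $(1-e^{-2\la\mu(y)})G_\ve(\la\ve,y,0,1)\le C\la(1+y)(1-e^{-2\mu(y)})G_\ve(\ve,y,0,1)$. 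Integrating against $c_0$ and absorbing the $\Ga$-error term from Lemma~5.1 into the leading term, the claim reduces to showing
\[
\int_0^\infty(1+y)(1-e^{-2\mu(y)})\,G_\ve(\ve,y,0,1)\,c_0(y)\,dy \ \le \ C\int_{Y^*}^\infty(1-e^{-2\mu(y)})\,G_\ve(\ve,y,0,1)\,c_0(y)\,dy
\]
with $C$ universal. This follows because $G_\ve(\ve,\cdot,0,1)$ concentrates $y$ near $(\ve+m_2)/m_1=O(1)$ on a window of width $O(\sqrt{\ve})$, where $1+y=O(1)$, while the contribution from $y<Y^*=O(\ve^{1/8})$ is exponentially small.

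The main technical obstacle is enforcing the universality of constants throughout the argument rather than constants depending on $c_0$. This is most delicate in the preliminary step: one must extract the universal upper bound on $\La_\ve(1)$ from the log-concavity hypothesis alone (so that no specific features of $c_0$ beyond $\int c_0=1=\int yc_0$ and $c_0 h_0\le w_0^2$ are used), which amounts to a structural rigidity statement for the class of log-concave probability densities with unit mean. A secondary difficulty is tracking the small-$\ve$ error terms in Lemmas~5.1 and~5.2 carefully enough that they remain genuinely subleading as $\ve\to 0$, so neither the numerator bound nor the denominator lower bound degenerates.
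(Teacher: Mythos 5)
Your overall strategy---Lemma~5.1 to control the numerator $c_\ve(\la\ve,1)$ and Lemma~5.2 to control the denominator $c_\ve(\ve,1)$ from below, then compare the resulting $G_\ve$-integrals---is the same as the paper's. However your proposal both adds an unnecessary detour and leaves a real gap. The detour: the universal upper bound on $\La_\ve(1)$ that you flag as ``the main technical obstacle'' is not actually needed. From $\La_\ve(0)=1$ and monotonicity alone one has $1/\La_\ve\in(0,1]$, which gives $m_1\in[1,e]$, $m_2\in[1,e-1]$, $\sig^2\in[1,(e^2-1)/2]$ and $F_{1/\La_\ve}(\la\ve,1)\in[1-1/e,1+\la\ve]$---everything you need for the Gaussian estimates. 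The reason you think you need $\La_\ve\le C$ is to get a strict Jensen gap $\sig^2>m_2$ and hence $\mu(y)\ge\mu_0>0$ uniformly; but the paper instead restricts the integration to $y\ge\nu(\del)$ for a fixed $\nu(\del)>0$, where $\mu(y)\ge m_1\nu(\del)/\sig^2>0$ holds regardless of the Jensen gap. (Also note that your plan to bound $\La_\ve(1)$ by ``making Lemma~6.1 universal'' is probably workable but flirts with circularity: the natural route to bounding $d\La_\ve(t)/dt$ is the present lemma.)

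The genuine gap is the last display, which you justify only by saying the Gaussian concentrates near $O(1)$. That is not enough: the inequality $\int_0^\infty(1+y)(\cdots)c_0\,dy\le C\int_{Y^*}^\infty(\cdots)c_0\,dy$ requires that a \emph{universal} fraction of the $X_0$-mass actually lives where the Gaussian is appreciable and where $1+y=O(1)$; without this, the left side can be dominated by mass lying far from the Gaussian center or below $Y^*$, and the comparison fails with $C$ depending on $c_0$. The paper supplies exactly this missing ingredient through the Chebyshev-type non-degeneracy bound \eqref{U5}, derived from the log-concavity hypothesis $\beta_{X_0}\le1$, $\langle X_0\rangle=1$, and identity (29) of \cite{c2}: there exist universal $\del,\nu(\del)>0$ with $P(X_0\in[\nu(\del),1/(1-\del)])\ge\del/2$. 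The paper then arranges $\del,\ve_0$ so that the Gaussian center $F_{1/\La_\ve}(\la\ve,1)$ lies inside this window (and closer to its right endpoint), and restricts the integration to $[\nu(\del),1/(1-\del)]$; this is also what makes the $\Ga(\ve/T,y/T)$ error of Lemma~5.1 genuinely subleading (since $\mu(y)\ge c>0$ on the window, while the error is $O(\ve^{1/8})$). You need to incorporate a quantitative statement of this type, tied to log-concavity, to close the argument.
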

\begin{proof}
Let $X_0$ be the positive random variable with pdf $c_0(x)/\int_0^\infty c_0(x') \ dx', \ x>0$.  Then from (\ref{I1})  and the assumption $\La_\ve(0)=1$ we see that $\langle X_0\rangle=1$. Since the beta function (\ref{AI7}) for $X_0$ is also bounded by $1$ it follows from the Chebshev inequality and the identity (29) of \cite{c2} that for $\del$ with $0<\del<1$, there exists a constant $\nu(\del)>0$ depending only on $\del$  such that
\be \label{U5}
P(X_0<\nu(\del))+P(X_0>1/[1-\del]) \  \le 1-\del/2  .
\ee
Now recall that the Green's function (\ref{K2})  has the form (\ref{V2}) where the function $y\ra q(x,y,0,t)$ takes its minimum at  $y=F_A(x,t)$ where $F_A$ is defined by (\ref{B7}). In the case of $A(\cdot)=1/\La_\ve(\cdot), \ x=\la\ve$ and $t=1$, we see that  $ 1-(1-\la\ve)/e \le F_{1/\La_\ve}(\la\ve,1)\le 1+ \la\ve$. This follows from the fact that the function $\La_\ve(\cdot)$ is increasing and $\La_\ve(0)=1$. We choose now $\del,\ve_0>0$ sufficiently small so that $F_{1/\La_\ve}(\la\ve,1)-\nu(\del)>1/(1-\del)-F_{1/\La_\ve}(\la\ve,1)>0$ for $ 0<\la\le 1, \ \ve\le \ve_0$. 
It follows then from Lemma 5.1 that
\be \label{V5}
c_\ve(\la\ve,1)  \ \le \ C_1\la \int_{\nu(\del)}^{1/[1-\del]} G_\ve(\ve,y,0,1) c_0(y) \ dy \qquad {\rm for \ } 0<\ve\le\ve_0, \ 0<\la\le 1,
\ee
where $C_1>0$ depends only on $\del,\ve_0$.   Next we apply Lemma 5.2 to conclude that
\be \label{W5}
\int_{\nu(\del)}^{1/[1-\del]} G_\ve(\ve,y,0,1) c_0(y) \ dy  \ \le \ C_2\int_{\nu(\del)}^{1/[1-\del]} G_{\ve,D}(\ve,y,0,1) c_0(y) \ dy  \ \le \ C_2 c_\ve(\ve,1) \ ,
\ee
for some constant $C_2>0$ depending only on $\del,\ve_0$. Actually a strict application of Lemma 5.2 requires us to impose an additional restriction on $\ve_0>0$ so that the condition $\ga\ge 5$ of Lemma 5.2 holds.  Now (\ref{T5}) follows from (\ref{V5}), (\ref{W5}). 
\end{proof}
\begin{lem}
Suppose  the initial data $c_0(\cdot)$ for (\ref{G1}), (\ref{H1}) satisfies the conditions of Lemma 7.2, the function $h_\ve(x,t)$ is log-concave in $x$ for all $t\ge 0$, and $0<\ve\le\ve_0$. Then there is a universal constant $C$ such that $d\La_\ve(t)/dt\le C$ for $t\ge 1$. 
\end{lem}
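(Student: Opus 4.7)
The plan is to reduce the bound at arbitrary $t\ge 1$ to the pointwise estimate of Lemma 7.2 applied to a space--time dilated problem, and then to convert that estimate into a rate bound via the log-concavity hypothesis. Since $\La_\ve(\cdot)$ is continuous and non-decreasing with $\La_\ve(0)=1$ (hypothesis of Lemma 7.2) and $\La_\ve(t)\to\infty$ (Lemma 7.1), the map $t_0\mapsto t_0+\La_\ve(t_0)$ is continuous, takes the value $1$ at $t_0=0$, and grows without bound. Hence, for every $t\ge 1$ one can choose by the intermediate value theorem some $t_0\in[0,t-1]$ with $L_0:=\La_\ve(t_0)=t-t_0\ge 1$.

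I would then introduce the dilation
\[
\tilde c(\tilde x,\tilde s)\;:=\;L_0^{2}\,c_\ve(L_0\tilde x,\,t_0+L_0\tilde s),
\]
and verify by direct substitution that $\tilde c$ solves (\ref{G1}), (\ref{H1}) with diffusion coefficient $\tilde\ve:=\ve/L_0\le\ve_0$, Dirichlet condition $\tilde c(0,\tilde s)=0$, rescaled mean $\tilde\La(\tilde s)=\La_\ve(t_0+L_0\tilde s)/L_0$ (so $\tilde\La(0)=1$), and $\tilde h(\tilde x,\tilde s)=h_\ve(L_0\tilde x,t_0+L_0\tilde s)$. The latter is log-concave in $\tilde x$ for every $\tilde s\ge 0$ by the global-in-time log-concavity hypothesis on $h_\ve$. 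A chain-rule computation gives $d\tilde\La(\tilde s)/d\tilde s=d\La_\ve(t_0+L_0\tilde s)/dt$; in particular $d\tilde\La(1)/d\tilde s=d\La_\ve(t)/dt$, so it suffices to bound the coarsening rate of the rescaled problem at $\tilde s=1$.

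Applying Lemma 7.2 to the rescaled problem yields $\tilde c(\tilde\la\tilde\ve,1)\le C\tilde\la\,\tilde c(\tilde\ve,1)$ for $0<\tilde\la\le 1$. Dividing by $2\tilde\la$, sending $\tilde\la\to 0^{+}$, and using $\tilde c(0,1)=0$ produce the pointwise derivative bound $(\tilde\ve/2)\,\pa\tilde c(0,1)/\pa\tilde x\le (C/2)\,\tilde c(\tilde\ve,1)$. The rescaled form of (\ref{J1}), together with $\int_0^\infty\tilde c(\tilde x,1)\,d\tilde x=1/\tilde\La(1)$, then gives
\[
\frac{d\La_\ve(t)}{dt}\;=\;\frac{d\tilde\La(1)}{d\tilde s}\;\le\;\frac{C}{2}\,\tilde c(\tilde\ve,1)\,\tilde\La(1)^{2}.
\]

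To close, I would use that log-concavity of $\tilde h(\cdot,1)$ is equivalent via (\ref{AI7}) to $\beta\le 1$, i.e.\ $\tilde c(\tilde\ve,1)\,\tilde h(\tilde\ve,1)\le \tilde w(\tilde\ve,1)^{2}$. Monotonicity of $\tilde w$ gives $\tilde w(\tilde\ve,1)\le\tilde w(0,1)=1/\tilde\La(1)$; and since $\tilde h(0,1)=1$ and $|\pa\tilde h/\pa\tilde x|=\tilde w\le 1/\tilde\La(1)\le 1$, a first-order expansion yields $\tilde h(\tilde\ve,1)\ge 1-\tilde\ve\ge 1/2$ after further shrinking $\ve_0$ to be at most $1/2$. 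Combining, $\tilde c(\tilde\ve,1)\tilde\La(1)^{2}\le 2$, whence $d\La_\ve(t)/dt\le C$ for all $t\ge 1$ with $C$ universal. The main obstacle is the bookkeeping of the first two paragraphs: verifying that every hypothesis of Lemma 7.2 (most crucially the log-concavity of $\tilde h(\cdot,1)$, which forces the full global-in-time assumption) and the identity of coarsening rates really transfer under the dilation. Once that is in hand, Lemma 7.2 and the elementary $\beta\le 1$ estimate combine to yield a universal constant.
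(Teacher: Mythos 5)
Your proof is correct and follows essentially the same strategy as the paper: rescale space and time by $L_0=\La_\ve(t_0)$ to reduce to the time-$1$ estimate, invoke Lemma 7.2 on the rescaled problem (with $\tilde\ve=\ve/L_0\le\ve_0$), and convert the pointwise bound into a coarsening-rate bound via the $\beta\le 1$ identity together with the elementary lower bound on $\tilde h(\tilde\ve,1)$. The paper organizes the two steps in the opposite order (first proving the bound at $t=1$, then introducing the change of variable $\tau(\la)$ with $\La_\ve(\la\tau(\la))=\la$ to propagate it to all $t\ge 1$), and it lower-bounds $h_\ve(\ve,1)$ via the expectation identity (\ref{Y5}) rather than your derivative estimate, but these are cosmetic differences.
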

\begin{proof}
From (\ref{H1}), (\ref{J1}), (\ref{AI7}),  (\ref{AX7}) and Lemma 7.2 we see there is a universal constant $C_1$ such that
\be \label{X5}
\frac{d\La_\ve(1)}{dt} \ \le \  \frac{C_1c_\ve(\ve,1)h_\ve(0,1)}{w_\ve(0,1)^2} \ \le \ \frac{C_1\beta_{X_1}(\ve)h_\ve(0,1)}{h_\ve(\ve,1)} \ ,
\ee
where $X_t$ is the random variable with pdf proportional to $c_\ve(\cdot,t)$.
We can bound $h_\ve(\ve,1)$ below by a constant times $h_\ve(0,1)$. To see this consider a positive random variable $X$ and observe that
\begin{multline} \label{Y5}
E[ \ X-\langle X\rangle/2; \ X>\langle X\rangle/2 \ ] \ \ge \   E[ \ X-\langle X\rangle/2; \ X>3\langle X\rangle/4 \ ] \ \ge \\ \frac{1}{3} E[ \ X; \ X>3\langle X\rangle/4 \ ]  \ \ge  \ \frac{1}{12} \langle X\rangle \ .
\end{multline}  
Since the function $\La_\ve(\cdot)$ is increasing, it follows that $\langle X_1\rangle \ge 1$.  
We conclude  then from (\ref{Y5}) that
\be \label{Z5}
h_\ve(\ve,1) \ = \ \int_\ve^\infty (x-\ve) c_\ve(x,1) \ dx \ \ge \ \frac{1}{12} \int_0^\infty x c_\ve(x,1) \ dx \ = \ \frac{1}{12}h_\ve(0,1) \ ,
\ee
provided $\ve<1/2$. 
Since the log-concavity of $h_\ve(\cdot,1)$ implies that $\beta_{X_1}(\ve)\le 1$, we obtain from (\ref{X5}), (\ref{Z5}) an upper bound on $d\La_\ve(t)/dt$ when $t=1$. 

The upper bound for $t>1$ now follows from the scaling property of (\ref{G1}), (\ref{H1}) mentioned in the discussion following the statement of Theorem 1.2.  To see this we define a function $\tau(\la), \ \la\ge 1, $ as the solution to the equation $\La_\ve(\la \tau(\la))=\la$.  Observe from (\ref{J1}) and the Hopf maximum principle \cite{pw} that the function $\La_\ve(\cdot)$ is strictly increasing.  Hence $\tau(\la)$ is uniquely determined. Furthermore the function $\tau(\cdot)$ is continuous. Rescaling (\ref{G1}), (\ref{H1}) by $\la$, we conclude from the result of the previous paragraph  that
\be \label{AA5}
\frac{d}{dt} \ \La_\ve(\la[\tau(\la)+t]) \ \le C\la \quad {\rm at \ } t=1. 
\ee
We have shown then that $d\La_\ve(t)/dt\le C$ at $t=\la[\tau(\la)+1]$.  Since the function $\la\ra\la\tau(\la)$ is monotonically increasing with range $[0,\infty)$ the result follows. 
\end{proof}
To complete the proof of the inequality (\ref{M1}) we first observe that by Lemma 7.1 there exists $T_\ve\ge 0$ such that $\ve/\La_\ve(T_\ve)\le \ve_0$, where $\ve_0$ is the universal constant of Lemma 7.2. We now rescale (\ref{G1}), (\ref{H1}) with $\la=\La_\ve(T_\ve)$, which puts us into the situation of Lemma 7.3. The result follows on taking $T=T_\ve+\La_\ve(T_\ve)$, provided we have the log-concavity property of the function $h_\ve$ in the statement of Lemma 7.3. The assumption of Theorem 1.2, that the function $x\ra E[X_0-x \ | \ X_0>x], \ 0\le x<\|X_0\|_\infty$, decreases, is equivalent to the assumption that the function $h_\ve(\cdot,0)$ is log-concave. Hence it remains to be shown that if $h_\ve(\cdot,0)$ is log-concave, then $h_\ve(\cdot,t)$ is also log-concave for all $t>0$. 

If we make the approximation (\ref{AZ7}) for $h_\ve$, the log-concavity of $h_\ve(\cdot,t)$ follows from the Pr\'{e}kopa-Leindler theorem (Theorem 6.4 of \cite{villani}).  In our situation we  follow the approach of Korevaar \cite{kor} and differentiate the PDE (\ref{BN7}) which $h_\ve$ satisfies.  Thus $v_\ve(x,t)=-\frac{\pa}{\pa x} \log h_\ve(x,t)$ is a solution of the PDE (\ref{BP7}), whence $u_\ve(x,t)=\pa v_\ve(x,t)/\pa x$ is a solution to the PDE
\be \label{M5}
\frac{\pa u_\ve(x,t)}{\pa t}+\left[\frac{x}{\La_\ve(t)}-1+\ve v_\ve(x,t)\right]\frac{\pa u_\ve(x,t)}{\pa x}
 +\frac{2u_\ve(x,t)}{\La_\ve(t)} +\ve u_\ve(x,t)^2\ = \  \frac{\ve}{2} \frac{\pa^2 u_\ve(x,t)}{\pa x^2} \ .
\ee
Observe now that
\be \label{N5}
u_\ve(x,t) \  = \ v_\ve(x,t)^2[1-c_\ve(x,t)h_\ve(x,t)/w_\ve(x,t)^2] \ . 
\ee
Since $\lim_{x\ra 0} c_\ve(x,t)=0$ for $t>0$  it follows from (\ref{N5}) that $ \liminf_{x\ra 0} u_\ve(x,t)\ge 0$ for $t>0$. If $h_\ve(\cdot,0)$ is log concave  then the initial data $u_\ve(x,0), \ x>0,$ for (\ref{M5}) is also non-negative. We expect then from the maximum principle that $u_\ve(x,t)$ is non-negative for all $x,t>0$, and hence $h_\ve(\cdot,t)$ is a log concave function for all $t>0$.  

Although Korevaar's argument is simple in concept, the rigorous implementation requires that certain technical difficulties be overcome.  Our first step towards rigorous implementation is to approximate an arbitrary non-negative random variable $X$ satisfying $\langle X\rangle<\infty$, and having log-concave function $h_X(\cdot)$ as defined by (\ref{AG7}), by random variables with some regularity. The approximating random variables $Y$ have the properties:
\be \label{O5}
Y \ {\rm is \ nonnegative  \ with \ continuous \ pdf \ } c_Y(y), \ y\ge 0, \quad {\rm and \ } c_Y(0)=0.
\ee
$$
{\rm There \ exists \ } K,a,L,y_0>0 \ {\rm and \ } c_Y(y)=K\exp\left[-a(y-y_0)-\{a(y-y_0)\}^2/2L\right] \ {\rm for \ } y\ge y_0.
$$
$$
{\rm The \  beta \  function \  (\ref{AI7}) \ of \ } Y \ {\rm satisfies \ } \beta_Y(y)<1 \quad {\rm for  \ all \ } y\ge 0. 
$$
We assume that the function $\La_\ve:[0,\infty)\ra\R^+$  of (\ref{G1}) is positive and continuous, and consider solutions $c_\ve$ to (\ref{G1}) with Dirichlet boundary condition $c_\ve(0,t)=0, \ t>0$, and initial condition given by the integrable pdf $c_{X_0}(\cdot)$ of a non-negative random variable  $X_0$  satisfying $\langle X_0\rangle<\infty$. We denote by $X_t$ the random variable with pdf $c_\ve(\cdot,t), \ t>0$. 
\begin{lem}
Assume that the function $h_{X_0}(\cdot)$ for the initial data random variable $X_0$ of (\ref{G1}) is log-concave. Then there is a sequence of random variables $Y_0^k, \ k=1,2,..,$ satisfying (\ref{O5}) such that the functions $(x,t)\ra h_{Y^k_t}(x), \ k=1,2,..,$ converge  as $k\ra\infty$, uniformly in any rectangle $\{(x,t) \ : \ 0\le x\le x_0, \ 0\le t\le T\}$, to the function $(x,t)\ra h_{X_t}(x)$. 
\end{lem}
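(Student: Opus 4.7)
The plan is to split the proof into two logically independent parts: first, construct approximating variables $Y_0^k$ satisfying (\ref{O5}) with $h_{Y_0^k}(x)\to h_{X_0}(x)$ locally uniformly on $[0,\infty)$; then, show that the solution map for (\ref{G1})--(\ref{H1}) is continuous in this topology, which will transfer the convergence to $h_{Y_t^k}(x)\to h_{X_t}(x)$ uniformly on the prescribed rectangles.

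For the construction, I would proceed in three stages. First, convolve $c_{X_0}$ with a narrow Gaussian of variance $1/k^2$; by Pr\'ekopa--Leindler (Theorem 6.4 of \cite{villani}) this preserves log-concavity of $h$ and in fact makes it strictly log-concave. Second, choose $y_0^k\to\infty$ at a point where the smoothed $-\log h_{X_0}$ is already close to affine, and splice in the Gaussian pdf $K\exp[-a(y-y_0^k)-\{a(y-y_0^k)\}^2/(2L)]$ for $y\ge y_0^k$, with $K,a,L$ chosen to match the pdf and its first derivative continuously at $y_0^k$. Since the spliced tail is itself a strictly log-concave pdf, the reassembled $h$ remains strictly log-concave (by another Pr\'ekopa--Leindler application via a two-dimensional lifting). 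Third, multiply by a smooth factor vanishing at the origin, for example $y/(y+1/k)$, and renormalize to satisfy (\ref{H1}). The resulting $Y_0^k$ meets all three requirements of (\ref{O5}), and $h_{Y_0^k}\to h_{X_0}$ locally uniformly by dominated convergence.

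For the stability step, I would mimic the proof of Lemma 6.1 with $\ve\to 0$ replaced by $k\to\infty$. The representation (\ref{M2}) expresses $w_{Y_t^k}$ as an integral of $c_{Y_0^k}$ against the Dirichlet Green's function $G_{\ve,D}^{(k)}$ corresponding to the drift $b(y,s)=y/\La^{(k)}(s)-1$, where $\La^{(k)}$ denotes the $\La_\ve$ function for the $Y_t^k$ evolution. The family $\{\La^{(k)}\}_{k\ge 1}$ will be uniformly bounded on $[0,T]$ (by mimicking (\ref{G4})--(\ref{J4}), using $\langle Y_0^k\rangle\to\langle X_0\rangle$ and tail bounds transferred from $c_{X_0}$) and equicontinuous (by the stochastic argument around (\ref{K4})). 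Ascoli--Arzel\`a then extracts a subsequence $\La^{(k)}\to\La^*$ uniformly on $[0,T]$. Passing to the limit in the Green's function representation, using continuity of (\ref{K2}) in the drift together with the Dirichlet estimates of \S 5, shows that $\La^*$ solves (\ref{G1})--(\ref{H1}) with initial data $X_0$; uniqueness from \cite{c1} then forces $\La^*$ to coincide with the $\La_\ve$ of the $X_t$ evolution, and hence the full sequence converges. Uniform convergence of $w_{Y_t^k}$ on compacts follows, and one further integration in $x$ yields the claimed convergence of $h_{Y_t^k}$.

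The principal obstacle is the constructive step: enforcing simultaneously the Gaussian pdf tail, the vanishing at $0$, and strict log-concavity of $h$, while keeping local-uniform convergence of $h_{Y_0^k}$ to $h_{X_0}$. Log-concavity of $h$ is strictly weaker than log-concavity of $c$, so pdf-level modifications must either be realized as operations on $h$ itself (via Pr\'ekopa--Leindler) or use spliced-in blocks that are themselves strictly log-concave pdfs; this is why the three stages must be sequenced as above, with the mollification done before the tail splice and the multiplicative $0$-cutoff done last. A secondary point is that the stability step needs uniqueness for (\ref{G1})--(\ref{H1}) in a class wide enough to contain the candidate limit $\La^*$; this should follow from \cite{c1} together with the a priori moment bound implicit in (\ref{H1}).
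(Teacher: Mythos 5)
Your stability step misreads the setting of the lemma. In the paragraph preceding (\ref{O5}) the paper fixes $\La_\ve(\cdot)$ once and for all as a given positive continuous function and treats (\ref{G1}) as a \emph{linear} Dirichlet problem; the constraint (\ref{H1}) is not imposed on the $Y^k$ evolution, and indeed (\ref{O5}) imposes no first-moment normalization on $Y^k_0$. Consequently $Y^k_t$ evolves with the \emph{same} drift $1/\La_\ve(\cdot)$ as $X_t$, the difference $w_{Y^k_t}-w_{X_t}$ satisfies the fixed-coefficient PDE (\ref{BM7}) with homogeneous Neumann data at $x=0$, and the Hopf maximum principle gives the contraction $\sup_x|w_{Y^k_t}(x)-w_{X_t}(x)|\le\sup_x|w_{Y^k_0}(x)-w_{X_0}(x)|$ directly (this is (\ref{AU5})); only a tail estimate on $w_{Y^k_t}$ via (\ref{AV5}) is then needed to pass from $w$ to $h$. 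Your plan of constructing a separate $\La^{(k)}$ for each $k$, extracting a limit by Ascoli--Arzel\`a, and invoking uniqueness from \cite{c1} addresses a nonlinear coupling that the lemma has deliberately frozen out, and it is not even coherent under (\ref{O5}) since nothing constrains $\langle Y^k_0\rangle$.

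Your construction also has gaps. Convolving $c_{X_0}$ against a full-line Gaussian pushes mass into $y<0$, so the result is not the pdf of a non-negative random variable; the multiplicative cutoff $y/(y+1/k)$ restores the support at the level of $c$, but log-concavity of $h$ is \emph{not} preserved under multiplicative modification of $c$, Pr\'ekopa--Leindler does not apply to that step, and it is unclear that the reassembled $h$ remains log-concave with $\beta<1$ strictly after cutoff, tail splice and renormalization. The paper sidesteps all of this by constructing $\beta_{Y^k_0}$ directly as a running average of $\beta_{X_0}$ damped by the factor $1-1/k$, as in (\ref{AB5}): the strict bound $\beta_{Y^k_0}<1$ and the vanishing $c_{Y^k_0}(0)=0$ are then manifest from (\ref{AI7}), the Gaussian tail is attached via a matching condition on $E[Y^k_0-y\,|\,Y^k_0>y]$, and the convergence $h_{Y^k_0}\to h_{X_0}$ drops out of the explicit formulas (\ref{AG5})--(\ref{AK5}) rather than a dominated-convergence argument.
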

\begin{proof}
We first assume that $\|X_0\|_\infty<\infty$, and define the beta function for $Y_0^k$  in the interval $0\le x\le (1-2/k)\|X_0\|_\infty$ in terms of the beta function for $X_0$  as follows:
\begin{multline} \label{AB5}
\beta_{Y^k_0}(x) \ =  \ \left(1-\frac{1}{k}\right)\frac{k}{\|X_0\|_\infty}\int_0^x \beta_{X_0}(z) \ dz \quad {\rm for \ } 0\le x\le \frac{\|X_0\|_\infty}{k} \ , \\
\beta_{Y^k_0}(x) \ =  \ \left(1-\frac{1}{k}\right)\frac{k}{\|X_0\|_\infty}\int_{x-\|X_0\|_\infty/k}^x \beta_{X_0}(z) \ dz \quad {\rm for \ }  \frac{\|X_0\|_\infty}{k}\le x\le \left(1-\frac{2}{k}\right)\|X_0\|_\infty \ .
\end{multline}
It follows from (\ref{AI7}), (\ref{AB5}) that $c_{Y^k_0}(x)$ is continuous in the interval $0\le x\le (1-2/k)\|X_0\|_\infty$ and  $c_{Y^k_0}(0)=0$. To continue the definition of $\beta_{Y^k_0}(\cdot)$, we choose $L_k>0$ sufficiently large  so that the function $\beta_{L_k}$ of  Lemma 2.1 satisfies $\beta_L(0)\ge 1-1/k$ and $\beta_L(z)\le 1-1/2L(1+z/L)^2$ for $z\ge 0, \ L\ge L_k$.  We then define $\beta_{Y^k_0}(x)$ for $(1-2/k)\|X_0\|_\infty\le x\le (1-1/k)\|X_0\|_\infty$ by linear interpolation, taking the value $\beta_{Y^k_0}\left((1-2/k)\|X_0\|_\infty\right)$ at the left end of the interval and the value $\beta_{L_k}(0)$ at the right end.  Finally we extend $c_{Y^k_0}(x)$ to $x\ge (1-1/k)\|X_0\|_\infty$ by setting it equal to the Gaussian in (\ref{O5}) with $y_0=(1-1/k)\|X_0\|_\infty$, choosing $K$ so that $c_{Y^k_0}(\cdot)$ is continuous and  $a$ so that  $E[Y^k_0-x \ | \ Y^k_0>x]=E[X_0-x \ | \ X_0>x]$ when $x=(1-1/k)\|X_0\|_\infty$. The random variable $Y^k_0$ satisfies (\ref{O5}). In particular, since $h_{X_0}(\cdot)$ is log-concave it follows that $\beta_{Y^k_0}(x)\le 1-1/k$ for $x\le (1-2/k)\|X_0\|_\infty$.

To construct the pdf $c_{Y^k_0}(\cdot)$ from the function $\beta_{Y^k_0}(\cdot)$ defined in the previous paragraph we first observe from (\ref{AI7}) that
\be \label{AC5}
\frac{d}{dy} E[Y^k_0-y \ | \ Y^k_0>y] \ = \ \beta_{Y^k_0}(y)-1 \ , \quad E[Y^k_0-y^k \ | \ Y^k_0>y^k]=E[X_0-y^k \ | \ X_0>y^k] \ ,
\ee
where $y^k=(1-1/k)\|X_0\|_\infty$.  The function $v^k(y)=E[Y^k_0-y \ | \ Y^k_0>y]^{-1}, \ y\ge 0,$ is uniquely determined by (\ref{AC5}).  From (\ref{AG7}), (\ref{AH7}) we see that
\be \label{AD5}
h_{Y^k_0}(x) \ = \ A^k \exp\left[-\int_0^x v^k(y) \ dy\right]\ , \quad x\ge 0, 
\ee
for some constant $A^k$.  If we define the function $f_k(\cdot)$ by
\be \label{AE5}
f_k(x)  \ = \ \beta_{Y^k_0}(x)v^k(x)^2 \exp\left[-\int_0^x v^k(y) \ dy\right] \ , \quad x\ge 0,
\ee
then (\ref{AI7}) implies that $c_{Y^k_0}(x)=A^kf_k(x), \ x\ge 0$.  Using the normalization condition for the probability  measure $c_{Y^k_0}(\cdot)$, we conclude that the constant $K$ in (\ref{O5}) is given by the formula
\be \label{AF5}
K \ = \ f_k(y^k)\big/ \int_0^\infty f_k(x) \ dx \ .
\ee

We show that the functions $w_{Y^k_0}(\cdot)$ converge as $k\ra\infty$ to $w_{X_0}(\cdot)$, uniformly in $[0,\infty)$. To do this we use the identity
\be \label{AG5}
\int_x^\infty f_k(y) \ dy \ = \  v^k(x) \exp\left[-\int_0^x v^k(y) \ dy\right] \ , \quad x\ge 0,
\ee
obtained from (\ref{AC5}), (\ref{AE5}).  From (\ref{AC5}) we have that
\be \label{AH5}
 E[Y^k_0-y \ | \ Y^k_0>y]  \ = \ E[X_0-y^k \ | \ X_0>y^k]+\int_y^{y^k} \left[1-\beta_{Y^k_0}(y')\right] \ dy' \ , \quad 0\le y\le y^k \ .
\ee
 It follows from (\ref{AB5}), (\ref{AH5}) that
\be \label{AI5}
\lim_{k\ra\infty}  E[Y^k_0-y \ | \ Y^k_0>y]  \ = \ \int_y^{\|X_0\|_\infty} \left[1-\beta_{X_0}(y')\right] \ dy' \ , \quad 0\le y<\|X_0\|_\infty \ ,
 \ee
and the convergence is uniform in any interval $\{y \ : \ 0\le y<\|X_0\|_\infty(1-\del)\}$ for which $\del>0$. We conclude from (\ref{AG5}), (\ref{AI5}) upon setting $v^\infty(x)=E[X_0-x \ | \ X>x]^{-1}$ that
\be \label{AJ5}
\lim_{k\ra\infty}\int_x^\infty f_k(y) \ dy \ = \  v^\infty(x) \exp\left[-\int_0^x v^\infty(y) \ dy\right] \ , \quad 0\le x<\|X_0\|_\infty \ ,
\ee
and the convergence is uniform in any interval $\{x \ : \ 0\le x<\|X_0\|_\infty(1-\del)\}$ for which $\del>0$. Taking $x=0$ in (\ref{AJ5}) we have that $\lim_{k\ra\infty} A_k=E[X_0]$. Hence (\ref{AJ5}) implies that
\be \label{AK5}
\lim_{k\ra\infty}\int_x^\infty c_{Y^k_0}(y) \ dy  \ = \ \int_x^\infty c_{X_0}(y) \ dy \ , \quad  0\le x<\|X_0\|_\infty \ ,
\ee
and the convergence is uniform in any interval $\{x \ : \ 0\le x<\|X_0\|_\infty(1-\del)\}$ for which $\del>0$. In view of the integrability of $c_{X_0}(\cdot)$, we conclude that the convergence (\ref{AK5}) is uniform for $0\le x<\infty$. 

We can easily estimate $w_{Y^k_0}(x)$ for $x\ge\|X_0\|_\infty$ since the pdf of $Y^k_0$ is Gaussian when $x\ge\|X_0\|_\infty$.  To do this we define a function $g:[0,\infty)\ra\R$ by 
\be \label{AL5}
g(z) \ = \  E[Z \ |  \ Z>z]^{-1}  \ = \ e^{z^2/2}\int_z^\infty e^{-z'^2/2}  \ dz' \ = \ \int_0^\infty e^{-z'z-z'^2/2}  \ dz' \ ,
\ee
where $Z$ is the standard normal variable.  Evidently we have from (\ref{AL5}) that
\be \label{AM5}
g'(z) \ = \ -1+ z\int_0^\infty e^{-z'z-z'^2/2}  \ dz' \ , \quad g(0) \ = \ \sqrt{\pi/2} \ .
\ee
We conclude from (\ref{AL5}), (\ref{AM5}) that $g(\cdot)$ is a decreasing function and $\lim_{z\ra\infty} g(z)=0$.  We can estimate $g(z)$ for large $z$ from the final integral on the RHS of (\ref{AL5}) to obtain the inequality
\be \label{AO5}
0 \ < \  g(z) \ < \frac{1}{z}\left[1-\frac{1}{z^2}+\frac{3}{z^4} \right] \ ,
\ee
whence it follows that
\be \label{AP5}
E[Z \ |  \ Z>z] \ > \ z\left[1+\frac{1}{2z^2}\right] \quad {\rm for \ } z> \sqrt{6}.
\ee
Since the final integral on the RHS of (\ref{AL5}) is strictly less than $1/z$ for all $z>0$, we conclude from (\ref{AP5}) that there exists $\ga_0>0$ and
\be \label{AQ5}
E[Z \ |  \ Z>z] -z \ \ \ge \ \min\{\ga_0, 1/2z\} \ , \quad z\ge 0.
\ee
The random variable $Y$ of (\ref{O5}) has for $y\ge y_0$ the pdf of a normal variable with mean  $y_0-L/a$ and variance $L/a^2$.  We can estimate the value of $a$ when $Y=Y^k_0$  by using the equality $E[Y^k_0-y^k \ | \ Y^k_0>y^k]=E[X_0-y^k \ | \ X_0>y^k]\le \|X_0\|_\infty/k$. Observe now  that
\be \label{AR5}
E[Y^k_0-y^k \ | \ Y^k_0>y^k] \ = \ \frac{\sqrt{L_k}}{a}\left(E[Z \ |  \ Z>\sqrt{L_k}]-\sqrt{L_k}\right) \ .
\ee
Hence using the upper bound on the LHS of (\ref{AR5}),  we obtain from (\ref{AQ5}), (\ref{AR5}) a lower bound
\be \label{AS5}
a \ \ge \  \frac{k\sqrt{L_k}}{\|X_0\|_\infty} \min\{\ga_0,1/2\sqrt{L_k}\} \ 
\ee 
for $a$.  
Since $\lim_{k\ra\infty} L_k=\infty$, it follows from (\ref{AS5}) that  $\lim_{k\ra\infty} a_k=\infty,$ where $a_k$ is the value of $a$ in (\ref{O5})  when $Y=Y^k_0$. 

We have from (\ref{O5}) that
\be \label{AT5}
w_{Y^k_0}(x) \ = \ \int_{x-y_k}^\infty K_k\exp[-a_ky-(a_ky)^2/2L_k] \ dy \quad {\rm for \ } x\ge y_k \ .
\ee
Furthermore, from (\ref{AK5}) it follows that for any $\eta>0$ there exists an integer $k_\eta$ such that $w_{Y^k_0}(y_k)<\eta$ for $k\ge k_\eta$.  This implies a bound on $K_k$ in (\ref{AT5}) of the form $K_k\le \eta a_k, \ k\ge k_\eta$.  Hence from (\ref{AT5}) it follows that $ h_{Y^k_0}(y_k)\le \eta/ a_k$ and hence $\lim_{k\ra\infty} h_{Y^k_0}(y_k)=0$.   We conclude from this and (\ref{AK5}) that the functions $x\ra h_{Y^k_0}(x), \ k=1,2,..,$ converge  as $k\ra\infty$, uniformly in any interval $\{x \ : \ 0\le x\le x_0 \}$, to the function  $x\ra h_{X_0}(x)$. 

To prove that $h_{Y^k_t}(\cdot), \ k=1,2,..,$ converges to $h_{X_t}(\cdot)$ for $t>0$ we use the fact that the function $w_\ve$ defined by (\ref{AX7}) is a solution to the PDE (\ref{BM7}).  Furthermore, the Dirichlet boundary condition $c_\ve(0,t)=0$ for (\ref{G1})  becomes a Neumann boundary condition $\pa w_\ve(0,t)/\pa x=0$ for (\ref{BM7}). By the Hopf maximum principle \cite{pw} we then have that
\be \label{AU5}
\sup \left|w_{Y^k_t}(\cdot)-w_{X_t}(\cdot)\right| \ \le \ \sup \left|w_{Y^k_0}(\cdot)-w_{X_0}(\cdot)\right| \ , \quad t\ge 0. 
\ee
From (\ref{AK5}), (\ref{AU5}) we see that $w_{Y^k_t}(\cdot), \ k=1,2,..,$ converges to $w_{X_t}(\cdot)$ for any $t\ge 0$.  This implies convergence of  $h_{Y^k_t}(\cdot), \ k=1,2,..,$ to $h_{X_t}(\cdot)$ provided we can obtain a suitable uniform bound on  $w_{Y^k_t}(x), \ k=1,2,..,$ for large $x$. 
To carry this out we  use the representation (\ref{H4}) for $w_\ve$.  Thus we have that
\begin{multline} \label{AV5}
w_{Y^k_t}(x) \ \le \  \int_x^\infty dx'\int_0^\infty G_\ve(x',y,0,t) \ c_{Y^k_0}(y) \ dy \ = \\
 \int_x^\infty dx'  \ G_\ve(x',0,0,t )w_{Y^k_0}(0)+m_{1,1/\La_\ve}(t)\int_0^\infty G_\ve(x,y,0,t) \ w_{Y^k_0}(y) \ dy  \ ,
\end{multline}
where $G_\ve$ is given by (\ref{K2}) with $A(\cdot)=1/\La_\ve(\cdot)$.  Evidently (\ref{AK5}) and (\ref{AV5}) yield a uniform upper bound on $w_{Y^k_t}(x), \ k=1,2,..,$ for large $x$, which decays rapidly as $x\ra\infty$ to $0$.  

We have therefore proven the result for random variables $X_0$ which satisfy $\|X_0\|_\infty<\infty$.  In the case when $\|X_0\|_\infty=\infty$ we proceed similarly,  approximating $X_0$ with variables $Y^k_0$ satisfying (\ref{O5}) by averaging the beta function of $X_0$ over intervals of length $1/k$ as in (\ref{AB5}) for  $0\le x\le k, \ k=1,2,..$. 
\end{proof}
\begin{lem}
Assume that the function $h_{X_0}(\cdot)$ for the initial data random variable $X_0$ of (\ref{G1}) satisfies (\ref{O5}) and $T>0$. Then there exists $x_T>0$ such that $\beta_{X_t}(x)<1$ for all $x\ge x_T, \ 0\le t\le T$.  
\end{lem}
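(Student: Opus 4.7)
The plan centers on rewriting the assertion $\beta_{X_t}(x)<1$ as positivity of the auxiliary function $u_\ve(x,t)=\pa v_\ve(x,t)/\pa x$, since by (\ref{N5}) one has $u_\ve = v_\ve^2[1-\beta_{X_t}]$, so $\beta_{X_t}<1 \iff u_\ve>0$. Here $v_\ve = \pa q_\ve/\pa x$ with $q_\ve = -\log h_\ve$, and $u_\ve$ satisfies the PDE (\ref{M5}). Under (\ref{O5}) the initial beta function satisfies $\beta_{X_0}(y)<1$ for every $y\ge 0$, so $u_\ve(\cdot,0)>0$ on $[0,\infty)$; in the Gaussian region $y\ge y_0$, the affine invariance (\ref{AJ7}) of the beta function combined with Lemma 2.1 yields the quantitative lower bound
\be \label{plan1}
u_\ve(y,0) \;\ge\; \frac{v_\ve(y,0)^2}{2L\,[1+a(y-y_0)/L]^2}\,,\quad y\ge y_0,
\ee
once $L$ is taken sufficiently large (part of the freedom built into (\ref{O5})).

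To propagate positivity to $t\in(0,T]$ for large $x$, I would invoke the semi-classical approximation $G_{\ve,D}\simeq G_\ve$ in the representation (\ref{AZ7}) for $h_\ve(x,t)$; this replacement is rigorously controlled for $x\gg\ve$ by Lemmas 5.1 and 5.2. The strict convexity of $q_\ve(\cdot,0)$ (equivalent to $\beta_{X_0}<1$) guarantees that the optimization in (\ref{BA7}) has a unique minimizer $y_{\min}(x,t)$ obeying (\ref{BC7}), and the propagation identity (\ref{BD7}) then takes the form
\be \label{plan2}
u_\ve(x,t) \;=\; \frac{m_{1,1/\La_\ve}(t)^{-2}\,u_\ve(y_{\min}(x,t),0)}{1+\ve\,\sig_{1/\La_\ve}^2(t)\,m_{1,1/\La_\ve}(t)^{-2}\,u_\ve(y_{\min}(x,t),0)},
\ee
whose right-hand side is strictly positive whenever $u_\ve(y_{\min}(x,t),0)>0$. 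For $x\ge x_T$ with $x_T$ chosen large enough---depending on $T$, on the parameters $a,L,y_0$ of (\ref{O5}), and on the bounds for $\La_\ve$, $m_{1,1/\La_\ve}$, $\sig_{1/\La_\ve}^2$ on $[0,T]$ furnished by Lemma 6.1---the minimizer $y_{\min}(x,t)$ remains $\ge y_0$ uniformly in $t\in[0,T]$, so (\ref{plan1}) feeds into (\ref{plan2}) and gives a strictly positive lower bound on $u_\ve(x,t)$.

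The main obstacle is upgrading the heuristic identity $G_{\ve,D}\simeq G_\ve$ into rigorous control of $\pa^2 q_\ve(x,t)/\pa x^2$, since (\ref{plan2}) concerns this second derivative rather than $h_\ve$ itself. Three error sources must be absorbed into the choice of $x_T$: (i) the ratio $G_{\ve,D}/G_\ve-1$, for which Lemmas 5.1--5.2 provide quantitative vanishing as $x/\ve\to\infty$; (ii) the contribution of the non-Gaussian portion $y<y_0$ of $h_\ve(y,0)$, suppressed by the Gaussian factor $\exp[-\{x+m_{2,1/\La_\ve}(t)-m_{1,1/\La_\ve}(t)y_0\}^2/(2\ve\sig_{1/\La_\ve}^2(t))]$ once $x$ exceeds $m_{1,1/\La_\ve}(t)y_0-m_{2,1/\La_\ve}(t)$ by a margin of order $\sqrt{\ve}$; and (iii) the error in the second-order Laplace expansion that turns (\ref{AZ7}) into (\ref{BA7}), which reduces to controlling $\pa^3 q_\ve(\cdot,0)/\pa y^3$---a quantity explicitly bounded in the Gaussian region by the form of $c_Y$ in (\ref{O5}). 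Once these three errors are packaged into the choice of $x_T$, one obtains $u_\ve(x,t)>0$, i.e.\ $\beta_{X_t}(x)<1$, uniformly for $x\ge x_T$ and $t\in[0,T]$.
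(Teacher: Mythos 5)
Your proposal takes a genuinely different route from the paper's: you propagate the strict positivity of $u_\ve=\pa v_\ve/\pa x = \pa^2 q_\ve/\pa x^2$ via the semi-classical Hopf--Lax formula (\ref{BD7}), whereas the paper works directly at the level of the density $c_{X_t}$. There is however a real gap. The identity (\ref{plan2}) (i.e.\ (\ref{BD7})) is \emph{exact} only for the inviscid Hamilton--Jacobi problem coming from the variational formula (\ref{BA7}); for the diffusive CP model the function $q_\ve$ satisfies the viscous equation (\ref{BO7}), which carries an extra term $\tfrac{\ve}{2}\pa^2 q_\ve/\pa x^2$, so (\ref{plan2}) holds only up to a Laplace error. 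You acknowledge needing to control ``the error in the second-order Laplace expansion,'' but supply no argument for this. The difficulty is quantitative: from (\ref{plan1}), the positive quantity $u_\ve(x,t)$ you need to preserve is only of size $O(1/x^2)$, so you would have to show the Laplace error on $\pa^2 q_\ve(x,t)/\pa x^2$ is $o(1/x^2)$ uniformly in $t\in[0,T]$. Lemmas 5.1--5.2 do not directly deliver this: they control the ratio $G_{\ve,D}/G_\ve$ (i.e.\ a multiplicative correction on the density), not second logarithmic derivatives of $h_\ve$, and passing from a multiplicative bound on $h_\ve$ to a bound on $\pa^2\log h_\ve/\pa x^2$ loses information.

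The paper's proof avoids the second-derivative issue by staying at the density level. Using the representation $c_{X_t}(x)=\int_0^\infty G_{\ve,D}(x,y,0,t)c_{X_0}(y)\,dy$ together with $G_{\ve,D}\le G_\ve$ for the upper bound (\ref{BB5}) and the lower bound (\ref{AW5}) on $G_{\ve,D}/G_\ve$ for (\ref{BF5}), it sandwiches $c_{X_t}(x)$ between $\left[1\pm\exp(-x^2/M_T)\right]$ times a single fixed Gaussian density $c_Y(x)$ for $x\ge x_T$, after absorbing the non-Gaussian $y<y_0$ contribution (which is exponentially subdominant by (\ref{AZ5})--(\ref{BA5})). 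Since $\beta_{X_t}$ in (\ref{AI7}) is a ratio of $c$, $w$, and $h$, all of which inherit the same multiplicative sandwich by integration, one obtains (\ref{BG5}): $\beta_{X_t}(x)\le \left[1+\exp(-x^2/M_T)\right]^2\left[1-\exp(-x^2/M_T)\right]^{-2}\beta_Y(x)$. The Gaussian bound (\ref{BI5}) from Lemma 2.1, $\beta_Y(x)\le 1-\sigma^2/2(x-m)^2$, then has a polynomial gap that dominates the exponentially small multiplicative error once $x_T$ is large enough. This density-level argument is more robust than yours precisely because it never needs to differentiate through the approximation; to make your route rigorous you would have to furnish the missing Laplace-error estimate, which is essentially equivalent to (and less transparent than) what the paper does.
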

\begin{proof}
We first obtain a lower bound on  the ratio of the half line Dirichlet Green's function defined by (\ref{L2}) to the full line Green's function (\ref{K2}).  Letting $A_\infty$ be given by (\ref{A2}),  we show that  for any $\ga>0$, there are positive constants $C_1,C_2$ depending only on $\ga, \ A_\infty T$ such that 
\be \label{AW5}
\frac{G_{\ve,D}(x,y,0,T)}{G_{\ve}(x,y,0,T)} \ \ge \ 1-\exp\left[-\frac{x^2}{C_1\ve T}\right] \quad {\rm for \ } y\ge \ga x, \ x\ge C_2[T+\sqrt{\ve T}].
\ee
We see from (\ref{AA2}), (\ref{AM2})  that in order to establish (\ref{AW5}) it is sufficient to show that there are constants $C_1,C_2$ depending only on $A_\infty T$ such that
\be \label{AX5}
P\left(\sup_{0\le s\le T}\left|\int_0^T k(s,s') \ dB(s')\right|> z\right) \ \le \ \exp\left[-\frac{z^2}{C_1 T}\right] \quad {\rm for \ } z\ge C_2\sqrt{T}.
\ee
The inequality (\ref{AX5}) follows from Doob's Martingale inequality as in the proof of Proposition 5.1.

We have now that
\begin{multline} \label{AY5}
c_{X_t}(x) \ = \ \int_0^\infty G_{\ve,D}(x,y,0,t)c_{X_0}(y) \ dy \ \le \\
\int_0^{y_0} G_{\ve}(x,y,0,t)c_{X_0}(y) \ dy+\int_{y_0}^\infty G_{\ve}(x,y,0,t)c_{X_0}(y) \ dy \ ,
\end{multline} 
where $A(\cdot)=1/\La_\ve(\cdot)$ in (\ref{K2}) and $y_0$ is given in (\ref{O5}).  We can bound the first integral on the RHS of (\ref{AY5}) using integration by parts to obtain the inequality
\be \label{AZ5}
\int_0^{y_0} G_{\ve}(x,y,0,t)c_{X_0}(y) \ dy \ \le \ G_{\ve}(x,0,0,t)w_{X_0}(0) -m_{1,1/\La_\ve}(t)\frac{\pa}{\pa x} \int_0^{y_0} G_{\ve}(x,y,0,t)w_{X_0}(y) \ dy \ .
\ee
Since $c_{X_0}(y)$ is Gaussian for $y\ge y_0$ as given in (\ref{O5}),   the second integral on the RHS of (\ref{AY5}) is bounded by  a Gaussian,
\begin{multline} \label{BA5}
\int_{y_0}^\infty G_{\ve}(x,y,0,t)c_{X_0}(y) \ dy \ \le \ K\int_{-\infty}^\infty G_{\ve}(x,y,0,t)\exp[-a(y-y_0)-\{a(y-y_0)\}^2/2L] \ dy \\
 = \ K\exp[L/2]\left( \frac{L}{a^2\ve \sig_{1/\La_\ve}^2(t)+Lm_{1,1/\La_\ve}(t)^2}\right)^{1/2} \ \times \\
 \exp\left[-\frac{\left(x+m_{2,1/\La_\ve}(t)-m_{1,1/\La_\ve}(t)y_0+m_{1,1/\La_\ve}(t)L/a\right)^2}{2\left\{\ve \sig_{1/\La_\ve}^2(t)+Lm_{1,1/\La_\ve}(t)^2/a^2\right\} }\right] \ .
\end{multline}
We conclude from (\ref{K2}), (\ref{AY5})-(\ref{BA5}) that there exist positive constants  $x_T,M_T$  such that  for $0<t\le T, \ x\ge x_T$,
\be \label{BB5}
c_{X_t}(x) \ \le \ \left[1+\exp[-x^2/M_T\right]K_t\exp\left[-\frac{\left(x+ \bar{x}_t\right)^2}{2\left\{\ve \sig_{1/\La_\ve}^2(t)+Lm_{1,1/\La_\ve}(t)^2/a^2\right\} }\right] \ , 
\ee
where $K_t,\bar{x}_t$ are given by the formulas
\begin{eqnarray} \label{BC5}
K_t \ &=& \ K\exp[L/2]\left( \frac{L}{a^2\ve \sig_{1/\La_\ve}^2(t)+Lm_{1,1/\La_\ve}(t)^2}\right)^{1/2} \ , \\
\bar{x}_t \ &=& \ m_{2,1/\La_\ve}(t)-m_{1,1/\La_\ve}(t)y_0+m_{1,1/\La_\ve}(t)L/a \ . \nonumber
\end{eqnarray}

We can use (\ref{AW5}) to obtain a lower bound on $c_{X_t}(\cdot)$.  Thus we have that
\be \label{BD5}
c_{X_t}(x) \ \ge \ \left\{1-\exp\left[-\frac{x^2}{C_1\ve T}\right]\right\} \int_{\ga x}^\infty G_\ve(x,y,0,t)c_{X_0}(y) \ dy \ . 
\ee
Assuming $x$ sufficiently large so that $\ga x\ge y_0$ then from (\ref{BA5}) we see that
\begin{multline} \label{BE5}
 \int_{\ga x}^\infty G_\ve(x,y,0,t)c_{X_0}(y) \ dy \ = \ K_t\exp\left[-\frac{\left(x+ \bar{x}_t\right)^2}{2\left\{\ve \sig_{1/\La_\ve}^2(t)+Lm_{1,1/\La_\ve}(t)^2/a^2\right\} }\right] \\
 - \ K\int_{-\infty}^{\ga x} G_{\ve}(x,y,0,t)\exp[-a(y-y_0)-\{a(y-y_0)\}^2/2L] \ dy \ .
\end{multline}
If $\ga>0$ is sufficiently small then the the second term on the RHS of (\ref{BE5}) is much smaller  than the first term. Hence we conclude that 
there exist positive constants  $x_T,M_T$  such that  for $0<t\le T, \ x\ge x_T$,
\be \label{BF5}
c_{X_t}(x) \ \ge \ \left[1-\exp[-x^2/M_T\right]K_t\exp\left[-\frac{\left(x+ \bar{x}_t\right)^2}{2\left\{\ve \sig_{1/\La_\ve}^2(t)+Lm_{1,1/\La_\ve}(t)^2/a^2\right\} }\right] \ . 
 \ee
 
 We can use (\ref{BB5}), (\ref{BF5}) to obtain an upper bound for $\beta_{X_t}(x)$ when $0<t\le T, \ x\ge x_T$. In fact from the formula (\ref{AI7})
 we immediately conclude that
 \be \label{BG5}
 \beta_{X_t}(x) \ \le \ \left[1+\exp[-x^2/M_T\right]^2\left[1-\exp[-x^2/M_T\right]^{-2} \beta_Y(x) \ , \quad {\rm for \ } x\ge x_T, 
 \ee
 where $\beta_Y(\cdot)$ is the beta function for a Gaussian variable $Y$ with mean $m$ and variance $\sig^2$ given by
 \be \label{BH5}
 m \ = \ -\bar{x}_t \ , \quad \sig^2 \ = \ \ve \sig_{1/\La_\ve}^2(t)+Lm_{1,1/\La_\ve}(t)^2/a^2 \ .
 \ee
 Now from  Lemma 2.1 we see that if $Y$ is Gaussian with mean $m$ and variance $\sig^2$, then there exists a universal constant $C$ such that
 \be \label{BI5}
 \beta_Y(x) \ \le \ 1-\frac{\sig^2}{2(x-m)^2} \quad {\rm for \ } x\ge C\sig+m \ .
 \ee
 The result follows from (\ref{BG5}), (\ref{BI5})  upon choosing $x_T$ sufficiently large.
\end{proof}
\begin{rem}
The inequality (\ref{AW5}) was easy to obtain from the explicit representation for the stochastic process $Y_\ve(s), \ 0\le s\le T,$ of (\ref{H2}) conditioned on $Y_\ve(0)=y, \ Y_\ve(T)=x$, when the drift $b(\cdot,\cdot)$ is linear.  It is much more difficult to obtain estimates on probabilities for the conditioned process in the case of non-linear $b(\cdot,\cdot)$. Theorem 1.2 of \cite{cg}  proves a result for the cdf of $Y_\ve(t), \ 0<t<T,$ in the case of $b(\cdot,\cdot)$ satisfying the uniform Lipschitz condition (\ref{A2}),  which is analagous to (\ref{AW5}). 
\end{rem}
\begin{proof}[Proof of log-concavity of the function $h_\ve$] We first assume that the initial condition random variable $X_0$ for (\ref{G1}) satisfies (\ref{O5}).  Then by standard regularity theorems \cite{fried}  for solutions to (\ref{G1}), the function $u_\ve$ of (\ref{M5}) is continuous 
on the closed set $\{(x,t) \ : \ x\ge 0, t\ge 0\}$.  Furthermore Lemma 7.5 implies that for any $T>0$ there exists $x_T>0$ such that $u_\ve(x,t)>0$ for $0\le t\le T, \ x\ge x_T$.  In addition, (\ref{N5}) implies that $u_\ve(0,t)>0,  \ 0\le t\le T,$ and (\ref{O5}) that $u_\ve(x,0)>0, \ x\ge 0$. Since $u_\ve$ is a classical solution to (\ref{M5}), the maximum principle \cite{pw} implies that $u_\ve(x,t)>0$ for $0\le t\le T, \ 0\le x\le x_T$.
We have therefore proven that the function $h_\ve(\cdot,t)$ is log-concave for $0\le t\le T$ when the initial data random variable $X_0$ satisfies (\ref{O5}). The log-concavity of $h_\ve(\cdot,t), \ t>0,$ for general log-concave initial data random variable $X_0$ now follows from Lemma 7.4.
\end{proof}
\begin{rem}
The main difficulty in implementing Korevaar's argument is to show that the solution of the PDE is log-concave on the boundary of the region. We accomplished this here by taking advantage of the fact that the  full line Green's function is  Gaussian when the drift $b(\cdot,\cdot)$ for (\ref{B2}) is linear.  In the case of non-linear $b(\cdot,\cdot)$ it is not possible to argue in this way. An alternative approach is to use Korevaar's observation \cite{kor} that a Dirichlet boundary condition implies log-concavity close to the boundary on account of the Hopf maximum principle \cite{pw}.  Some log-concavity theorems   for non-linear $b(\cdot,\cdot)$  are proved in the appendix of \cite{cg} using this method. 
\end{rem}

\end{document}